\newtheorem{theorem}{Theorem}[section]
\newtheorem{conj}[theorem]{Conjecture}
\newtheorem{cor}[theorem]{Corollary}
\newtheorem{lem}[theorem]{Lemma}
\theoremstyle{definition}
\newtheorem{beisp}[theorem]{Example}
\newtheorem{definit}[theorem]{Definition}
\newtheorem*{rem*}{Remark}
\newfont{\xlarge}{cmbx10 scaled \magstep2}
\DeclareMathOperator{\Inn}{Inn}
\DeclareMathOperator{\aut}{Aut}
\DeclareMathOperator{\So}{SO}
\DeclareMathOperator{\Gl}{GL}
\DeclareMathOperator{\Sl}{SL}
\DeclareMathOperator{\Sp}{SP}
\DeclareMathOperator{\oo}{O}
\DeclareMathOperator{\Span}{Span}
\DeclareMathOperator{\M}{M}
\DeclareMathOperator{\Id}{Id}
\renewcommand{\epsilon}{\varepsilon}
\renewcommand{\phi}{\varphi}
\renewcommand{\rho}{\varrho}
\renewcommand{\theta}{\vartheta}
\renewcommand{\leq}{\leqslant}
\renewcommand{\le}{\leqslant}
\renewcommand{\ge}{\geqslant}
\newfont{\myfont}{msam10 scaled 2000}
\title[Isomorphy Classes of Involutions of $\Sp(2n, k)$, $n>2$]{\boldmath
Isomorphy Classes of Involutions of $\Sp(2n, k)$, $n>2$}
\author{Robert W. Benim}
\address {Department of Mathematics and Computer Science\\
Pacific University\\
Forest Grove, OR, 97116}
\email{rbenim@gmail.com}
\author{Aloysius G. Helminck}
\thanks{Third author is partially supported by N.S.F. Grant DMS-0532140}
\address{Department of Mathematics\\
North Carolina State University\\
Raleigh, N. C., 27695} \email{loek@math.ncsu.edu}
\author{Farrah Jackson}
\address {Department of Mathematics and Computer Science\\
Elizabeth City State University\\
Elizabeth City, N. C., 27909}
\email{fjchandler@mail.ecsu.edu}
\begin{document}
\maketitle

\begin{abstract}
A first characterization of the isomorphism classes of $k$-involutions for any reductive algebraic groups defined over a perfect field was given in \cite{Helm2000} using $3$ invariants.  In \cite{HWD04} a classification of all involutions on $\Sl(n,k)$ for $k$ algebraically closed, the real numbers, the $p$-adic numbers or a finite field was provided. In this paper, we build on these results to develop a detailed characterization of the involutions of $\Sp(2n,k)$. We use these results to classify the isomorphy classes of involutions of $\Sp(2n, k)$ where $k$ is any field not of characteristic 2.

\end{abstract}

\section{Introduction}
Let $G$ be a connected reductive algebraic group defined over a
field $k$ of characteristic not $2$, $\theta$ an involution of $G$
defined over $k$, $H$ a $k$-open subgroup of the fixed point group
of $\theta$ and $G_k$ (resp. $H_k$) the set of $k$-rational points
of $G$ (resp. $H$). The variety $G_k/H_k$ is called a symmetric
$k$-variety. For $k=\mathbb{R}$ these symmetric
$k$-varieties are also called real reductive symmetric spaces. 
These varieties occur in many problems in
representation theory, geometry and singularity theory. 
To study
these symmetric $k$-varieties one needs first a classification of
the related $k$-involutions. A characterization of the isomorphism
classes of the $k$-involutions was given in \cite{Helm2000}
essentially using the following 3 invariants:
\begin{enumerate}
\item classification of admissible $(\Gamma, \theta)$-indices.
\item classification of the $G_k$-isomorphism classes of
$k$-involutions of
 the $k$-an\-iso\-tro\-pic kernel of $G$.
\item classification of the $G_k$-isomorphism classes of $k$-inner
elements of $G$.
\end{enumerate}
For more details, see \cite{Helm2000}. The admissible $(\Gamma,
\theta)$-indices determine most of the fine structure of the
symmetric $k$-varieties and a classification of these was included
in \cite{Helm2000} as well. For $k$ algebraically closed or $k$ the
real numbers the full classification can be found in \cite{Helm88}.
For other fields a full classification of the remaining two invariants is
still lacking. In particular the case of symmetric $k$-varieties
over the $p$-adic numbers is of interest. We note that
the above characterization was only proven for $k$ a perfect field.

In \cite{HWD04} a full characterization of the isomorphism
classes of $k$-involutions was given in the case that $G=\Sl(n, k)$  which 
does not depend on any of the results in
\cite{Helm2000}. It was also shown how one may construct an outer-involution from a given non-degenerate symmetric or skew-symmetric bilinear form $\beta$ of $k^n$.  Using this characterization the
possible isomorphism classes for $k$ algebraically
closed, the real numbers, the $p$-adic numbers and finite
fields were classified.

In this paper we build upon the results of \cite{HWD04} to give a characterization 
of involutions of $\Sp(2n, k)$, the symplectic group.

We first show that if an automorphism $\theta= \Inn_A$ where $A \in \Gl (2n,\overline{k})$ leaves $\Sp(2n,k)$ invariant, then 
we can assume $A$ in $\Sp(n,k[\sqrt{\alpha}])$ where $k[\sqrt{\alpha}]$ is a quadratic extension of $k$. 
Thus, to classify the involutions of $\Sp(2n,k)$ it suffices to determine which $A \in \Sp(2n,k[\sqrt{\alpha}])$ induce involutions of $\Sp(2n,k)$, and to then determine the isomorphy classes of these involutions over $\Sp(2n,k)$. Using these results, we give a full classification of involutions of $\Sp(2n,k)$ for $k$ algebraically closed, the real numbers, or a finite field.

\section{Preliminaries}
Our basic reference for
reductive groups will be the papers of Borel and Tits
\cite{Borel-Tits65}, \cite{Borel-Tits72} and also the books of
Borel \cite{Borel91},  Humphreys \cite{Humph75} and Springer
\cite{Spring81}. We shall follow their notations and terminology.
All algebraic groups and algebraic varieties are  taken over an
arbitrary field $k$ (of characteristic $\neq 2$) and all algebraic
groups considered are linear algebraic groups.

Our main reference for results regarding involutions of $\Sl(n,k)$ will be \cite{HWD04}.  Let
$k$ be a field of characteristic not $2$, $\bar k$ the algebraic closure of $k$,
$$\M(n,k)=\{ n\times n \text{-matrices with
entries in $k$} \}, $$
$$\Gl(n,k)= \{ A\in \M(n,k)\mid \det
(A)\neq 0\}$$  and
$$\Sl(n,k)= \{ A\in \M(n,k)\mid \det
(A)=1\}. $$  Let $k^*$ denote the product group of all the nonzero field
elements, $(k^*)^2=\{a^2\mid a\in k^*\}$ and $I_n \in \M(n,k)$
denote the identity matrix. We will sometimes use $I$ instead of $I_n$ when the dimension of the identity matrix is clear.

We recall some important definitions and theorems from \cite{HWD04}.
\begin{definit}
\label{isoinv} Let $G$ be an algebraic group defined over a field $k$, and let $G_k$ be the set of $k$-rational points. Let ${\aut(G_k)}$ denote the set of all automorphisms
of $G_k$. For $A\in \Gl(n,k)$ let $\Inn_{A}$ denote the inner
automorphism defined by $\Inn_{A}(X)=A^{-1}XA$ for all ${X\in
\Gl(n,k)}$. Let ${\Inn_k(G_k)}=\{\Inn_A\mid A\in{G_k}\}$ denote the set
of all \textit{inner} automorphisms of $G_k$ and let ${\Inn(G_k)}$
denote the set of automorphisms $\Inn_{A}$ of $G_k$ with $A\in
G$ such that $\Inn_{A}(G_k)=G_k$. If $\Inn_A$ is order 2, that is $\Inn_A^2$ is the identity but $\Inn_A$ is not, then we call $\Inn_A$ an \textit{inner involution} of $G_k$. 
We say that $\theta$ and $\tau$ in ${\aut(G_k)}$ are
\textit{$\Inn(G_k)$-isomorphic} if there is a $\phi$ in ${\Inn(G_k)}$
such that $\tau=\phi ^{-1}\theta\phi$. Equivalently,  we say that $\tau$ and $\theta$ are in the same \textit{isomorphy class}.

\end{definit}

In \cite{HWD04}, the isomorphy classes of the inner-involutions of $\Sl(n,k)$ were classified, and they are as follows:

\begin{theorem} \label{sltheorem1}
Suppose the involution $\theta \in \aut(\Sl(n,k))$ is of inner type. Then
up to isomorphism $\theta$ is one of the following:
\begin{enumerate}
\item \label{sltheorem1.1} $\Inn_Y|_G$, where $Y = I_{n-i,i} \in
\Gl(n,k)$ where $i \in \left\{1, 2, \dots, \lfloor  \frac
{n}{2}\rfloor \right\}$ where $$I_{n-i,i} =  \left(\begin{array}{cc}I_{n-i} & 0 \\0 & I_i\end{array}\right)$$. 

\item \label{sltheorem1.2} $\Inn_Y|_G$,
where
$Y =  L_{\frac n 2, x} \in \Gl(n,k)$ where $x \in k^*\slash
k^{*2}$, $x \not\equiv 1\mod k^{*2}$ and
$$L\sb{n,x}=\begin{pmatrix} 0 & 1 & \hdots & 0 & 0 \\
x & 0 & \hdots & 0 & 0
\\ \vdots & \vdots & \ddots &
\vdots & \vdots \\ 0 & 0 & \hdots & 0 & 1\\ 0 & 0 & \hdots & x & 0
\end{pmatrix} .$$
 \end{enumerate}
Note that $(ii)$ can only occur when $n$ is even.
\end{theorem}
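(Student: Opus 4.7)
The plan is to translate the involution condition on $\theta = \Inn_A$ into a matrix condition on $A$, then classify such matrices up to the equivalence induced by the $\Inn(\Sl(n,k))$-action. First I would observe that $\Inn_A^2 = \Inn_{A^2}$ is trivial on $\Sl(n,k)$ iff $A^2$ centralizes $\Sl(n,k)$; using Zariski density of $\Sl(n,k)$ in $\Sl(n,\overline{k})$ and the fact that $Z(\Gl(n,\overline{k})) = \overline{k}^{*}I$, this forces $A^2 = cI$ for some $c \in k^*$, while $\Inn_A \ne \id$ forces $A \notin k^{*}I$. Since $\Inn_{\mu A} = \Inn_A$ for $\mu \in k^*$, I work modulo the scaling $A \mapsto \mu A$, which replaces $c$ by $\mu^{2} c$; hence the square class $[c] \in k^{*}/(k^{*})^{2}$ is a well-defined invariant of the isomorphy class of $\Inn_A$. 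Moreover, after a short rationality argument, $\Inn(\Sl(n,k))$-isomorphy of $\Inn_A, \Inn_B$ translates to $B = \mu\, C^{-1} A C$ for some $\mu \in k^{*}$ and $C \in \Gl(n,k)$, so the problem becomes a $\Gl(n,k)$-conjugacy-plus-scaling classification.

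Next I would split into cases based on $[c]$. In \emph{Case 1}, $c \in (k^{*})^{2}$: rescale so $A^{2} = I$. Because $\cha k \ne 2$, the polynomial $x^{2}-1$ is separable and splits over $k$, so $A$ is diagonalizable over $k$ with eigenvalues $\pm 1$, and its $\Gl(n,k)$-conjugacy class is determined by $i := \dim\ker(A+I) \in \{1,\dots,n-1\}$ (the endpoints yield scalar $A$). The residual scaling $A \mapsto -A$ exchanges the two eigenspaces, hence swaps $i$ with $n-i$, so one may restrict to $i \leq \lfloor n/2\rfloor$; this yields the family in (i). In \emph{Case 2}, $c \notin (k^{*})^{2}$: then $x^{2}-c$ is irreducible over $k$ and must be the minimal polynomial of $A$, so by rational canonical form $A$ is $\Gl(n,k)$-conjugate to a block diagonal matrix whose blocks are all the companion matrix of $x^{2}-c$; dimension count forces $n$ even with exactly $n/2$ blocks, giving $A \sim L_{n/2,\,c}$. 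Since rescaling only alters $c$ by squares, the true invariant is $x := [c] \in k^{*}/(k^{*})^{2}$. We exclude $x \equiv 1$ because $L_{n/2,1}$ is $\Gl(n,k)$-conjugate to $\diag(I_{n/2},-I_{n/2})$, which already appears as the $i = n/2$ representative in Case 1; this yields family (ii) together with the parity restriction.

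Finally I would verify distinctness across the produced list: within Case 1 the eigenspace dimensions are conjugation invariants, within Case 2 the class $x \in k^{*}/(k^{*})^{2}$ is the scaling-corrected characteristic invariant, and the two cases are separated by whether $[c] = [1]$. The main technical obstacle I anticipate is the rationality step used at the end of the first paragraph, namely showing that $\Inn(\Sl(n,k))$-equivalence really does amount to $\Gl(n,k)$-conjugation modulo scalars --- that is, that any $C \in \Sl(n,\overline{k})$ whose inner automorphism preserves $\Sl(n,k)$ can be replaced, modulo $\overline{k}^{*}I$, by an element of $\Gl(n,k)$. Once this is in place, every other step reduces to standard rational canonical form arguments over $k$.
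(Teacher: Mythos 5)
The paper does not actually prove this theorem: it is imported verbatim from \cite{HWD04}, so there is no internal proof to compare you against. Your outline is essentially the argument of \cite{HWD04}: reduce to $A^2=cI$, note that under the scaling $A\mapsto\mu A$ only the square class $[c]\in k^*/(k^*)^2$ survives, and then classify by rational canonical form according to whether $c$ is a square. The case analysis, the invariants (the multiset $\{i,n-i\}$ of eigenspace dimensions in the split case, the class $[c]$ in the non-split case), the parity restriction in case (ii), and the observation that $L_{n/2,1}$ collapses into family (i) as the $i=n/2$ representative are all correct.

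Two points need repair or expansion before this is a proof. First, the justification ``by Zariski density of $\Sl(n,k)$ in $\Sl(n,\overline{k})$'' fails when $k$ is finite, which is within the scope of the theorem; the conclusion that the centralizer of $\Sl(n,k)$ in $\Gl(n,\overline{k})$ consists of scalars still holds, but it should be derived from the absolute irreducibility of the natural module (Schur's lemma), or by the sort of explicit matrix computation this paper carries out for $\Sp(2n,k)$ in Theorem \ref{Inn=Id}. Second, the rationality step you defer --- that any $A\in\Gl(n,\overline{k})$ with $\Inn_A(\Sl(n,k))=\Sl(n,k)$ is, modulo $\overline{k}^*I$, an element of $\Gl(n,k)$, so that isomorphy becomes $\Gl(n,k)$-conjugation up to scalars --- is not a ``short argument'' but the main technical theorem of \cite{HWD04}. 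Its analogue for $\Sp(2n,k)$ is Theorem \ref{charthms} of the present paper, whose proof is a long entry-by-entry computation, and there it only places the entries in a quadratic extension $k[\sqrt{\alpha}]$ (which is precisely why $\Sp(2n,k)$ has Types 2 and 4). For $\Sl(n,k)$ the stronger conclusion does hold and makes your Cases 1 and 2 go through over $k$, but until that lemma is supplied the proof is an architecture rather than an argument.
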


For the purposes of this paper, we will use matrices of the form $\left(\begin{smallmatrix}0 & I_{\frac{n}{2}} \\xI_{\frac{n}{2}} & 0\end{smallmatrix}\right)$ (and there multiples) rather than $L_{\frac n 2, x}$. Either of these serves as a member of the isomorphy class listed in the previous theorem. We will eventually see that all of the isomorphy classes of $\Sp(2n,k)$ are just isomorphy classes of $\Sl(n,k)$ that have been divided into multiple isomorphy classes.

We now begin to define the notion of a symplectic group. To do this, we must first define orthogonal groups. Let $M$ be the matrix of a non-degenerate bilinear form $\beta$
over $k^n$ 
with respect to a basis $\{ e_1, \dots e_n \}$ of $V$.
We will say that $M$ is the matrix of $\beta$ if 
the basis $\{ e_1, \dots e_n \}$ 
is the standard basis of $k^n$. 

The typical notation for the orthogonal group is $\oo(n,k)$, which is the group $$\oo(n,k)= \{ A\in \M(n,k)\mid (Ax)^T(Ay) = x^Ty\}.$$ This group consists of the matrices which fix the standard dot product. This can be generalized to any non-degenerate bilinear $\beta$, which will yield the group $$\oo(n,k,\beta)= \{ A\in \M(n,k)\mid \beta(Ax,Ay) = \beta(x,y) \}.$$ If $M$ is the matrix of $\beta$ with respect to the standard basis, then we can equivalently say $$\oo(n,k,\beta)= \{ A\in \M(n,k)\mid A^TMA = M \}.$$ It is clear from this definition that all matrices in $\oo(n,k,\beta)$ have determinant 1 or $-1$. We are interested in the case where $M$ is a skew-symmetric matrix. 

We note a couple of important facts, the first of which will be used repeatedly throughout this paper.
\begin{enumerate}

\item Skew-symmetric matrices of even dimension are congruent to the matrix $J = J_{2n} = \left(\begin{smallmatrix}0 & I_n \\-I_n & 0\end{smallmatrix}\right).$

\item If $\beta_1$ and $\beta_2$ correspond to $M_1$ and $M_2$, then $\oo(n,k,\beta_1)$ and $\oo(n,k,\beta_2)$ are isomorphic via 
$$\Phi:\oo(n,k,\beta_1) \rightarrow \oo(n,k,\beta_2): X \rightarrow Q^{-1}XQ$$ for some $Q \in \Gl(n,k)$ if $Q^TM_1Q = M_2$ ($M_1$ and $M_2$ are congruent via $Q$).

\end{enumerate}

So, we will assume that $\beta$ is such that we can replace $M$ with the matrix $J$. When we do this, then we write $\Sp(2n,k) = \oo(n,k,\beta)$, and we call this the \textit{ Symplectic Group}. It can be shown that all matrices in $\Sp(2n,k)$ have determinant 1, so in fact $\Sp(2n,k)$ is a subgroup of $\Sl(2n,k)$. Lastly, note that to classify the involutions of an orthogonal group where $M$ is skew-symmetric, one can apply the classification that will follow by simply using the isomorphism given above.

\section{Automorphisms of $\Sp(2n,k)$}

It follows from a proposition on page 191 of \cite{Borel91} that $\aut(\Sp(2n,\overline{k})) /\Inn(\Sp(2n,\overline{k})))$ must be a subgroup of the diagram automorphisms of the Dynkin diagram $C_n$. Since $C_n$ only has the trivial diagram autormphism, then we have that $\aut(\Sp(2n,\overline{k})) = \Inn(\Sp(2n,\overline{k}))$. When $k$ is not algebraically closed, then all automorphisms of $\Sp(2n,k)$ will still be of the form $\Inn_A$ for some $A \in \Sp(n,\overline{k})$ since all automorphisms of $\Sp(2n,k)$ must also be an automorphism of $\Sp(n,\overline{k}).$ Thus, the classifications and characterizations that follow in this paper consider all automorphisms and involutions of $\Sp(2n,k)$.

We now examine which automorphisms will act as the identity on $\Sp(2n,k)$. This will prove to be useful when we classify matrix representatives for automorphisms.

\begin{theorem}\label{Inn=Id}
Let $G = \Sp(2n,k)$. If $\Inn_{A}|_G = \Id$ for some $A \in \Gl(2n,\overline{k})$  then $A=pI$ for some $p\in{\overline{k}}.$
\end{theorem}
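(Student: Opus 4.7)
The hypothesis $\Inn_A|_G = \Id$ unwinds to $AX = XA$ for every $X \in \Sp(2n,k)$, so the task is to show that any $A$ in the centralizer of $\Sp(2n,k)$ inside $\M(2n,\overline{k})$ is scalar. My plan is elementary: test $A$ against two explicit families of symplectic unipotents using only matrices with entries in $\{0, \pm 1\}$, so the argument works uniformly over every field of characteristic different from $2$ (including finite ones), with no need for Zariski density tricks.

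With $J = \left(\begin{smallmatrix} 0 & I_n \\ -I_n & 0 \end{smallmatrix}\right)$ as the defining form, a direct check of $X^TJX = J$ verifies that
$$X_S = \begin{pmatrix} I_n & S \\ 0 & I_n \end{pmatrix} \quad \text{and} \quad Y_T = \begin{pmatrix} I_n & 0 \\ T & I_n \end{pmatrix}$$
both lie in $\Sp(2n,k)$ for every symmetric $S, T \in \M(n,k)$. Decomposing $A$ into $n \times n$ blocks $A_{ij}$ and expanding $A X_S = X_S A$, the $(1,1)$ block gives $S A_{21} = 0$, so $S = I_n$ forces $A_{21} = 0$; the $(1,2)$ block gives $A_{11}S = S A_{22}$, so $S = I_n$ forces $A_{11} = A_{22}$, after which $A_{11}$ commutes with every symmetric $S$ over $k$. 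An analogous expansion of $A Y_T = Y_T A$ kills $A_{12}$. Thus $A = \diag(B, B)$ with $B := A_{11}$ commuting with every symmetric matrix over $k$.

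It remains to show such a $B \in \M(n,\overline{k})$ is scalar. Setting $S = E_{ii}$ in $BS = SB$ yields $B E_{ii} = E_{ii} B$, which isolates the $i$-th column and $i$-th row of $B$ respectively; the match forces both to vanish off the $(i,i)$ entry, so $B$ is diagonal, say $B = \diag(b_1, \dots, b_n)$. Then for $i \ne j$, setting $S = E_{ij} + E_{ji}$ yields $b_i E_{ij} + b_j E_{ji} = b_j E_{ij} + b_i E_{ji}$, i.e.\ $b_i = b_j$. Hence $B = p I_n$ for some $p \in \overline{k}$ and $A = p I_{2n}$, as desired.

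The main obstacle is purely computational bookkeeping in the block expansions of $A X_S = X_S A$ and $A Y_T = Y_T A$; choosing unipotent $X_S, Y_T$ (rather than, say, diagonal or general symplectic elements) is what makes the calculation transparent, and using only test matrices with entries in the prime field sidesteps any worry about whether the chosen elements actually lie in $\Sp(2n,k)$ rather than merely in $\Sp(2n,\overline{k})$.
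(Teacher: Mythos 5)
Your proof is correct and follows essentially the same strategy as the paper: testing $A$ against explicit symplectic matrices to first kill the off-diagonal blocks and identify the diagonal ones (your $X_{I_n}$ and $Y_{I_n}$ are exactly the paper's $W_1$ and $W_2$), and then forcing the common diagonal block to be scalar. The only difference is cosmetic — the paper finishes with block-diagonal signed permutation matrices, while you stay inside the unipotent family and use $S=E_{ii}$ and $S=E_{ij}+E_{ji}$; both work.
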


\begin{proof}
Suppose $\Inn_{A}|_G=\Id$ for some $A\in{\Gl(2n,\overline{k})}$.  Then for all $X\in{G}$ we have $\Inn_{A}(X)=A^{-1}XA=X$ which means that  $AX=XA$ for all $X\in{G} $.
 Let 
 $$A=\begin{pmatrix} A_1 & A_2 \\ A_3 & A_4 \end{pmatrix} $$ and consider the matrix
$$W_1=\begin{pmatrix} I_n & I_n\\ 0 & I_n \end{pmatrix}. $$\\
Since $W_1\in{G}, AW_1=W_1A$ which implies\\
\begin{align*}
\begin{pmatrix} A_1 & A_2 \\ A_3 & A_4 \end{pmatrix}\begin{pmatrix} I_n & I_n \\ 0 & I_n \end{pmatrix}&=\begin{pmatrix} I_n & I_n \\ 0 & I_n \end{pmatrix}\begin{pmatrix} A_1 & A_2 \\ A_3 & A_4 \end{pmatrix}\\
\begin{pmatrix} A_1 & A_1+A_2 \\ A_3 & A_3+A_4 \end{pmatrix}&=\begin{pmatrix} A_1 +A_3 & A_2 +A_4\\ A_3 & A_4 \end{pmatrix}.
\end{align*}

Hence,  $A_3=0$ and $A_1=A_4$.  With this information in hand we are now able to rewrite $A$ as  $A=\begin{pmatrix} A_1 & A_2 \\ 0 & A_1 \end{pmatrix}$. We now consider the matrix  $W_2=\begin{pmatrix} I_n & 0 \\ I_n & I_n \end{pmatrix}.$  Now $W_2$ is also in $G$ and thus $AW_2=W_2A$ and thus
\begin{align*}
\begin{pmatrix} A_1 & A_2 \\ 0 & A_1 \end{pmatrix}\begin{pmatrix} I_n & 0 \\ I_n & I_n \end{pmatrix}&=\begin{pmatrix} I_n & 0 \\ I_n & I_n \end{pmatrix}\begin{pmatrix} A_1 & A_2 \\ 0 & A_1 \end{pmatrix}\\
\begin{pmatrix}  A_1+A_2 & A_2 \\ A_1 & A_1 \end{pmatrix}&=\begin{pmatrix} A_1  & A_2\\ A_1 & A_2+A_1 \end{pmatrix}.
\end{align*}
Which implies that $A_2 =0$ and thus $ A=\begin{pmatrix} A_1 & 0 \\ 0 & A_1 \end{pmatrix}$. \\
Let $$\bar{X_k}= \begin{pmatrix}X_k & 0\\ 0 & X_k \end{pmatrix} $$ where
$$X_{k}:=\begin{pmatrix}I_{n-k-1}
& \hdots & 0 \\ \vdots & -1 & \vdots \\
0 & \hdots & I_k
\end{pmatrix}$$
and $k=0,1,...,n-1.$  Then $\bar{X_k}\in G$ and hence we may utilize the fact that $A\bar{X}_k=\bar{X}_kA$, to conclude that 

$$\begin{pmatrix}A_1X_k & 0\\ 0 & A_1X_k \end{pmatrix} = \begin{pmatrix}X_kA_1 & 0 \\ 0 & X_kA_1 \end{pmatrix}.$$

From the above equality we see that  $A_1X_k = X_kA_1$.   Define $A_1=(a_{i,j})$ for $i,j=1,2,..,n$.  Then $A_1X_k = X_kA_1$ implies 

$$\begin{pmatrix} a_{11} & a_{12} & \hdots & -a_{1,n-k} & \hdots & a_{1,n}\\
a_{21} & a_{22} & \hdots & -a_{2,n-k} & \hdots & a_{2,n}\\
\vdots & \vdots & \hdots & \vdots & \hdots & \vdots \\
a_{n-k,1} & a_{n-k,2} & \hdots & -a_{n-k,n-k} & \hdots & a_{n-k,n}\\
\vdots & \vdots & \hdots & \vdots & \hdots & \vdots \\
a_{n,1} & a_{n,2} & \hdots & -a_{n,n-k} & \hdots & a_{n,n}
\end{pmatrix} =$$
$$\begin{pmatrix} a_{11} & a_{12} & \hdots & a_{1,n-k} & \hdots & a_{1,n}\\
a_{21} & a_{22} & \hdots & a_{2,n-k} & \hdots & a_{2,n}\\
\vdots & \vdots & \hdots & \vdots & \hdots & \vdots \\
-a_{n-k,1} & -a_{n-k,2} & \hdots & -a_{n-k,n-k} & \hdots & -a_{n-k,n}\\
\vdots & \vdots & \hdots & \vdots & \hdots & \vdots \\
a_{n,1} & a_{n,2} & \hdots & a_{n,n-k} & \hdots & a_{n,n} \end{pmatrix}.$$

\noindent Hence, it follows that  $ a_{n-k,j} = a_{j,n-k} =0$ for  $j\not= n-k$ and $k=0,1...,n-1, \; \; j=1,2,..,n$. Therefore we now obtain the fact that  $A$ is a diagonal matrix say, 
$$A= \begin{pmatrix} A_d & 0 \\ 0 & A_d

\end{pmatrix}  \; \; \text{with} \; \; A_d=\begin{pmatrix} a_{11} & 0 & \hdots & 0\\  0 & a_{22} & \hdots & 0\\
\vdots & \vdots & \ddots & 0\\ 0 & 0 & \hdots & a_{n,n} \end{pmatrix}.$$

Let $$\overline{Y_l} = \begin{pmatrix} Y_l & 0\\ 0 & Y_l \end {pmatrix} \; \; \text{where} \; \; \;
Y_l= \left(\begin{array}{ccccc}I_l & 0 & 0 & \hdots & 0 \\0 & 0 & 1 & \hdots & 0 \\0 & 1 & 0 & \hdots & 0 \\\vdots & \vdots &  &  & I_{{n-l-2}\times {n-l-2}} \\0 & 0 &  &  & \end{array}\right)$$

\noindent and $l=0,1,..., n-2$.  Then $\overline{Y_l}\in{\Sp(2n,k)}$ and again $A\overline{Y_l}=\overline{Y_l}A$ which implies $A_dY_l=Y_lA_d$.  Therefore, we obtain the following equality

$$\begin{pmatrix} a_{11} & 0 & 0 & 0 & 0 & 0 & 0 & 0 & 0\\ 0 & a_{22} & 0 & 0 & 0 & 0 & 0 & 0 & 0\\
0 & 0 & \ddots & 0 & 0 & 0 & 0 & 0\\ 0 & 0 & 0 & a_{ll} & 0 & 0 & 0 & 0 & 0\\
0 & 0 & 0 & 0 & 0 & a_{l+1,l+1} & 0 & 0  \\ 0 & 0 & 0 & 0 & a_{l+2,l+2} & 0 & 0 & 0  & 0 \\
0 & 0 & 0 & 0 &  0 & 0 &a_{l+3,l+3} & 0 & 0 \\  0 & 0 & 0 & 0 & 0 & 0 & 0 & \ddots & 0\\
 0 & 0 & 0 & 0 & 0 & 0 & 0 & 0 & a_{n,n} \end{pmatrix} =$$
 
$$ \begin{pmatrix} a_{11} & 0 & 0 & 0 & 0 & 0 & 0 & 0 & 0\\ 0 & a_{22} & 0 & 0 & 0 & 0 & 0 & 0 & 0\\
0 & 0 & \ddots & 0 & 0 & 0 & 0 & 0\\ 0 & 0 & 0 & a_{ll} & 0 & 0 & 0 & 0 & 0\\
0 & 0 & 0 & 0 & 0 & a_{l+2,l+2} & 0 & 0  \\ 0 & 0 & 0 & 0 & a_{l+1,l+1} & 0 & 0 & 0  & 0 \\
0 & 0 & 0 & 0 &  0 & 0 &a_{l+3,l+3} & 0 & 0 \\  0 & 0 & 0 & 0 & 0 & 0 & 0 & \ddots & 0\\
 0 & 0 & 0 & 0 & 0 & 0 & 0 & 0 & a_{n,n} \end{pmatrix}$$

Hence $a_{l+1,l+1} = a_{l+2, l+2}$ for $l=0,1,...,n-2$. That is $A=p\Id$ for some $p\in{\overline{k}}$.
\end{proof}

The following is a list of notation which will be used in the proof of Theorem \ref{charthms}.

\medskip

\noindent \quad Let $X_{r.s}$ be the $n\times n$ diagonal matrix with a $-1$ in the $(r,r)$ and $(s,s)$ entries and $1$'s everywhere else.

\medskip

\noindent \quad Let $X_r$ be the $n\times n$ diagonal matrix with a $-1$ in the $(r,r)$ position and $1$'s everywhere else.

\medskip
\noindent  \quad Let $E_{r,s}$ be the $n\times n$ matrix with a $1$ in the $(r,s)$ entry and $0's$ everywhere else.

\medskip
\noindent \quad Let $T_c$ be the $c\times c$ antidiagonal matrix with $1$'s on the antidiagonal and $0$'s everywhere else.

\medskip
\noindent \quad Let $I_c$ be the $c\times c$ identity matrix.  If the size of the identity matrix is understood from the context then $I$ may be used to represent $I_c$.

\begin{align}
J_{2n}&=J=\left(\begin{array}{cc}0 & I_n \\-I_n & 0\end{array}\right)\\
Y_{r,s}&=\left(\begin{array}{cc}T_{r+s-1} & 0 \\0 & I_{n-(r+s-1)}\end{array}\right)\\
Z_{r,s}&=\begin{pmatrix} Y_{r,s} & 0\\E_{r,s} &
Y_{r,s}\end{pmatrix}\\
\bar{Z_{r,s}}&=\begin{pmatrix} -Y_{r,s} & 0\\E_{r,s} &
-Y_{r,s}\end{pmatrix}\\
Z'_{r,s}&=\begin{pmatrix}
Y_{r-n,s-n} & E_{r-n,s-n}\\0 & Y_{r-n,s-n}\end{pmatrix}\\
\bar{Z}'_{r,s}&=\begin{pmatrix} Y_{r-n,s-n} & E_{r-n,s-n}\\0 &
Y_{r-n,s-n}\end{pmatrix}\\
M_{r,s}&=\begin{pmatrix} E_{s-n,r} &
Y_{s-n,r}\\-Y_{s-n,r} & 0\end{pmatrix}\\
\bar{M}_{r,s}&=\begin{pmatrix} E_{s-n,r} &
-Y_{s-n,r}\\Y_{s-n,r} & 0\end{pmatrix}\\
U_{r,s}&=\begin{pmatrix} I_{-n+(r+s-1)} & 0\\0 &
T_{2n-(r+s-1)}\end{pmatrix}\\
 V_{r,s}&=\begin{pmatrix} U_{r,s} & 0\\E_{r,s} &
U_{r,s}\end{pmatrix}\\
\bar{V}_{r,s}&=\begin{pmatrix} -U_{r,s} & 0\\E_{r,s} &
-U_{r,s}\end{pmatrix}\\
V'_{r,s}&=\begin{pmatrix} U_{r-n,s-n} & E_{r-n,s-n}\\0 &
U_{r-n,s-n}\end{pmatrix}\\
\bar{V'}_{r,s}&=\begin{pmatrix} -U_{r-n,s-n} & E_{r-n,s-n}\\0 &
-U_{r-n,s-n}\end{pmatrix}\\
N_{r,s}&=\begin{pmatrix} E_{s-n,r} &
U_{s-n,r}\\-U_{s-n,r} & 0\end{pmatrix}\\
\bar{N_{r,s}}&=\begin{pmatrix} E_{s-n,r} &
-U_{s-n,r}\\U_{s-n,r} & 0\end{pmatrix}\\
W_{r,s}&=\begin{pmatrix} T_n & 0\\E_{r,s} & T_n\end{pmatrix}\\
\bar{W}_{r,s}&= \begin{pmatrix} -T_n & 0\\E_{r,s} &
-T_n\end{pmatrix}\\
W'_{r,s}&=\begin{pmatrix} T_n &
E_{r-n,s-n}\\0 & T_n\end{pmatrix}\\
\bar{W}'_{r,s}&=\begin{pmatrix} -T_n & E_{r-n,s-n}\\0 &
-T_n\end{pmatrix}\\
F_{r,s}&=\begin{pmatrix}
E_{s-n,r} &T_n\\T_n & 0\end{pmatrix}\\
\end{align}

\begin{align}
\bar{F}_{r,s}&=\begin{pmatrix} E_{s-n,r} & -T_n\\-T_n & 0\end{pmatrix}
\end{align}


We now have the following result that characterizes inner-automorphisms of $\Sp(2n,k)$. We will see that for $\Inn_A$ to be an inner-involution of $\Sp(2n,k)$, that we can not only assume that $A$ is symplectic, but for the entries of $A$, we do not need the algebraic closure of the field $k$, but either the field itself or a quadratic extension of $k$.

\begin{theorem}
\label{charthms}
Suppose $A\in\Gl(2n,\bar{k})$ , $\bar{G}=\Sp(2n,\bar{k})$ and $G=\Sp(2n,k)$ .

\begin{enumerate}

\item The inner automorphism $\Inn_A$ keeps $\Sp(2n,\bar{k})$ invariant if and only if $A=pM$ for some
$p\in\bar{k}$ and $M\in \Sp(2n,\bar{k})$.

\item  If $A\in \Sp(2n,\bar{k})$, then $\Inn_A$ keeps $\Sp(2n,k)$ invariant if and only if we can show $A \in \Sp(2n, k(\sqrt{\alpha}))$ where each entry of $A$ is a $k$-multiple of $\sqrt{\alpha},$ for some $\alpha \in k$.  

\end{enumerate}\
\end{theorem}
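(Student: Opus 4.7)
My plan is to treat the two parts in turn, in each case proving one direction by direct calculation and the other by a structural argument; for part~(2) the forward direction leans crucially on Theorem~\ref{Inn=Id}.

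For part~(1), the reverse direction is immediate: if $A = pM$ with $M \in \Sp(2n,\bar k)$, then $\Inn_A = \Inn_M$, and a one-line check using $M^T J M = J$ shows that $\Inn_M$ sends $\Sp(2n,\bar k)$ into itself. For the forward direction I would rewrite the hypothesis $(A^{-1}XA)^T J (A^{-1}XA) = J$ for every $X \in \Sp(2n,\bar k)$ as $X^T N X = N$ with $N := A^{-T} J A^{-1}$. This $N$ is automatically skew-symmetric and is preserved by the entire symplectic group; since the space of $\Sp(2n,\bar k)$-invariant skew-symmetric bilinear forms on $\bar k^{2n}$ is one-dimensional (spanned by $J$), I conclude $N = cJ$ for some $c \in \bar k^{\ast}$. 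Rearranging gives $A^T J A = c^{-1} J$, so $M := \sqrt{c}\,A$ lies in $\Sp(2n,\bar k)$ and $A = pM$ with $p = 1/\sqrt{c} \in \bar k$.

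For part~(2), the reverse direction is again concrete: writing $A = \sqrt{\alpha}\,A_0$ with $A_0 \in \M(2n,k)$, the scalar cancels in $\Inn_A(X) = A_0^{-1} X A_0$, which has entries in $k$; the symplectic hypothesis on $A$ forces $A_0^T J A_0 = \alpha^{-1} J$, from which a short calculation shows $\Inn_A(X) \in \Sp(2n,k)$. For the forward direction I would apply Galois descent. For any $\sigma \in \gal(\bar k/k)$ and $X \in \Sp(2n,k)$ the matrix $A^{-1}XA$ already lies in $\Sp(2n,k)$ and is thus fixed by $\sigma$; unwinding $\sigma(A^{-1}XA) = \sigma(A)^{-1} X \sigma(A)$ gives $\Inn_{\sigma(A)}(X) = \Inn_A(X)$ for every such $X$, which rearranges to $\Inn_{A\sigma(A)^{-1}}|_{\Sp(2n,k)} = \Id$. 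By Theorem~\ref{Inn=Id}, $A\sigma(A)^{-1} = p_\sigma I$ and hence $\sigma(A) = p_\sigma^{-1} A$ for every $\sigma \in \gal(\bar k/k)$. Choosing any nonzero entry $a$ of $A$, the matrix $A_0 := a^{-1} A$ is then Galois-invariant entry-by-entry and so lies in $\M(2n,k)$; substituting $A = a A_0$ into $A^T J A = J$ yields $A_0^T J A_0 = a^{-2} J$, and since the left-hand side has entries in $k$ we conclude $a^2 \in k$. Setting $\alpha := a^2$ gives $a = \sqrt{\alpha}$, so every entry of $A$ is a $k$-multiple of $\sqrt{\alpha}$, proving the claim.

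The main obstacle is the forward direction of~(2), where the Galois-cocycle step hinges on being able to promote ``$\Inn_M$ trivial on $\Sp(2n,k)$'' to ``$M$ is scalar'' for an arbitrary $M \in \Gl(2n,\bar k)$---which is exactly what Theorem~\ref{Inn=Id} provides and is the reason that theorem had to be established first. A secondary point in part~(1) is the one-dimensionality of $\Sp(2n,\bar k)$-invariant skew-symmetric bilinear forms on $\bar k^{2n}$; if a representation-theoretic appeal is to be avoided, this can be obtained by plugging enough symplectic transvections and diagonal symplectic elements into $X^T N X = N$ to pin $N$ down entry-by-entry, in the same style as the calculations in the proof of Theorem~\ref{Inn=Id}.
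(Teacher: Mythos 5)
Your part (1) is essentially the paper's argument in different clothing: the paper massages the invariance condition into $\Inn_{AJ^{-1}A^TJ}|_{\bar G}=\Id$ and invokes Theorem \ref{Inn=Id}, while you phrase the same fact as one-dimensionality of the space of $\Sp(2n,\bar k)$-invariant skew-symmetric forms. The two are interchangeable (saying $X^TNX=N$ for all $X$ is saying $J^{-1}N$ centralizes $\bar G$), and your closing remark that the one-dimensionality can be re-derived by explicit matrix calculations in the style of Theorem \ref{Inn=Id} is exactly right. Your reverse direction of (2) is also fine, and in fact more careful than the paper's (which writes $A=pM$ with ``$M\in G$'', glossing over the fact that $A/\sqrt{\alpha}$ satisfies $M^TJM=\alpha^{-1}J$ rather than being symplectic).

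The real divergence is the forward direction of (2). The paper proves it by a long, purely computational argument: feeding carefully chosen symplectic matrices into $\Inn_A$ and taking linear combinations of entries of the images to show $a_{r,i}a_{s,j}\in k$ for all indices, whence all entries lie in a common quadratic extension $k[\sqrt{\alpha}]$ with $\alpha=a_{i,s}^2$. Your Galois-descent argument --- deduce $\Inn_{A\sigma(A)^{-1}}|_{G}=\Id$, apply Theorem \ref{Inn=Id} to get $\sigma(A)=p_\sigma^{-1}A$, then normalize by a nonzero entry --- is much shorter and is a complete proof \emph{provided} the fixed field of $\gal(\bar k/k)$ acting on $\bar k$ is $k$ itself, i.e.\ provided $k$ is perfect. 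For a non-perfect $k$ of characteristic $p>2$, a matrix fixed entrywise by all of $\gal(\bar k/k)$ is only guaranteed to have entries in the perfect closure $k^{1/p^\infty}$, not in $k$, so your $A_0=a^{-1}A$ need not land in $\M(2n,k)$. Since the theorem is stated over an arbitrary field of characteristic $\neq 2$ (and the introduction pointedly notes that the characterization in \cite{Helm2000} was only proven for perfect fields), this is a genuine gap relative to the stated generality: the paper's entry-by-entry computation is precisely what replaces Galois descent when $k$ is not perfect. For $k$ algebraically closed, real, finite, or $p$-adic --- the cases the paper ultimately classifies --- your argument is complete as written.
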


\begin{proof}
\begin{enumerate}

\item $\Longleftarrow$  Suppose $A=pM$ for some  $p\in\bar{k}$  and
$M\in\bar{G}$. Let $X\in\bar{G}$, then
$$\Inn_A(X) = \Inn_{pM}(X)=(pM)^{-1}X(PM)=M^{-1}XM$$
Since $M, M^{-1} , X \in\bar{G},  M^{-1}XM\in\bar{G}$ and thus $\Inn_A$ keeps $\bar{G}$ invariant.\\

$\Longrightarrow$ Suppose $\Inn_A$ keeps $\bar{G}$ invariant.  Then for
any $X\in\bar{G}, $

 $B=\Inn_A(X)=A^{-1}XA\in\bar{G}$.   Since $B\in\bar{G}$, by definition $B^TJB=J$ which implies that
  $B=J^{-1}(B^T)^{-1}J$.  In addition, since $B=A^{-1}XA$, we have that $ (B^T)^{-1}=A^T(X^T)^{-1}(A^T)^{-1}$.  Thus the following is true
 $$A^{-1}XA=B$$ implies
 $$A^{-1}XA=J^{-1}(B^T)^{-1}J$$ which implies
 $$A^{-1}XA=J^{-1}(A^T(X^T)^{-1}(A^T)^{-1})J$$ hence 
 $$X=AJ^{-1}A^T(X^T)^{-1}(A^T)^{-1}JA^{-1}.$$

 Now since $X\in\bar{G}$, we know $(X^T)^{-1}= JXJ^{-1}$ which means
 $$X=AJ^{-1}A^T(JXJ^{-1})(A^T)^{-1}JA^{-1}$$ that is 
$$ X=(AJ^{-1}A^TJ)X(AJ^{-1}A^TJ)^{-1}$$
$$\text{i.e.} \; \; \Inn_{AJ^{-1}A^TJ}(X)=X. $$
Therefore by Lemma \ref{Inn=Id} $AJ^{-1}A^TJ=q\Id$ for some $q\in\bar{k}^*$ which implies
$q^{-1}A J^{-1}A^TJ =\Id$.  Let $p\in\bar{k}^*$ such that $p^2=q^{-1}$.  Then for $M=pA$ we have
$$MJ^{-1}M^TJ=pAJ^{-1}pA^TJ=p^2AJ^{-1}A^{T}J=q^{-1}AJ^{-1}A^TJ=I.$$
Therefore, $MJ^{-1}M^TJ=\Id$ which implies $M^TJM=J$ ie. $M\in\bar{G}.$\\

\item $\Longleftarrow$  Suppose $A=pM$ for some $p\in\bar{k}$ and
$M\in G$.  Let $X\in G$, then
$$\Inn_A(X)=\Inn_{pM}(X)=p^{-1}M^{-1}XpM=M^{-1}XM$$
Since $M^{-1}, X, M \in G$ we know $\Inn_A(X)=M^{-1}XM\in G$
and thus $\Inn_A$ keeps $G$ invariant.

\medskip

$\Longrightarrow$ Suppose $A=(a_{ij})\in\bar{G}$ and $\Inn_A$ keeps $G$
invariant.

We will first show that $a_{ri}a_{rj}+a_{si}a_{sj}\in G.$\\

\noindent \textbf{CASE 1:}  Suppose $r,s\leq n$.\\
\noindent \textbf{Subcase a:}  Suppose $i\leq n$.\\
The $(i,j)$ entry of $\Inn_A(J)$ is given by
$$a_{1,n+i}a_{1,j}+a_{2,n+i}a_{2,j}+...+a_{2n,n+i}a_{2n,j}\in k$$
since $J\in G$ and $\Inn_A$ keeps G invariant.  By the same argument
the $(i,j)$ entry of
 $\Inn_A\begin{pmatrix} I & I\\0 & I\end{pmatrix}$ given by\\
$a_{1j}a_{n+1,n+i}+a_{2j}a_{n+2,n+i}+...+a_{nj}a_{2n,n+i}+a_{n+1,n+i}a_{n+1,j}+
a_{n+2,n+i}a_{n+2,j}+...+a_{2n,n+i}a_{2n,j}-a_{1,n+i}a_{n+1,j}-a_{2,n+i}a_{n+2,j}-
...-a_{n,n+i}a_{2n,j}\in k$\\
Hence the $(i,j)$ position of $\Inn_A(J) -\Inn_A\begin{pmatrix} I & I\\
0 & I\end{pmatrix}$  given by\\
$-a_{1j}a_{n+1,n+i}-a_{2j}a_{n+2,n+i}-...-a_{nj}a_{2n,n+i}+a_{1,n+i}a_{1,j}+a_{2,n+i}a_{2,j}
+...+a_{n,n+i}a_{n,j}+a_{1,n+i}a_{n+1,j}+a_{2,n+i}a_{n+2,j}+
...+a_{n,n+i}a_{2n,j}\in k.$\\
We know the matrix $\begin{pmatrix} I & 0\\ X_{rs} & I
\end{pmatrix}$ is in G and hence the $(i,j)$ entry of $\Inn_A\begin{pmatrix} I & 0\\ X_{rs} & I
\end{pmatrix}$ given by \\
$a_{1j}a_{n+1,n+i}+a_{2j}a_{n+2,n+i}+...+a_{nj}a_{2n,n+i}-a_{1,n+i}a_{1,j}-a_{2,n+i}a_{2,j}
-(-a_{r,n+i}a_{r,j})-a_{r+1,n+i}a_{r+1,j}-...-(-a_{s,n+i}a_{s,j})-a_{s+1,n+i}a_{s+1,j}
-...-a_{n,n+i}a_{n,j}-a_{1,n+i}a_{n+1,j}-a_{2,n+i}a_{n+2,j}-...-a_{2n,n+i}a_{2n,j}\in k$\\
Now, the $(i,j)$ entry of $\Inn_A(J) -\Inn_A\begin{pmatrix} I & I\\
0 & I\end{pmatrix}+ \Inn_A\begin{pmatrix} I & 0\\ X_{rs} & I
\end{pmatrix}$ is given by $2a_{r,n+i}a_{rj}+2a_{s,n+i}a_{s,j}$ and
hence $a_{rl}a_{rj}+a_{sl}a_{sj}\in k$ for all $l>n $ and\\
$j=1,2,...,2n$.\\

\noindent \textbf{Subcase b:}  Suppose $i>n$\\
For $i>n$ the $(i,j)$ entry of $\Inn_A(J)$ yields\\
$-a_{1,i-n}a_{1,j}-a_{2,i-n}a_{2,j}-...-a_{2n,i-n}a_{2n,j}$\\
and the $(i,j)$ position of $\Inn_A\begin{pmatrix} I & I\\
0 & I\end{pmatrix}$ for $i>n$ is \\
$-a_{n+1,i-n}a_{1j}-a_{n+2,i-n}a_{2,j}-...-a_{2n,i-n}a_{n,j}-a_{n+1,i-n}a_{n+1,j}
-a_{n+2,i-n}a_{n+2,j}-...-a_{2n,i-n}a_{2n,j}+a_{1,i-n}a_{n+1,j}+a_{2,i-n}a_{n+2,j}
+...+a_{n,i-n}a_{2n,j}.$\\
Hence the $(i,j)$ entry of $\Inn_A(J) -\Inn_A\begin{pmatrix} I & I\\
0 & I\end{pmatrix}$ is given by \\
$a_{n+1,i-n}a_{1j}+a_{n+2,i-n}a_{2,j}+...+a_{2n,i-n}a_{n,j}
-a_{1,i-n}a_{1,j}-a_{2,i-n}a_{2,j}-...-a_{n,i-n}a_{n,j}
-a_{1,i-n}a{n+1,j}-a_{2,i-n}a_{n+2,j} -...-a_{n,i-n}a_{2n,j}.$\\
For $i>n$ the $(i,j)$ entry of $\Inn_A\begin{pmatrix} I & 0\\ X_{rs} & I\end{pmatrix}$ is\\


$-a_{n+1,i-n}a_{1,j}-a_{n+2,i-n}a_{2j}-...-a_{2n,i-n}a_{n,j} +
a_{1,i-n}a_{1,j}+a_{2,i-n}a_{2,j}+...+a_{n,i-n}a_{n,j}+
a_{1,i-n}a_{n+1,j}+a_{2,i-n}a_{n+2,j}+...+a_{n,i-n}a_{2n,j}.$\\


Therefore the $(i,j)$ entry of $\Inn_A(J) -\Inn_A\begin{pmatrix} I & I\\
0 & I\end{pmatrix}+ \Inn_A\begin{pmatrix} I & 0\\ X_{rs} & I
\end{pmatrix}$ yields $-2a_{r,i-n}a_{rj}-2a_{s,i-n}a_{s,j}$ and
since $i>n$ we have that $a_{rl}a_{rj}+a_{sl}a_{sj}\in k$ for all $l\leq n$ and $j=1,2,...,2n$.  Combining subcases a and b we have that
$a_{rl}a_{rj}+a_{sl}a_{sj}\in k$ whenever $r,s\leq n$.\\

\noindent \textbf{CASE 2:}  Suppose $r,s > n$.  Without loss of generality assume
$r<s.$\\
\noindent \textbf{Subcase a:}  Suppose $i\leq n$.  Now the matrix $\begin{pmatrix} I & 0\\
I & I\end{pmatrix}$ is in $G$ and since $\Inn_A$ keeps $G$ invariant
the $(i,j)$ entry of $\Inn_A\begin{pmatrix} I & 0\\
I & I\end{pmatrix}$ given by\\
$a_{1,j}a_{n+1,n+i}+a_{2,j}a_{n+2,n+i}+...+a_{n,j}a_{2n,n+i}-a_{1,n+i}a_{1,j}
-a_{2,n+i}a_{2,j}-...-a_{n,n+i}a_{n,j}-a_{1,n+i}a_{n+1,j}-a_{2,n+i}a_{n+2,j}
-...-a_{n,n+i}a_{2n,j} \in k$\\
Now the $(i,j)$ entry of $\Inn_A(J)$ was given in case 1 subcase a,
therefore the $(i,j)$ entry of $\Inn_A(J) +\Inn_A\begin{pmatrix} I & 0\\
I & I\end{pmatrix}$ is\\
$a_{1,j}a_{n+1,n+i}+a_{2,j}a_{n+2,n+i}+...+a_{n,j}a_{2n,n+i} +
a_{n+1,n+i}a_{n+1,j}+a_{n+2,n+i}a_{n+2,j}+...+a_{2n,n+i}a_{2n,j}-
a_{1,n+i}a_{n+1,j}-a_{2,n+i}a_{n+2,j}-...-a_{n,n+i}a_{2n,j}$\\
which must lie in $k$.  We know the matrix $\begin{pmatrix} I & X_{r-n,s-n}\\
0 & I\end{pmatrix}\in G$ and thus the automorphism $\Inn_A\begin{pmatrix} I & X_{r-n,s-n}\\
0 & I\end{pmatrix}\in G$ and its $(i,j)$ entry given by\\
$a_{1,j}a_{n+1,n+i}+a_{2j}a_{n+2,n+i}+...+a_{n,j}a_{2n,n+i}+
a_{n+1,n+i}a_{n+1,j}+a_{n+2,n+i}a_{n+2,j}+...+(-a_{r,n+i}a_{r,j})+a_{r+1,n+i}a_{r+1,j}
+...+(-a_{s,n+i}a_{s,j})+a_{s+1,n+i}a_{s+1,j}+...+a_{2n,n+i}a_{2n,j}-a_{1,n+i}a_{n+1,j}
-a_{2,n+i}a_{n+2,j}-...-a_n{n+i}a_{2n,j} \in k.$\\
Finally we observe that the $(i,j)$ entry of $\Inn_A(J) +\Inn_A\begin{pmatrix} I & 0\\
I & I\end{pmatrix}-\\ \Inn_A\begin{pmatrix} I & X_{r-n,s-n}\\
0 & I\end{pmatrix}$ is given by
$2a_{r,n+i}a_{rj}+2a_{s,n+i}a_{s,j}$ and
hence $a_{rl}a_{rj}+a_{sl}a_{sj}\in k$ for all $l>n $ and
$j=1,2,...,2n$.\\

\noindent \textbf{Subcase b:} Suppose $i>n$.  The $(i,j)$ entry of $\Inn_A\begin{pmatrix} I & 0\\
I & I\end{pmatrix}$ is in $k$ and is given by\\
$-a_{n+i,i-n}a_{1,j}-a_{n+2,i-n}a_{2,j}-...-a_{2n,i-n}a_{n,j}+a_{1,i-n}a_{1,j}+
a_{2,i-n}a_{2,j}+...+a_{n,i-n}a_{n,j}+a_{1,i-n}a_{n+1,j}+a_{2,i-n}a_{n+2,j}+...+
a_{n,i-n}a_{2n,j}.$\\
Hence the $(i,j)$ position of $\Inn_A(J) + \Inn_A\begin{pmatrix} I & 0\\
I & I\end{pmatrix}$ is\\
$-a_{n+i,i-n}a_{1,j}-a_{n+2,i-n}a_{2,j}-...-a_{2n,i-n}a_{n,j}-a_{n+1,i-n}a_{n+1,j}
-a_{n+2,i-n}a_{n+2,j}-...-a_{2n,i-n}a_{2,j}+a_{1,i-n}a_{n+1,j}+a_{2,i-n}a_{n+2,j}+...+
a_{n,i-n}a_{2n,j}$\\ must reside in $k$.  For $i>n$ the $(i,j)$ entry of $\Inn_A\begin{pmatrix} I & X_{r-n,s-n}\\
0 & I\end{pmatrix}$ is given by\\
$-a_{n+1,i-n}a_{1,j}-a_{n+2,i-n}a_{2,j}-...-a_{2n,i-n}a_{n,j}-a_{n+1,i-n}a_{n+1,j}-a_{n+2,i-n}a_{n+2,j}
-...-(-a_{r,i-n}a_{rj})-a_{r+1,i-n}a_{r+1,j}-...-(-a_{s,i-n}a_{s,j})-a_{s+1,i-n}a_{s+1,j}-...-
a_{2n,i-n}a_{2n,j}+a_{1,i-n}a_{n+1,j}+a_{2,i-n}a_{n+2,j}+...+a_{n,i-n}a_{2n,j}.$\\
Therefore by considering the $(i,j)$ entry of $\Inn_A(J) + \Inn_A\begin{pmatrix} I & 0\\
I & I\end{pmatrix}- \\ \Inn_A\begin{pmatrix} I & X_{r-n,s-n}\\
0 & I\end{pmatrix}$ we see that $-2a_{r,i-n}a_{rj}-2a_{s,i-n}a_{s,j}$ 
must be in $k$.  Since we assumed $i>n$
we have that $a_{rl}a_{rj}+a_{sl}a_{sj}\in k$ for all $l\leq n$ and
$j=1,2,...,2n$.  By combining subcases a and b we obtain
$a_{rl}a_{rj}+a_{sl}a_{sj}\in k$ whenever $r,s> n.$\\

\medskip

\noindent \textbf{CASE 3:}  Suppose $r\leq n$ and $s>n.$

\textbf{Subcase a: } Suppose $i\leq n$.  The matrix $\begin{pmatrix} I & 0\\
X_r & I\end{pmatrix}\in G$ and therefore $\Inn_A\begin{pmatrix} I & 0\\
X_r & I\end{pmatrix}\in G$. Specifically, the $(i,j)$ entry of $\Inn_A\begin{pmatrix}
 I & 0\\ X_r & I\end{pmatrix}$ given by\\
 $a_{1,j}a_{n+1,n+i}+a_{2,j}a_{n+2,n+i}+...+a_{n,j}a_{2n,n+i}-a_{1,n+i}a_{1,j}
 -a_{2,n+i}a_{2,j}-...-(-a_{r,n+i}a_{r,j})-a_{r+1,n+i}a_{r+1,j}-...-a_{n,n+i}a_{n,j}
 -a_{1,n+i}a_{n+1,j}-a_{2,n+i}a_{n+2,j}-...-a_{n,n+i}a_{2n,j}$\\
 
lies in $k$.  Now the $(i,j)$ entry of $\Inn_A(J)+\Inn_A\begin{pmatrix} I & 0\\
X_r & I\end{pmatrix}$, which must be in $k$, is\\
$a_{1,j}a_{n+1,n+i}+a_{2,j}a_{n+2,n+i}+...+a_{n,j}a_{2n,n+i}+2a_{r,n+i}a_{r,j}
+a_{n+1,n+i}a_{n+1,j}+\\
a_{n+2,n+i}a_{n+2,j}+...+a_{2n,n+i}a_{2n,j}-a_{1,n+i}a_{n+1,j}
-a_{2,n+i}a_{n+2,j}-...-a_{n,n+i}a_{2n,j}.$\\
If we now consider the automorphism $\Inn_A$ on the  matrix
$\begin{pmatrix} I & X_{s-n}\\ 0 & I\end{pmatrix}\in G$ then we
see that the $(i,j)$ entry of $\Inn_A\begin{pmatrix} I & X_{s-n}\\ 0
& I\end{pmatrix}$ is given by\\
$a_{1,j}a_{n+1,n+i}+a_{2,j}a_{n+2,n+i}+...+a_{n,j}a_{2n,n+i}
+a_{n+1,n+i}a_{n+1,j}+a_{n+2,n+i}a_{n+2,j}+...+(-a_{s,n+i}a_{s,j})+...+a_{2n,n+i}a_{2n,j}-a_{1,n+i}a_{n+1,j}
-a_{2,n+i}a_{n+2,j}-...-a_{n,n+i}a_{2n,j}.$\\
Hence, the $(i,j)$ entry of  $\Inn_A(J)+\Inn_A\begin{pmatrix} I & 0\\
X_r & I\end{pmatrix}-\Inn_A\begin{pmatrix} I & X_{s-n}\\ 0 &
I\end{pmatrix}$ gives us $2a_{r,n+i}a_{r,j}+2a_{s,n+i}a_{s,j}$ and
more importantly since we assumed $i\leq n$ we have that
$a_{r,l}a_{r,j}+a_{s,l}a_{s,j}\in k$ for all $l>n$ and
$j=1,2,...,2n$.\\

\noindent \textbf{Subcase b: } Suppose $i>n$.  For $i>n$ the $(i,j)$ entry of
$\Inn_A\begin{pmatrix}I & 0\\ X_r & I\end{pmatrix}$ yields\\
$-a_{n+1,i-n}a_{i,j}-a_{n+2,i-n}a_{2,j}-...-a_{2n,i-n}a_{n,j}+a_{1,i-n}a_{1,j}
+a_{2,i-n}a_{2,j}+...+)-a_{r,i-n}a_{r,j}+...+a_{n,i-n}a_{n,j}+a_{1,i-n}a_{n+1,j}+
a_{2,i-n}a_{n+2,j}+...+a_{n,i-n}a_{2n,j}.$\\
 Therefore the $(i,j)$ entry of $\Inn_A(J) +\Inn_A\begin{pmatrix}I & 0\\ X_r & I\end{pmatrix}$
is\\
$-a_{n+1,i-n}a_{i,j}-a_{n+2,i-n}a_{2,j}-...-a_{2n,i-n}a_{n,j}-2a_{r,i-n}a_{r,j}
-a_{n+1,i-n}a_{n+1,j}-a_{n+2,i-n}a_{n+2,j}-...-a_{2n,i-n}a_{2n,j}
a_{1,i-n}a_{n+1,j}+a_{2,i-n}a_{n+2,j}+...+a_{n,i-n}a_{2n,j.}$\\
Lastly we consider the $(i,j)$ entry of $\Inn_A\begin{pmatrix} I &
X_{s-n}\\ 0 & I\end{pmatrix}$ which is given by\\
$-a_{n+1,i-n}a_{i,j}-a_{n+2,i-n}a_{2,j}-...-a_{2n,i-n}a_{n,j}
-a_{n+1,i-n}a_{n+1,j}-a_{n+2,i-n}a_{n+2,j}-...-(-a_{s,i-n}a_{s,j})-...-a_{2n,i-n}a_{2n,j}
a_{1,i-n}a_{n+1,j}+a_{2,i-n}a_{n+2,j}+...+a_{n,i-n}a_{2n,j}.$\\
So the $(i,j)$ entry of $\Inn_A(J)+\Inn_A\begin{pmatrix} I & 0\\
X_r & I\end{pmatrix}-\Inn_A\begin{pmatrix} I & X_{s-n}\\ 0 &
I\end{pmatrix}$ gives us $-2a_{r,i-n}a_{r,j}-2a_{s,i-n}a_{s,j}$
and since $i>n$ we have that $a_{r,l}a_{ar,j}+a_{s,l}a_{s,j}\in k$
for all $l\leq n$ and $j=1,2,...,2n.$\\ Combining subcases a and b we
have that
$a_{rl}a_{rj}+a_{sl}a_{sj}\in k$ whenever $r\leq n$ and $s>n$\\

\noindent In conclusion, by combining Cases 1,2,and 3 we can conclude that
 $a_{r,i}a_{r,j}+a_{s,i}a_{s,j}\in k$ for all $i,j=1,2,...2n$ and $r\neq s$.\\



We are now able to use the fact that
$a_{r,i}a_{r,j}+a_{s,i}a_{s,j}\in k$ for all $i,j=1,2,...2n$ and
$r\neq s$ to show that $a_{r,i}a_{r,j}\in k$ for all
$i,j=1,2,...,2n$.  However, we must show this in two cases. We
will first show that $a_{r,l}a_{r,j}\in k$ for all $l\leq n$ and
then show that $a_{r,l}a_{r,j}\in k$ for all $l>n$. Without loss
of generality it shall suffice to show
$a_{1,l}a_{1,j}\in k$ for all $l$. \\

\noindent \textbf{CASE 1:} Assume $i>n$. The $(i,j)$ entry or $\Inn_A(J)$  is given by\\
$-a_{1,i-n}a_{1,j}-a_{2,i-n}a_{2,j}-...-a_{2n,i-n}a_{2n,j}$ which
is in $k$ and implies that\\
$a_{1,i-n}a_{1,j}+a_{2,i-n}a_{2,j}+...+a_{2n,i-n}a_{2n,j}\in k.$
From our previous argument we know that
$a_{r,i}a_{r,j}+a_{s,i}a_{s,j}\in k$ for all $i,j=1,2,...,2n$, so
obviously $a_{r,i}a_{r,j}+a_{s,i}a_{s,j}\in k$ for $i>n$.  Making
use of that fact the equality given by
$$a_{1,i-n}a_{1,j}=$$
$(a_{1,i-n}a_{1,j}+a_{2,i-n}a_{2,j}+...+a_{2n,i-n}a_{2n,j})
-(1/2)(a_{2,i-n}a_{2,j}+a_{3,i-n}a_{3,j})-\\(1/2)(a_{3,i-n}a_{3,j}+
a_{4,i-n}a_{4,j})-(1/2)(a_{4,i-n}a_{4,j}+a_{5,i-n}a_{5,j})
-...-(1/2)(a_{2n,i-n}a_{2n,j}+a_{2,i-n}a_{2,j})$\\
must be in $k$, ie. $a_{1,i-n}a_{1,j}\in k$.  Since we assumed
that $i>n$ we have that $a_{1,l}a_{1,j}\in k$ for $l\leq n$.
Furthermore, we can conclude that $a_{r,l}a_{r,j}\in k$ for $l\leq
n$\\

\noindent \textbf{CASE 2:}  Assume $i\leq n$.  Then the $(i,j)$ entry of $\Inn_A(J)$,
which is in $k$, is given by
$a_{1,i+n}a_{1,j}+a_{2,i+n}a_{2,j}+...+a_{2n,i+n}a_{2n,j}.$
We again make use of the fact that
$a_{r,i}a_{r,j}+a_{s,i}a_{s,j}\in k$ for $i=1,2,...,2n$, and have
an equality similar to the one in case 1 ($i-n$ is simply replaced
by $i+n$)
$$a_{1,i+n}a_{1,j}=$$
$(a_{1,i+n}a_{1,j}+a_{2,i+n}a_{2,j}+...+a_{2n,i+n}a_{2n,j})
-(1/2)(a_{2,i+n}a_{2,j}+a_{3,i+n}a_{3,j})-\\(1/2)(a_{3,i+n}a_{3,j}+
a_{4,i+n}a_{4,j})-(1/2)(a_{4,i+n}a_{4,j}+a_{5,i+n}a_{5,j})
-...-(1/2)(a_{2n,i+n}a_{2n,j}+a_{2,i+n}a_{2,j})$\\
which again must be in $k$.  Since we assumed $i\leq n$ we have
that $a_{1,l}a_{1,j}\in k$ for $l> n$ and furthermore,
$a_{r,l}a_{r,j}\in k$ for $l>n$.  Combining cases 1 and 2 shows
that $a_{r,l}a_{r,j}\in k$ for $i,j=1,2,...,2n.$\\

\end{enumerate}




I will finally show that $a_{ri}a_{sj}\in k$ for $r\neq s$ \\

\noindent \textbf{CASE I:} Suppose $r,s\leq n$.  Without loss of generality we will
assume that $r<s$.\\

\begin{enumerate}

\item \textbf{Subcase 1}: Suppose $r+s<n+1$.   \\ Let
$Y_{r,s}=\begin{pmatrix} T_ {s+r-1} & 0\\0 &
I_{ n-(s+r-1)}\end{pmatrix}$ and\\
$$Z_{r,s}=\begin{pmatrix} Y_{r,s} & 0\\E_{r,s} &
Y_{r,s}\end{pmatrix}$$ 
 Now $Z_{r,s}\in
G$ and hence $\Inn_A$ must keep $Z_{r,s}$ invariant and thus all the
entries of $\Inn_A(Z_{r,s})$ must lie in$ k$.\\
\begin{enumerate}
\item Assume $i\leq n$.  Then the $(i,j)$ entry of $\Inn_A(Z_{r,s})$ is
given by\\
$-a_{r,n+i}a_{s,j}+a_{n+s+r-1,n+i}a_{1,j}+a_{n+s+r-2,n+i}a_{2,j}+...+\\
a_{n+2,n+i}a_{s+r-2,j}+a_{n+1,n+i}a_{s+r-1,j}-a_{1,n+i}a_{n+s+r-1,j}-
a_{2,n+i}a_{n+s+r-2,j}-...-a_{s+r-2,n+i}a_{n+2,j}-a_{s+r-1,n+i}a_{n+1,j}
+a_{n+r+s,n+i}a_{r+s,j}+a_{n+r+s+1,n+i}a_{r+s+1,j}+...+a_{2n,n+i}a_{n,j}
-a_{r+s,n+i}a_{n+r+s,j}-a_{r+s+1,n+i}a_{n+r+s+1,j}-...-a_{n,n+i}a_{2n,j}.$\\
Let $\bar{Z}_{r,s}\begin{pmatrix} -Y_{r,s} & 0\\E_{r,s} &
-Y_{r,s}\end{pmatrix}$. Now $\bar{Z}_{r,s}\in G$ and thus
$\Inn_A({\bar{Z}}_{r,s})\in G$.  In fact, the $(i,j)$ entry of
$\Inn_A(\bar{Z}_{r,s})$ is the negative of the $(i,j)$ entry of
$\Inn_A{Z}_{r,s}$ with the exception of $-a_{r,n+i}a_{s,j}$ which
remains negative.  Therefore, $\Inn_A({Z_{r,s}})+ \Inn_A(\bar{Z}_{r,s})$
has an $(i,j)$ entry of $-2a_{r,n+i}a_{s,j}$.  Since both
$\Inn_A({Z_{r,s}})$ and $\Inn_A(\bar{Z}_{r,s})$ are both in $G$ their sum
is in $G$ and hence $-2a_{r,n+i}a_{s,j}\in k$.  Since we assumed
$i\leq n$ we have $a_{r,l}a_{s,j}\in k$ for $l>n$.

\item Assume $i>n$.  Then the $(i,j)$ entry of $\Inn_A(Z_{r,s})$ is
given by\\
$a_{r,i-n}a_{s,j}-a_{n+s+r-1,i-n}a_{1,j}-a_{n+s+r-2,i-n}a_{2,j}-...-
a_{n+2,i-n}a_{s+r-2,j}-\\ a_{n+1,i-n}a_{s+r-1,j}+a_{1,i-n}a_{n+s+r-1,j}-
a_{2,i-n}a_{n+s+r-2,j}+...+a_{s+r-2,i-n}a_{n+2,j}+a_{s+r-1,i-n}a_{n+1,j}
+a_{n+r+s,i-n}a_{r+s,j}-a_{n+r+s+1,i-n}a_{r+s+1,j}-...-a_{2n,i-n}a_{n,j}
+a_{r+s,i-n}a_{n+r+s,j}+a_{r+s+1,i-n}a_{n+r+s+1,j}+...+a_{n,i-n}a_{2n,j}$\\
Note that the $(i,j)$ entry of $\Inn_A(Z_{r,s})$ for $i>n$ is the
negative of the $(i,j)$ entry of $\Inn_A(Z_{r,s})$ for $i\leq n$ with
the simple change that $n+i$ becomes $i-n$.  Again we have that
the $(i,j)$ entry of $\Inn_A(\bar{Z}_{r,s})$ is the negative of the
$(i,j)$ entry of $\Inn_A({Z_{r,s}})$ with the exception of
$a_{r,i-n}a_{s,j}$ which remains positive.  Hence as in the
previous case the $(i,j)$ entry of $\Inn_A{Z_{r,s}}+
\Inn_A(\bar{Z}_{r,s})$, gives us $2a_{r,i-n}a_{s,j}\in k$.  Since we
assumed that $i>n$ we can conclude that $a_{r,l}a_{s,j}\in k$ for
$l<n$. \\
 Combining a and b we have that $a_{r,i}a_{s,j}\in k$
for $r+s<n+1.$\\
\end{enumerate}

\item \textbf{Subcase 2}: Suppose $r+s>n+1$. \\  Let 
$$U_{r,s}=\begin{pmatrix} I_{ -n+(r+s-1)} & 0\\0 &
(T_ {2n-(r+s-1)}\end{pmatrix}$$ and
$$V_{r,s}=\begin{pmatrix} U_{r,s} & 0\\E_{r,s} &
U_{r,s}\end{pmatrix}.$$ 
 $V_{r,s}\in G$ so $\Inn_A(V_{r,s})\in G$ since $\Inn_A$ keeps $G$ invariant.

\begin{enumerate}
\item  Suppose $i\leq n$.  Then the $(i,j)$ entry of $\Inn_A(V_{r,s})$
is given by\\
$-a_{r,n+i}a_{s,j}+a_{2n,n+i}a_{s+r-n,j}+a_{2n-1,n+i}a_{s+r-n+1,j}+...+
a_{s+r,n+i}a_{n,j}-\\ a_{2n,j}a_{s+r-n,n+i}-a_{2n-1,j}a_{s+r-n+1,n+i}-...-
a_{s+r,j}a_{n,n+i}+a_{n+1,n+i}a_{1,j}+\\ a_{n+2,n+i}a_{2,j}+...+
a_{r+s-1,n+i}a_{(r+s-1)-n,j}-a_{1,n+i}a_{n+1,j}-a_{2,n+i}a_{n+2,j}-...-
a_{(r+s-1)-n,n+i}a_{r+s-1,j}.$\\
Let $\bar{V}_{r,s}=\begin{pmatrix} -U_{r,s} & 0\\E_{r,s} &
-U_{r,s}\end{pmatrix}.$  Now $\bar{V_{r,s}}\in G$ which implies
that $\Inn_A(\bar{V}_{r,s})\in G$.  The $(i,j)$ entry of
$\Inn_A(\bar{V_{r,s}})$ is the negative of the (i,j) entry of
$\Inn_A(V_{r,s})$ with the exception of the term $-a_{r,n+i}a_{s,j}$
which remains negative.  Hence the $(i,j)$ entry of
$\Inn_A(V_{r,s})+\Inn_A(\bar{V}_{r,s}), -2a_{r,n+i}a_{s,j}$ is in $k$.  Since
we assumed $i\leq n$ we have $a_{r,l}a_{s,j}\in k$ for $l>n$.

\item Assume $i>n$. As in the previous case, the  $(i,j)$ entry of
$\Inn_A(V_{r,s})$ for $i>n$ is the negative of the $(i,j)$ entry of
$\Inn_A(V_{r,s})$ for $i\leq n$ with the simple change that $n+i$
becomes $i-n$.  Again we have that the $(i,j)$ entry of
$\Inn_A(\bar{V}_{r,s})$ is the negative of the $(i,j)$ entry of
$\Inn_A({V_{r,s}})$ with the exception of $a_{r,i-n}a_{s,j}$ which
remains positive.  Hence as in the previous case the $(i,j)$ entry
of $\Inn_A{Z_{r,s}}+ \Inn_A(\bar{Z_{r,s}})$, gives us
$2a_{r,i-n}a_{s,j}$.  Since we assumed that $i>n$ we can conclude
that $a_{r,l}a_{s,j}\in k$ for $l<n$. \\
 Combining a and b we
have that $a_{r,i}a_{s,j}\in k$ for $r+s<n+1$
\end{enumerate}

\item  \textbf{Subcase 3:} Suppose $r+s=n+1$.  Here we choose $W_{r,s}=\begin{pmatrix} T_n & 0\\E_{r,s} & T_n\end{pmatrix}$.  Now
$W_{r,s}\in G$ and hence, $\Inn_A(W_{r,s}) \in G$ since $\Inn_A$ keeps
$G$ invariant.

\begin{enumerate}

\item Suppose $i\leq n$  Then the $(i,j)$ entry of
$\Inn_A(W_{r,s})$ is given by\\
$-a_{r,n+i}a_{s,j}+a_{2n,n+i}a_{1,j}+a_{2n-1,n+i}a_{2,j}+...+
a_{n+1,n+i}a_{n,j}-a_{2n,j}a_{1,n+i}+a_{2n-1,j}a_{2,n+i}+...+
a_{n+1,j}a_{n,n+i}$.\\
Let $\bar{W}_{r,s}= \begin{pmatrix} -T_n & 0\\E_{r,s} &
-T_n\end{pmatrix}$.  $\bar{W}_{r,s}\in G$ which means that
$\Inn_A(\bar{W}_{r,s})\in G$.  The $(i,j)$ entry of
$\Inn_A(\bar{W}_{r,s})$ is the negative of the $(i,j)$ entry of
$\Inn_A(W_{r,s})$ with the exception that the term
$-a_{r,n+i}a_{s,j}$ which remains negative.  Using the fact that
$\Inn_A(W_{r,s})+\Inn_A(\bar{W}_{r,s})\in G$ we have that the term
$-2a_{r,n+i}a_{s,j}\in k$.  However, since we assumed that $i\leq
n$ we have that $a_{r,l}a_{s,j}\in k$ for $l>n$.

\item  The case where $i>n$ follows exactly as above by simply
changing the signs of each term and replacing $n+i$ by $i-n$.\\
\end{enumerate}
\end{enumerate}

Combining Subcases 1,2, and 3 gives us $a_{r,i}a_{s,j}\in k$ for
$r,s>n.$\\

\noindent \textbf{CASE II: }Suppose $r,s>n$.  Without loss of generality assume $r<s$.\\
\begin{enumerate}
\item  \textbf{Subcase 1:} Suppose $r+s-2n<n+1$.\\
  Let $Z'_{r,s}=\begin{pmatrix}
Y_{r-n,s-n} & E_{r-n,s-n}\\0 & Y_{r-n,s-n}\end{pmatrix}$\\

\begin{enumerate}
\item Suppose $i\leq n$, Since $Z'_{r,s}\in G$,
$\Inn_A(Z'_{r,s})$ must lie in $G$ and hence its $(i,j)$ entry of\\
$a_{r,n+i}a_{s,j}+a_{n+s+r-1,n+i}a_{1,j}+a_{n+s+r-2,n+i}a_{2,j}+...+
a_{n+2,n+i}a_{s+r-2,j}+\\ a_{n+1,n+i}a_{s+r-1,j}-a_{1,n+i}a_{n+s+r-1,j}-
a_{2,n+i}a_{n+s+r-2,j}-...-a_{s+r-2,n+i}a_{n+2,j}-a_{s+r-1,n+i}a_{n+1,j}
+a_{n+r+s,n+i}a_{r+s,j}+a_{n+r+s+1,n+i}a_{r+s+1,j}+...+a_{2n,n+i}a_{n,j}
-a_{r+s,n+i}a_{n+r+s,j}-a_{r+s+1,n+i}a_{n+r+s+1,j}-...-a_{n,n+i}a_{2n,j}$\\
is in $k$.  Note that the $(i,j)$ entry of $\Inn_A(Z'_{r,s})$ is
precisely the $(i,j)$ entry of $\Inn_A(Z_{r,s})$ given in part I with
the exception of the first term. Let
$\bar{Z'}_{r,s}=\begin{pmatrix} -Y_{r-n,s-n} & E_{r-n,s-n}\\0 &
-Y_{r-n,s-n}\end{pmatrix}$.   The $(i,j)$ entry of $\Inn_A(\bar{Z'}_{r,s})$
is the negative of the $(i,j)$ entry of $\Inn_A(Z'_{r,s})$ excluding
the term $a_{r,n+i}a_{s,j}$ which remains positive. Hence the
$(i,j)$ entry of $\Inn_A(Z'_{r,s})+\Inn_A(\bar{Z'}_{r,s}), given by
2a_{r,n+i}a_{s,j}$ lies in $k$. Since we assumed $i\leq n$ we have
$a_{r,l}a_{s,j}\in k$ for $l>n$.

\item As in the previous cases, for $i>n$ the proof follows
exactly as above by simply changing the signs of each term and
replacing $n+i$ by $i-n$.  You will get that the $(i,j)$ entry of
$\Inn_A(Z'_{r,s})+\Inn_A(\bar{Z'}_{r,s})$ yields $-2a_{r,i-n}a_{s,j}\in
k$.  Or more specifically, $a_{r,l}a_{s,j}\in k$ for $l\leq n$.\\

Combining a and b gives $a_{r,i}a_{s,j}\in k$ for $r+s-2n<n+1$\\

\end{enumerate}

\item  \textbf{Subcase 2:} Suppose $r+s-2n>n+1$

\begin{enumerate}
\item Let $V'_{r,s}=\begin{pmatrix} U_{r-n,s-n} & E_{r-n,s-n}\\0 &
U_{r-n,s-n}\end{pmatrix}$. Now
$\Inn_A(V'_{r,s})$ must lie in G and hence its $(i,j)$ entry of\\
$a_{r,n+i}a_{s,j}+a_{2n,n+i}a_{s+r-n,j}+a_{2n-1,n+i}a_{s+r-n+1,j}+...+
a_{s+r,n+i}a_{n,j}-\\ a_{2n,j}a_{s+r-n,n+i}-a_{2n-1,j}a_{s+r-n+1,n+i}-...-
a_{s+r,j}a_{n,n+i}+a_{n+1,n+i}a_{1,j}+\\ a_{n+2,n+i}a_{2,j}+...+
a_{r+s-1,n+i}a_{(r+s-1)-n,j}-a_{1,n+i}a_{n+1,j}-a_{2,n+i}a_{n+2,j}-...-
a_{(r+s-1)-n,n+i}a_{r+s-1,j}$\\  must lie in $k$.  If we define
$\bar{V'}_{r,s}=\begin{pmatrix} -U_{r,s} & E_{r-n,s-n}\\0 &
-U_{r,s}\end{pmatrix}$, which is in $G$, then we see that the
$(i,j)$ entry of $\Inn_A(\bar{V'}_{r,s})$ is the negative of the
$(i,j)$ entry of $\Inn_A(V'_{r,s})$ excluding the term
$a_{r,n+i}a_{s,j}$ which remains positive. Hence the $(i,j)$ entry
of $\Inn_A(V'_{r,s})+\Inn_A(\bar{V'}_{r,s}), 2a_{r,n+i}a_{s,j}$ is in$ k$.
Since we assumed $i\leq n$ we have $a_{r,l}a_{s,j}\in k$ for
$l>n$.

\item  Again as in the previous cases, for $i>n$  the proof
follows exactly as above by simply changing the signs of each term
and replacing $n+i$ by $i-n$.  You will get that the $(i,j)$ entry
of $\Inn_A(V'_{r,s})+\Inn_A(\bar{V'}_{r,s})$ yields that the term
$-2a_{r,i-n}a_{s,j}$ is in $k$.  Or more specifically,
$a_{r,l}a_{s,j}\in k$ for $l\leq n.$\\

Combining a and b gives $a_{r,i}a_{s,j}\in k$ for $r+s-2n>n+1$\\

\end{enumerate}

\item \textbf{Subcase 3}: Suppose $r+s-2n=n+1.$  Let $W'_{r,s}=\begin{pmatrix} T_n &
E_{r-n,s-n}\\0 & T_n\end{pmatrix}.$  Now $W'_{r,s}\in G$ and thus
$\Inn_A(W'_{r,s})\in G$.

\begin{enumerate}
\item Suppose $i\leq n$, then the $(i,j)$ entry of $\Inn_A(W'_{r,s})$
is
given by \\
$a_{r,n+i}a_{s,j}+a_{2n,n+i}a_{1,j}+a_{2n-1,n+i}a_{2,j}+...+
a_{n+1,n+i}a_{n,j}-a_{2n,j}a_{1,n+i}+\\ a_{2n-1,j}a_{2,n+i}+...+
a_{n+1,j}a_{n,n+i}$.\\
If we let $\bar{W'}_{r,s}=\begin{pmatrix} -T_n & E_{r-n,s-n}\\0 &
-T_n\end{pmatrix}$, then we see that the $(i,j)$ entry is simply the
negative of the $(i,j)$ entry of $\Inn_A(W'_{r,s})$ excluding the term
$a_{r,n+i}a_{s,j}$ which remains positive. Hence the $(i,j)$ entry
of $\Inn_A(W'_{r,s})+\Inn_A(\bar{W'}_{r,s}), 2a_{r,n+i}a_{s,j}$ is in $k$.
Since we assumed $i\leq n$ we have $a_{r,l}a_{s,j}\in k$ for
$l>n$.

\item  Again as in the previous cases, for $i>n$  the proof
follows exactly as above by simply changing the signs of each term
and replacing $n+i$ by $i-n$.  You will get that the $(i,j)$ entry
of $\Inn_A(W'_{r,s})+\Inn_A(\bar{W'_{r,s}})$ yields that the term
$-2a_{r,i-n}a_{s,j}$ is in $k$.  Or more specifically,
$a_{r,l}a_{s,j}\in k$ for $l\leq n$.\\

Combining a and b gives $a_{r,i}a_{s,j}\in k$ for $r+s-2n>n+1$\\

\end{enumerate}

\end{enumerate}

\noindent \textbf{CASE III: }Suppose $r\leq n$ and $s>n$\\
\begin{enumerate}
\item \textbf{Subcase 1:} Suppose $r+s<2n+1$. \\
 Let
$M_{r,s}=\begin{pmatrix} E_{s-n,r} & Y_{s-n,r}\\-Y_{s-n,r} &
0\end{pmatrix}.$
Now $M_{r,s}\in G$ and thus $\Inn_A(M_{r,s})\in G$ by assumption.
\begin{enumerate}
\item Suppose $i\leq n$.  Now the $(i,j)$ entry of $\Inn_A(M_{r,s})$ is
given by \\
$a_{r,j}a_{s,n+i}+a_{s+r,n+i}a_{s+r,j}+a_{s+r+1,n+i}a_{s+r+1,j}+...+
a_{2n,n+i}a_{2n,j}+\\ a_{s+r-n,n+i}a_{s+r-n,j}+a_{s+r-n+1,n+i}a_{s+r-n+1,j}
+...+a_{n,n+i}a_{n,j}+a_{s+r-1,n+i}a_{n+1,j}+a_{s+r-2,n+i}a_{n+2,j}
+...+a_{n+1,n+i}a_{s+r-1,j}+a_{s-n+r-1,n+i}a_{1,j}+a_{s-n+r-2,n+i}a_{2,j}
+...+a_{1,n+i}a_{s-n+r-1,j}.$\\
Now define $\bar{M}_{r,s}=\begin{pmatrix} E_{s-n,r} &
-Y_{s-n,r}\\Y_{s-n,r} & 0\end{pmatrix}$, then
$\bar{M_{r,s}}\in G$ and therefore $\Inn_A(\bar{M}_{r,s})\in G$.  In
addition the $(i,j)$ entry of $\bar{M}_{r,s}$ is the negative of the
$(i,j)$ entry of $\Inn_A(M_{r,s})$ with the exception of the term
$a_{r,j}a_{s,i+n}$ which remains positive.  The sum $\Inn_A(M_{r,s})+ \Inn_A(\bar{M}_{r,s})\in G$
and thus its $(i,j)$ entry of $2a_{r,j}a_{s,i+n}\in k$.  Since we
assumed that $i\leq n$ this gives us $a_{r,j}a_{s,l}\in k$ for
$l>n$.

\item As in the previous cases, for $i>n$  the proof follows
exactly as above by simply changing the signs of each term and
replacing $n+i$ by $i-n$.  You will get that the $(i,j)$ entry of
$\Inn_A(M_{r,s})+\Inn_A(\bar{M}_{r,s})$ yields that the term
$-2a_{r,j}a_{s,i-n}$ is in $k$. Or more specifically,
$a_{r,j}a_{s,l}\in k$ for $l\leq n.$\\

Combining a and b gives $a_{r,j}a_{s,l}\in k$ for $r+s<2n+1.$\\

\end{enumerate}

\item \textbf{Subcase 2: } Suppose $r+s>2n+1$.   \\
 With $N_{r,s}=\begin{pmatrix} E_{s-n,r} &
U_{s-n,r}\\-U_{s-n,r} & 0\end{pmatrix}$ it is seen that $N_{r,s}\in G$ and thus by assumption
$\Inn_A(N_{r,s})\in G$

\begin{enumerate}
\item Suppose $i\leq n$ then the $(i,j)$ entry of $\Inn_A(N_{r,s})$ is
given by\\
$a_{r,j}a_{s,n+i}+a_{2n,n+i}a_{s+r-n,j}+a_{2n-1,n+i}a_{s+r-n+1,j}
+...+a_{s+r-n,n+i}a_{2n,j}+\\a_{n,n+i}a_{s+r-2n,j}+a_{n-1,n+i}a_{s+r-2n+1,j}
+...+a_{s+r-2n,n+i}a_{n,j}+a_{n+1,n+i}a_{n+1,j}+a_{n+2,n+i}a_{n+2,j}
+...+a_{-n+r+s-1,n+i}a_{-n+r+s-1,j}+a_{1,n+i}a_{1,j}+a_{2,n+i}a_{2,j}
+...+a_{-2n+r+s-1,n+i}a_{-2n+r+s-1,j}$\\
Define $\bar{N}_{r,s}=\begin{pmatrix} E_{s-n,r} &
-U_{s-n,r}\\U_{s-n,r} & 0\end{pmatrix}.$
We again can make use of the fact that $\bar{N}_{r,s}\in G$
implies that $\Inn_A(\bar{N}_{r,s})\in G$.  Now the $(i,j)$ entry of
$\Inn_A(\bar{N}_{r,s})$ is the negative of the $(i,j)$ entry of
$\Inn_A(N_{r,s})$ with the exception of the the term
$a_{r,j}a_{s,n+i}$ which remains positive.  Hence the $(i,j)$ entry of
$\Inn_A(\bar{N}_{r,s})+\Inn_A(N_{r,s})$  given by $2a_{r,j}a_{s,n+i}$
must lie in $k$.  Furthermore, since we assumed that $i\leq n$
we can conclude that $a_{r,j}a_{s,l}\in k$ for $l>n$.

\item As in the previous cases, if $i>n$ the proof follows exactly
as above by simply changing the signs of each term and replacing
$n+i$ by $i-n$.  You will get that the $(i,j)$ entry of
$\Inn_A(N_{r,s})+\Inn_A(\bar{N_{r,s}})$ yields that the term
$-2a_{r,j}a_{s,i-n}$ is in $k$. Or more specifically,
$a_{r,j}a_{s,l}\in k$ for $l\leq n.$\\

Combining a and b gives $a_{r,j}a_{s,l}\in k$ for $r+s<2n+1$\\

\end{enumerate}

\item \textbf{Subcase 3:}  Suppose $r+s=2n+1$.  Let $F_{r,s}=\begin{pmatrix}
E_{s-n,r} &T_n\\T_n & 0\end{pmatrix}$.\\
 Now
$F_{r,s}\in G$ and therefore $\Inn_A(F_{r,s})\in G$ since
$\Inn_A$ keeps G invariant.
\begin{enumerate}
\item Suppose $i\leq n$.  Then the $(i,j)$ entry of
$\Inn_A(F_{r,s})$ is given by\\
$a_{r,j}a_{s,n+i}+a_{1,n+i}a_{1,j}+a_{2,n+i}a_{2,j}+a_{2,n+i}a_{3,j}+...+
a_{2n,n+i}a_{2n,j}$\\
Let $\bar{F}_{r,s}=\begin{pmatrix} E_{s-n,r} & -T_n\\-T_n & 0\end{pmatrix}.$
Then since $\bar{F}_{r,s}\in G$ we have that
$\Inn_A(\bar{F}_{r,s})\in G$.  More importantly, the $(i,j)$ entry of
$\Inn_A(\bar{F}_{r,s})$ is the negative of the $(i,j)$ entry of
$\Inn_A(F_{r,s})$ with the exception that the term
$a_{r,j}a_{s,i+n}$ remains positive.  Again using the fact that
$\Inn_A(\bar{F}_{r,s})+\Inn_A(F_{r,s})\in G$ we have that its
$(i,j)$ entry of $2a_{r,j}a_{s,n+i}\in k$.  Since we assumed $i\leq
n$ we have that $a_{r,j}a_{s,l}\in k$ for $l>n$.\\

\item As in the previous cases, if $i>n$ the proof follows exactly
as above by simply changing the signs of each term and replacing
$n+i$ by $i-n$.  You will get that the $(i,j)$ entry of
$\Inn_A(\bar{F}_{r,s})+\Inn_A(F_{r,s})\in G$ yields that the term
$-2a_{r,j}a_{s,i-n}$ is in $k$. Or more specifically,
$a_{r,j}a_{s,l}\in k$ for $l\leq n$\\

Combining subcases a and b gives us $a_{r,j}a_{s,l}\in k$ for
$r+s=2n+1.$\\

Combining cases 1,2, and 3 gives us $a_{r,i}a_{s,j}\in k$ for
$r\leq n$ and $s>n$.

\end{enumerate}

\end{enumerate}

Cases I, II and III show that $a_{r,i}a_{s,j}\in k$. From this it is clear that $k[a_{is}] = k[a_{jt}]$ for all $i,j,s,t$ (assuming that $a_{is}$ and $a_{jt}$ are both nonzero). So, let $\alpha = a_{is}^2$ where $a_{is}$ is a fixed nonzero entry of $A$. Then, we have shown that all the entries of $A$ are in $k[\sqrt{\alpha}].$ This means that $A \in \Sp(2n,k[\sqrt{\alpha}])$, and all of the entries of $A$ are $k$-multiples of $\sqrt{\alpha}$, as desired. 
\end{proof}


\section{Involutions of $\Sp(2n,k)$}

We now begin to focus on involutions and the classification of their isomorphy classes. We will distinguish different types of involutions. First, we note that for some involutions, $\phi$, there exists $A \in \Sp(2n,k)$ such that $\phi = \Inn_A$, but not in all cases. Sometimes we must settle for $A \in \Sp(2n,k[\sqrt{\alpha}]) \setminus \Sp(2n,k)$.

This is not the only way in which we can distinguish between different types of involutions. If $\Inn_A$ is an involution, then $\Inn_{A^2} = (\Inn_A)^2$ is the identity map. We know from above that this means that $A^2 = \gamma I$ for some $\gamma \in \overline{k}.$ But, we know that $A$ is symplectic. So, $A^2$ is also symplectic. That means that $(A^2)^TJ(A^2) = J$, which implies $(\gamma I)^TJ (\gamma I) = J$, which means $\gamma^2 = 1$. So, $\gamma = \pm 1.$ Thus, we can also distinguish between different types of involutions by seeing if $A^2 = I$ or $A^2 = -I$. This gives the four types of involutions, which are outlined in Table \ref{InvDef}.

\begin{table}[h] 
\centering
\caption {The various possible types of involutions of $\Sp(2n,k)$}  \label{InvDef}
\begin{tabular}[t]{|c||c|c|}
\hline  & $ A \in \Sp(2n,k)$ & $A \in \Sp(2n,k[\sqrt{\alpha}]) \setminus \Sp(n,k)$ \\
\hline  \hline $A^2 = I$ & Type 1 & Type 2 \\ 
\hline  $A^2 = -I$ & Type 3 & Type 4 \\
\hline
\end{tabular}
\end{table}

\subsection{Type 1 Involutions}

We first characterize the matrices that induce Type 1 involutions in the following lemma.

\begin{lem}
\label{Type1Class}
Suppose $\theta$ is a Type 1 involution of $\Sp(2n,k)$. Then, $$A = X \left(\begin{array}{cccc}I_{\frac{s}{2}} & 0 & 0 & 0 \\0 & -I_{\frac{t}{2}} & 0 & 0 \\0 & 0 & I_{\frac{s}{2}} & 0 \\0 & 0 & 0 & -I_{\frac{t}{2}}\end{array}\right) X^{-1}$$ where $s+t =2n$ and $X^TJX = J$. That is, $X \in \Sp(2n,k)$.
\end{lem}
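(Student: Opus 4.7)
The approach is to exploit the fact that a Type~1 involution matrix $A$ is diagonalizable over $k$ with eigenvalues $\pm1$, together with the orthogonality that symplecticity forces between the two eigenspaces, so that one can assemble a symplectic basis exhibiting $A$ in the stated block form.

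First I would set $V_+=\ker(A-I)$ and $V_-=\ker(A+I)$. Since $A^{2}=I$ and $\cha k\neq 2$, these give a $k$-decomposition $k^{2n}=V_+\oplus V_-$. Write $s=\dim V_+$ and $t=\dim V_-$, so $s+t=2n$. The goal then becomes: produce a symplectic change of basis in which the first $s/2$ and third $s/2$ coordinate vectors lie in $V_+$ and the remaining coordinate vectors lie in $V_-$.

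Next I would show that $V_+$ and $V_-$ are orthogonal with respect to the symplectic form $\beta(x,y)=x^{T}Jy$. For $v\in V_+$ and $w\in V_-$, symplecticity of $A$ gives $\beta(v,w)=\beta(Av,Aw)=\beta(v,-w)=-\beta(v,w)$, forcing $\beta(v,w)=0$. Nondegeneracy of $\beta$ on $k^{2n}$ then forces $\beta$ to remain nondegenerate on each eigenspace, and since a nondegenerate alternating form lives only on an even-dimensional space, both $s$ and $t$ must be even. This is the key structural fact needed, and it is really the only non-bookkeeping step of the proof.

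Finally I would pick a symplectic basis $e_{1},\dots,e_{s/2},f_{1},\dots,f_{s/2}$ of $V_+$ and a symplectic basis $e'_{1},\dots,e'_{t/2},f'_{1},\dots,f'_{t/2}$ of $V_-$, both produced by the symplectic Gram--Schmidt procedure (which works over any field of characteristic not $2$). Reordered as
$$(e_{1},\dots,e_{s/2},\,e'_{1},\dots,e'_{t/2},\,f_{1},\dots,f_{s/2},\,f'_{1},\dots,f'_{t/2}),$$
this is a symplectic basis of $k^{2n}$ in the standard sense, so if $X\in \Gl(2n,k)$ has these vectors as its columns in this order, then $X^{T}JX=J$ (i.e.\ $X\in \Sp(2n,k)$), and reading off the action of $A$ on this basis shows that $X^{-1}AX$ equals $\diag(I_{s/2},-I_{t/2},I_{s/2},-I_{t/2})$. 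Equivalently $A=XDX^{-1}$ for $D$ the claimed diagonal matrix. The only nontrivial step is the orthogonality/even-dimension argument above; everything else is a change-of-basis computation.
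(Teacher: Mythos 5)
Your proof is correct, and it reaches the paper's conclusion by a cleaner route than the paper's own argument, though the underlying strategy (diagonalize over $k$, show the eigenspaces are even-dimensional, then build a symplectic basis adapted to the eigenspace decomposition) is the same. The key difference is in how the structure of the form on the eigenspaces is extracted. You prove directly that $V_+$ and $V_-$ are $\beta$-orthogonal via $\beta(v,w)=\beta(Av,Aw)=-\beta(v,w)$, conclude that $\beta$ restricts nondegenerately to each eigenspace, and get evenness of $s$ and $t$ for free from the fact that a nondegenerate alternating form lives only on even-dimensional spaces; you then invoke the existence of a Darboux (symplectic) basis on each summand and simply interleave the two bases. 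The paper never states the orthogonality $\beta(V_+,V_-)=0$ explicitly: it instead derives the block-diagonal shape of $Y^TJY$ from the commutation relation $\diag(I_s,-I_t)\,Y^TJY=Y^TJY\,\diag(I_s,-I_t)$, proves evenness by contradiction (odd-dimensional invertible skew-symmetric blocks cannot exist), and then reduces the Gram matrix to $J$ by an explicit permutation-and-congruence manipulation rather than citing the Darboux theorem. Your version buys brevity and conceptual transparency; the paper's buys an explicit construction of $X$ from an arbitrary $k$-basis via the projections $(A+I)z_i$, which it reuses in the Type 2 case. One small point worth making explicit in your write-up: the eigenspaces $V_\pm=\ker(A\mp I)$ are automatically $k$-rational since $A$ has entries in $k$, so the symplectic Gram--Schmidt does produce basis vectors in $k^{2n}$ as required.
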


\begin{proof}

 Since $\Inn_A^2 = I$ and $A \in \Sp(2n,k)$, then it follows that $A^2 = I$. So, all eigenvalues of $A$ are $\pm 1$. Since there are no repeated roots in the minimal polynomial of $A$, then we see that $A$ is diagonalizable. Let $s = \dim(E(A,1))$ and $t = \dim(E(A,-1))$, and observe that $s+t = 2n$ since $A$ is diagonalizable. We will first show that both $s$ and $t$ must be even. To do this, we proceed by contradiction and assume that $s$ and $t$ are both odd. So, there exists some $Y \in \Gl(n,\overline{k})$ such that $Y^{-1}AY = \left(\begin{smallmatrix}I_s & 0 \\0 & -I_t\end{smallmatrix}\right).$ Since $A$ is symplectic, then it follows that 
\begin{align*}
J &= A^TJA\\ 
&= \left(Y\left(\begin{array}{cc}I_s & 0 \\0 & -I_t\end{array}\right)Y^{-1}\right)^T JY\left(\begin{array}{cc}I_s & 0 \\0 & -I_t\end{array}\right)Y^{-1}\\
&= (Y^{-1})^T\left(\begin{array}{cc}I_s & 0 \\0 & -I_t\end{array}\right)Y^TJY \left(\begin{array}{cc}I_s & 0 \\0 & -I_t\end{array}\right)Y^{-1}.
\end{align*}
 
 This implies that $$ \left(\begin{array}{cc}I_s & 0 \\0 & -I_t\end{array}\right)Y^TJY = (Y^TJY) \left(\begin{array}{cc}I_s & 0 \\0 & -I_t\end{array}\right),$$ where $Y^TJY$ is an invertible skew-symmetric matrix. So, $Y^TJY = \left(\begin{smallmatrix}Y_1 & 0 \\0 & Y_2\end{smallmatrix}\right)$ for some invertible skew symmetric matrices $Y_1$ and $Y_2$, which are $s \times s$ and $t \times t$, respectively. But odd dimensional skew symmetric matrices cannot be invertible, so this is a contradiction. Thus, $s$ and $t$ must be even.

We now wish to construct bases for $E(A,1)$ and $E(A,-1)$ such that all the vectors lie in $k^n$.  Let $\{z_1,...,z_n\}$ be a basis for $k^n$. For each $i$, let $u_i = (A+I)z_i.$ Note that $$Au_i = A(A+I)z_i = (A+I)z_i = u_i.$$ So, $\{u_1,...,u_n\}$ must span $E(A,1)$. Thus, we can appropriately choose $s$ of these vectors and form a basis for $E(A,1)$. Label these basis vectors as $y_1,...,y_{\frac{s}{2}}, y_{n+1},...,y_{n+\frac{s}{2}}$. We can similarly form a basis for $E(A,-1)$. We shall call these vectors $y_{\frac{s}{2}+1},...,y_{n},y_{n+\frac{s}{2}+1},...,y_{2n}$. Let $Y$ be the matrix with the vectors $y_1,...,y_{2n}$ as its columns. Then, by construction, $$Y^{-1}AY =  \left(\begin{array}{cccc}I_{\frac{s}{2}} & 0 & 0 & 0 \\0 & -I_{\frac{t}{2}} & 0 & 0 \\0 & 0 & I_{\frac{s}{2}} & 0 \\0 & 0 & 0 & -I_{\frac{t}{2}}\end{array}\right).$$ We can rearrange to get $$A = Y \left(\begin{array}{cccc}I_{\frac{s}{2}} & 0 & 0 & 0 \\0 & -I_{\frac{t}{2}} & 0 & 0 \\0 & 0 & I_{\frac{s}{2}} & 0 \\0 & 0 & 0 & -I_{\frac{t}{2}}\end{array}\right)Y^{-1}.$$

Recall that $A^T = JAJ^{-1}$, since $A \in \Sp(2n,k)$. So, 

$$\left( Y \left(\begin{array}{cccc}I_{\frac{s}{2}} & 0 & 0 & 0 \\0 & -I_{\frac{t}{2}} & 0 & 0 \\0 & 0 & I_{\frac{s}{2}} & 0 \\0 & 0 & 0 & -I_{\frac{t}{2}}\end{array}\right)Y^{-1} \right)^T = J \left( Y \left(\begin{array}{cccc}I_{\frac{s}{2}} & 0 & 0 & 0 \\0 & -I_{\frac{t}{2}} & 0 & 0 \\0 & 0 & I_{\frac{s}{2}} & 0 \\0 & 0 & 0 & -I_{\frac{t}{2}}\end{array}\right) Y^{-1} \right) J^{-1}.$$

This implies 

$$(Y^{-1})^T \left(\begin{array}{cccc}I_{\frac{s}{2}} & 0 & 0 & 0 \\0 & -I_{\frac{t}{2}} & 0 & 0 \\0 & 0 & I_{\frac{s}{2}} & 0 \\0 & 0 & 0 & -I_{\frac{t}{2}}\end{array}\right)Y^T = JY \left(\begin{array}{cccc}I_{\frac{s}{2}} & 0 & 0 & 0 \\0 & -I_{\frac{t}{2}} & 0 & 0 \\0 & 0 & I_{\frac{s}{2}} & 0 \\0 & 0 & 0 & -I_{\frac{t}{2}}\end{array}\right) (JY)^{-1},$$

which means

$$ \left(\begin{array}{cccc}I_{\frac{s}{2}} & 0 & 0 & 0 \\0 & -I_{\frac{t}{2}} & 0 & 0 \\0 & 0 & I_{\frac{s}{2}} & 0 \\0 & 0 & 0 & -I_{\frac{t}{2}}\end{array}\right) Y^TJY = Y^TJY\left(\begin{array}{cccc}I_{\frac{s}{2}} & 0 & 0 & 0 \\0 & -I_{\frac{t}{2}} & 0 & 0 \\0 & 0 & I_{\frac{s}{2}} & 0 \\0 & 0 & 0 & -I_{\frac{t}{2}}\end{array}\right) .$$

So, $Y^TJY = \left(\begin{smallmatrix}Y_1 & 0 & Y_2 & 0 \\0 & Y_3 & 0 & Y_4 \\-Y_2^T & 0 & Y_5 & 0 \\0 & -Y_4^T & 0 & Y_6\end{smallmatrix}\right)$, where $Y_1$ and $Y_5$ are $\frac{s}{2} \times \frac{s}{2}$ skew-symmetric matrices, $Y_3$ and $Y_6$ are $\frac{t}{2} \times \frac{t}{2}$ skew-symmetric matrices, $Y_2$ is a $\frac{s}{2} \times \frac{s}{2}$ matrix and $Y_4$ is a $\frac{t}{2} \times \frac{t}{2}$ matrix.

We can choose a permutation matrix $Q \in \oo(2n,k)$ such that $$A = YQ\left(\begin{array}{cc}I_s & 0 \\0 & -I_t\end{array}\right) Q^{-1}Y^{-1}$$ and $$Y^TJY = Q \left(\begin{array}{cccc}Y_1 & Y_2 & 0 & 0 \\-Y_2^T & Y_5 & 0 & 0 \\0 & 0 & Y_3 & Y_4 \\0 & 0 & -Y_4^T & Y_6\end{array}\right) Q^{-1}.$$ Let $Y_7 =  \left(\begin{smallmatrix}Y_1 & Y_2 \\-Y_2^T & Y_5\end{smallmatrix}\right)$ and $Y_8 = \left(\begin{smallmatrix}Y_3 & Y_4 \\-Y_4^T & Y_6\end{smallmatrix}\right).$ Note that both $Y_7$ and $Y_8$ are skew-symmetric. We can rearrange the above statement to be $$Q^TY^TJYQ = \left(\begin{array}{cc}Y_7 & 0 \\0 & Y_8\end{array}\right).$$

 It follows that there exists $N = \left(\begin{smallmatrix}N_1 & 0 \\0 & N_2\end{smallmatrix}\right) \in \Gl(n,k)$ such that $$N^TQ^TY^TJYQN = \left(\begin{array}{cccc}0 & I_{\frac{s}{2}} & 0 & 0 \\-I_{\frac{s}{2}} & 0 & 0 & 0 \\0 & 0 & 0 & I_{\frac{t}{2}} \\0 & 0 & -I_{\frac{t}{2}} & 0\end{array}\right).$$ 
 
 We see that we can again use the permutation matrix $Q$ to get  $$QN^TQ^TY^TJYQNQ^T = \left(\begin{array}{cc}0 & I_n \\-I_n & 0\end{array}\right) = J.$$ 
 
 Let $X = YQNQ^T$. Then, 
 \begin{align*}
 X \left(\begin{array}{cccc}I_{\frac{s}{2}} & 0 & 0 & 0 \\0 & -I_{\frac{t}{2}} & 0 & 0 \\0 & 0 & I_{\frac{s}{2}} & 0 \\0 & 0 & 0 & -I_{\frac{t}{2}}\end{array}\right) X^{-1} &= YQNQ^T\left(\begin{array}{cccc}I_{\frac{s}{2}} & 0 & 0 & 0 \\0 & -I_{\frac{t}{2}} & 0 & 0 \\0 & 0 & I_{\frac{s}{2}} & 0 \\0 & 0 & 0 & -I_{\frac{t}{2}}\end{array}\right) (YQNQ^T)^{-1}\\
 &= YQ \left(\begin{array}{cc}N_1 & 0 \\0 & N_2\end{array}\right) \left(\begin{array}{cc}I_s & 0 \\0 &- I_t\end{array}\right) \left(\begin{array}{cc}N_1^{-1} & 0 \\0 & N_2^{-1}\end{array}\right)Q^{-1}Y^{-1}\\
 &= YQ  \left(\begin{array}{cc}I_s & 0 \\0 &- I_t\end{array}\right) Q^{-1}Y^{-1}\\
 &= Y  \left(\begin{array}{cccc}I_{\frac{s}{2}} & 0 & 0 & 0 \\0 & -I_{\frac{t}{2}} & 0 & 0 \\0 & 0 & I_{\frac{s}{2}} & 0 \\0 & 0 & 0 & -I_{\frac{t}{2}}\end{array}\right) Y^{-1}\\
 &= A,
 \end{align*} 
 where $X^TJX = J.$ From this last observation, it follows that $X \in \Sp(2n,k)$.
\end{proof}

Using this characterization, we now find conditions on these involutions that are equivalent to isomorphy.

\begin{theorem}
Suppose $\Inn_A$ and $\Inn_B$ both induce Type 1 involutions for $\Sp(2n,k)$ for $A$ and $B \in \Sp(2n,k)$. Then, $\Inn_A$ and $\Inn_B$ are isomorphic over $\Sp(2n,k)$ if and only if the dimension of $E(A,1)$ equals the dimension of $E(B,1)$ or $E(B,-1)$.
\end{theorem}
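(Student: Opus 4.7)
The plan is to use Lemma \ref{Type1Class} for the reverse direction and Theorem \ref{Inn=Id} for the forward direction, together with the fact that the dimension of $E(A,1)$ (equivalently, of $E(A,-1) = 2n - \dim E(A,1)$) is the only discrete datum appearing in the canonical form $\mathrm{diag}(I_{s/2}, -I_{t/2}, I_{s/2}, -I_{t/2})$ of Lemma \ref{Type1Class}.

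For the forward direction, suppose $\Inn_A$ and $\Inn_B$ are isomorphic via some $P \in \Sp(2n,k)$, so $\Inn_B = \Inn_P^{-1}\Inn_A\Inn_P = \Inn_{P^{-1}AP}$. Applying Theorem \ref{Inn=Id} to $B(P^{-1}AP)^{-1}$, we obtain $B = p P^{-1}AP$ for some scalar $p \in \overline{k}$. Since $A^2 = I$ forces $(P^{-1}AP)^2 = I$, and $B^2 = I$ by assumption, squaring the previous equation gives $p^2 = 1$. If $p = 1$, then $B$ is similar to $A$, so $\dim E(B,1) = \dim E(A,1)$. If $p = -1$, then $B$ is similar to $-A$, so $\dim E(B,1) = \dim E(-A,1) = \dim E(A,-1)$, giving $\dim E(A,1) = \dim E(B,-1)$.

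For the reverse direction, first suppose $\dim E(A,1) = \dim E(B,1) = s$; then automatically $\dim E(A,-1) = \dim E(B,-1) = t = 2n-s$. By Lemma \ref{Type1Class}, we may write
\[
A = X C X^{-1}, \qquad B = Y C Y^{-1}, \qquad C = \begin{pmatrix} I_{s/2} & 0 & 0 & 0 \\ 0 & -I_{t/2} & 0 & 0 \\ 0 & 0 & I_{s/2} & 0 \\ 0 & 0 & 0 & -I_{t/2}\end{pmatrix},
\]
with $X, Y \in \Sp(2n,k)$. Setting $P = XY^{-1} \in \Sp(2n,k)$ gives $B = P^{-1}AP$, so $\Inn_B = \Inn_P^{-1}\Inn_A\Inn_P$, as required. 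If instead $\dim E(A,1) = \dim E(B,-1)$, then $\dim E(A,1) = \dim E(-B,1)$, and $-B \in \Sp(2n,k)$ with $(-B)^2 = I$, so by the case just handled $\Inn_A$ is isomorphic to $\Inn_{-B}$ over $\Sp(2n,k)$. Finally $\Inn_{-B} = \Inn_B$ because the scalar $-1$ cancels in the conjugation formula, so $\Inn_A$ and $\Inn_B$ are isomorphic.

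The one subtle point is ruling out a spurious isomorphy when the dimensions are neither equal nor swapped: here the forward direction handles it, since $p^2 = 1$ forces one of the two possibilities. The reverse direction is essentially routine once Lemma \ref{Type1Class} guarantees that the conjugator bringing each matrix to canonical form lives in $\Sp(2n,k)$ rather than merely in $\Gl(2n,k)$; without that, we would only get conjugacy in $\Gl(2n,k)$, which is insufficient. No part of the argument should pose a real obstacle beyond the bookkeeping of signs.
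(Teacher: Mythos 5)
Your proof is correct and follows essentially the same route as the paper: reduce isomorphy of $\Inn_A$ and $\Inn_B$ to $\Sp(2n,k)$-conjugacy of $A$ with $\pm B$, and use Lemma \ref{Type1Class} to produce the symplectic conjugator $XY^{-1}$ in the reverse direction. Your forward direction is in fact slightly more complete than the paper's, which dismisses that step with ``this argument is easily reversible''; you make explicit the appeal to Theorem \ref{Inn=Id} and the resulting scalar $p$ with $p^2=1$.
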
 

\begin{proof}

We first prove that  $\Inn_A$ is isomorphic to $\Inn_B$ over $\Sp(2n,k)$ is equivalent to $A$ being conjugate to $B$ or $-B$ over $\Sp(2n,k)$. Suppose $A$ is conjugate to $B$ over $\Sp(2n,k)$. Choose $Q \in \Sp(2n,k)$ such that $B = Q^{-1}AQ$. Then, for all $U \in \Sp(2n,k)$, we have 
\begin{align*}
Q^{-1}A^{-1}QUQ^{-1}AQ &= (Q^{-1}AQ)^{-1}U(Q^{-1}AQ)\\
&= B^{-1}UB.
\end{align*} 
So, $(\Inn_Q)^{-1}\Inn_A \Inn_Q = \Inn_B$. That is, $\Inn_A$ is isomorphic to $\Inn_B$ over $\Sp(2n,k)$. Likewise, if $A$ is conjugate to $-B$, then we can show $\Inn_A$ is isomorphic to $\Inn_B$ over $\Sp(2n,k)$. This argument is easily reversible.  

From this, it is clear that if  $\Inn_A$ and $\Inn_B$ are isomorphic over $\Sp(2n,k)$, then the dimension of $E(A,1)$ equals the dimension of $E(B,1)$ or $E(B,-1)$. We need only show the converse.

First, suppose that the dimension of $E(A,1)$ equals the dimension of $E(B,1)$. By the previous lemma, we can choose $X, Y \in \Sp(2n,k)$ such that 

$$X^{-1}AX = \left(\begin{array}{cccc}I_{\frac{s}{2}} & 0 & 0 & 0 \\0 & -I_{\frac{t}{2}} & 0 & 0 \\0 & 0 & I_{\frac{s}{2}} & 0 \\0 & 0 & 0 & -I_{\frac{t}{2}}\end{array}\right) = Y^{-1}BY.$$

Let $Q = XY^{-1}$. Note that $Q \in \Sp(2n,k)$. Then, we have $Q^{-1}AQ = B$, and we have already shown that this implies $\Inn_A$ is isomorphic to $\Inn_B$ over $\Sp(2n,k)$. 

If the dimension of $E(A,1)$ equals the dimension of $E(B,-1)$, then we can similarly show that there exists $Q \in \Sp(2n,k)$ such that $Q^{-1}AQ = - B$, which also implies $\Inn_A$ is isomorphic to $\Inn_B$ over $\Sp(2n,k)$. 

\end{proof}

From this theorem, the number of isomorphy classes of Type 1 involutions is clear. We note that this number is independent of the field $k$.

\begin{cor}
$\Sp(2n,k)$ has $\frac{n}{2}$ or $\frac{n-1}{2}$  isomorphy classes of Type 1 involutions. (Whichever is an integer.)
\end{cor}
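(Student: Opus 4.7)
The plan is to combine the classification lemma (Lemma \ref{Type1Class}) with the isomorphism criterion in the preceding theorem, and then reduce the question to a simple counting problem over unordered pairs of positive even integers.

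First I would observe that by Lemma \ref{Type1Class}, every Type 1 involution $\Inn_A$ arises from a matrix $A$ that is $\Sp(2n,k)$-conjugate to $\diag(I_{s/2},-I_{t/2},I_{s/2},-I_{t/2})$ for some $s,t$ with $s+t=2n$ and both $s,t$ even. The dimension of the $+1$ eigenspace of such a representative is exactly $s$ and of the $-1$ eigenspace is exactly $t$. Since $\Inn_A$ is a bona fide involution (not the identity), Theorem \ref{Inn=Id} forces $A$ to be non-scalar, which rules out $s=0$ and $t=0$. Hence each Type 1 involution determines an unordered pair $\{s,t\}$ of positive even integers with $s+t=2n$.

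Next I would invoke the preceding theorem, which states that $\Inn_A\sim \Inn_B$ over $\Sp(2n,k)$ iff $\dim E(A,1)=\dim E(B,1)$ or $\dim E(A,1)=\dim E(B,-1)$. This is exactly the statement that the isomorphy class of $\Inn_A$ depends only on the unordered pair $\{\dim E(A,1),\dim E(A,-1)\}=\{s,t\}$. Conversely, any unordered pair $\{s,t\}$ of positive even integers with $s+t=2n$ is realized by the canonical representative from Lemma \ref{Type1Class}. Thus the map sending $\Inn_A$ to $\{s,t\}$ is a bijection between isomorphy classes of Type 1 involutions and unordered pairs of positive even integers summing to $2n$.

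The remaining step is purely combinatorial. Write $s=2i$, so that the unordered pair becomes $\{2i,2n-2i\}$ with $i\in\{1,2,\dots,n-1\}$. Two indices $i$ and $i'$ give the same unordered pair iff $i'=n-i$, so a set of representatives is $\{i : 1\le i\le n/2\}$. If $n$ is even, then $i$ ranges over $\{1,2,\dots,n/2\}$ with the pair $\{n,n\}$ counted once, giving $n/2$ classes; if $n$ is odd, then $i$ ranges over $\{1,2,\dots,(n-1)/2\}$, giving $(n-1)/2$ classes. This is precisely the count claimed, and since it only depends on the value of $n \bmod 2$, it is manifestly independent of $k$. I do not expect any genuine obstacle here, as the counting is immediate once the bijection with unordered pairs is in hand.
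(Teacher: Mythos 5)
Your proposal is correct and follows essentially the same route the paper intends: the preceding theorem reduces isomorphy to equality of the unordered pair $\{\dim E(A,1),\dim E(A,-1)\}$, Lemma \ref{Type1Class} shows every unordered pair of positive even integers summing to $2n$ is realized, and the count of such pairs is $\frac{n}{2}$ or $\frac{n-1}{2}$ according to the parity of $n$. The paper leaves this counting implicit ("the number of isomorphy classes is clear"), so your write-up, including the observation that $s,t>0$ because a scalar $A$ would give the identity automorphism, is a faithful and slightly more careful version of the same argument.
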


\subsection{Type 2 Involutions}

We have a similar characterization of the matrices and isomorphy classes in the Type 2 case.  We first prove a result that characterizes the eigenvectors in the Type 2 case.

\begin{lem}

Suppose $A \in \Sp(2n,k[\sqrt{\alpha}],\beta) \setminus \Sp(2n,k,\beta) $ induces a Type 2 involution of $\Sp(n,k,\beta)$ where $\sqrt{\alpha} \not \in k$. Also suppose $x,y \in k^{2n}$ such that $x+\sqrt{\alpha}y \in E(A,-1)$. Then, $x-\sqrt{\alpha}y \in E(A,1)$. Likewise, if $u,v \in k^{2n}$ such that $u+\sqrt{\alpha}v \in E(A,1)$. Then, $u-\sqrt{\alpha}v \in E(A,-1)$. Further, $\dim(E(A,1))= \dim(E(A,-1))$.
\end{lem}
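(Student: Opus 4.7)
The plan is to reduce everything to a single structural fact: for a Type 2 involution, $A$ must satisfy $\sigma(A) = -A$, where $\sigma$ is the nontrivial element of $\gal(k[\sqrt{\alpha}]/k)$. Once that symmetry is in hand, the lemma becomes a matter of applying $\sigma$ componentwise to the defining eigenvector equations.

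First I would establish the identity $\sigma(A) = -A$. Because $\Inn_A$ stabilizes $G = \Sp(2n,k)$, for any $X \in G$ we have $A^{-1}XA \in G$, hence $\sigma(A^{-1}XA) = A^{-1}XA$. Since $\sigma$ fixes entries of $X$ and commutes with matrix multiplication, this gives $\sigma(A)^{-1}X\sigma(A) = A^{-1}XA$, i.e.\ $\Inn_{A\sigma(A)^{-1}}|_G = \Id$. By Theorem \ref{Inn=Id} we obtain $A\sigma(A)^{-1} = pI$ for some $p \in k[\sqrt{\alpha}]^*$, so $A = p\,\sigma(A)$. Squaring gives $A^2 = p^2\sigma(A)^2 = p^2 I$, and since Type 2 requires $A^2 = I$, we deduce $p = \pm 1$. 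If $p = 1$ then $A = \sigma(A)$ forces $A \in \Sp(2n,k)$, contradicting $A \notin \Sp(2n,k)$. Hence $p = -1$ and $\sigma(A) = -A$.

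Next, suppose $x+\sqrt{\alpha}\,y \in E(A,-1)$, so $A(x+\sqrt{\alpha}\,y) = -(x+\sqrt{\alpha}\,y)$. Applying $\sigma$ entrywise to both sides and using $\sigma(Av) = \sigma(A)\sigma(v) = -A\,\sigma(v)$ for any $v \in k[\sqrt{\alpha}]^{2n}$, we get
\[
-A(x - \sqrt{\alpha}\,y) \;=\; -(x - \sqrt{\alpha}\,y),
\]
i.e.\ $A(x-\sqrt{\alpha}\,y) = x - \sqrt{\alpha}\,y$, so $x - \sqrt{\alpha}\,y \in E(A,1)$. The reverse direction (starting from $u+\sqrt{\alpha}\,v \in E(A,1)$) is proved identically, with the sign on the right-hand side flipped.

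Finally, for the dimension statement, the map $\Phi : k[\sqrt{\alpha}]^{2n} \to k[\sqrt{\alpha}]^{2n}$ given by $\Phi(x + \sqrt{\alpha}\,y) = x - \sqrt{\alpha}\,y$ (with $x,y \in k^{2n}$) is a $k$-linear involution. By the two implications just established, $\Phi$ restricts to a bijection $E(A,-1) \to E(A,1)$ with inverse the restriction of $\Phi$ itself. Hence $\dim_k E(A,-1) = \dim_k E(A,1)$, and dividing by $[k[\sqrt{\alpha}]:k] = 2$ yields $\dim E(A,1) = \dim E(A,-1)$.

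The only genuinely nontrivial step is the first one: extracting $\sigma(A) = -A$ from the two defining properties of a Type 2 involution (stability of $G$ under $\Inn_A$, and $A^2 = I$ with $A \notin \Sp(2n,k)$). Everything downstream is a formal consequence of Galois-equivariance.
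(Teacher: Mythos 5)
Your proof is correct and follows essentially the same route as the paper: both arguments apply the conjugation $\sqrt{\alpha}\mapsto-\sqrt{\alpha}$ entrywise to the eigenvector equation after observing that this conjugation sends $A$ to $-A$, and both get the dimension equality by noting that conjugation is an invertible map interchanging the two eigenspaces. The only difference is that you rederive $\sigma(A)=-A$ from Theorem~\ref{Inn=Id} together with $A^2=I$ and $A\notin\Sp(2n,k)$, whereas the paper reads it off from the fact (already established in Theorem~\ref{charthms}) that every entry of $A$ is a $k$-multiple of $\sqrt{\alpha}$; your version is slightly more self-contained but does not change the substance.
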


\begin{proof}

First, we observe that ``$\sqrt{\alpha}$-conjugation," similar to the familiar complex conjugation ($i$-conjugation), preserves multiplication. That is, $$(a+\sqrt{\alpha}b)(c+\sqrt{\alpha}d) = (ac+\alpha bd) + \sqrt{\alpha}(ad+bc)$$ and $$(a-\sqrt{\alpha}b)(c-\sqrt{\alpha}d) = (ac+\alpha bd) - \sqrt{\alpha}(ad+bc).$$ So, ``$\sqrt{\alpha}-$conjugation" will preserve multiplication on the matrix level as well. Because of this and since $$A(x+\sqrt{\alpha}y) = -x-\sqrt{\alpha}y,$$ then it follows that $$(-A)(x-\sqrt{\alpha}) = -x+\sqrt{\alpha}y.$$ We can multiply both sides to see that $$A(x-\sqrt{\alpha}) = x-\sqrt{\alpha}y.$$ That is, $x-\sqrt{\alpha}y \in E(A,1)$. This proves the first statement. An analogous argument proves the second.

To see that $\dim(E(A,1))= \dim(E(A,-1))$ is the case, note that the first statement tells us that $\dim(E(A,1)) \le \dim(E(A,-1))$, and that the second statement tells us that $\dim(E(A,1))\ge \dim(E(A,-1))$, since ``$\sqrt{\alpha}$-conjugation" is an invertible operator on $k[\sqrt{\alpha}]^n$.
\end{proof}

We are now able to characterize the Type 2 involutions. Note that this result combined with our results from the Type 1 case shows that if $n$ is odd, then $\Sp(2n,k)$ will not have any Type 2 involutions.

\begin{lem}
\label{Type2Class}
Suppose $\theta$ is a Type 2 involution of $\Sp(2n,k)$. Let $A$ be the symplectic matrix in $\Sp(2n,k[\sqrt{\alpha}])$ such that $\theta = \Inn_A$. Then, $$A = \frac{\sqrt{\alpha}}{\alpha} X  \left(\begin{array}{cc}0 & I_{n} \\ \alpha I_{n} & 0\end{array}\right)  X^{-1}$$ where $$X  = \left(\begin{array}{cccccccc}x_1 & x_2 & \cdots & x_{n} &y_1 & y_2 & \cdots & y_{n} \end{array}\right)\in \Gl(2n,k),$$ where 
 for each $i$, we have that $x_i+\sqrt{\alpha}y_i \in E(A,1)$ and $x_i-\sqrt{\alpha}y_i \in E(A,-1)$. Further, $$X^TJX = \frac{1}{2} \left(\begin{array}{cc}J & 0 \\0 & \frac{1}{\alpha} J\end{array}\right).$$
 
\end{lem}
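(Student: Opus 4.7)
The plan is to construct $X$ from a well-chosen basis of $E(A,1)$ and verify each claim directly. First, since $A^T J A = J$, for $v \in E(A,1)$ and $w \in E(A,-1)$ one has $v^T J w = v^T A^T J A w = -v^T J w$, so $v^T J w = 0$; that is, $E(A,1)$ and $E(A,-1)$ are $J$-orthogonal. Combined with the previous lemma (giving $\dim_{k[\sqrt{\alpha}]} E(A,1) = \dim_{k[\sqrt{\alpha}]} E(A,-1) = n$ and that $\sqrt{\alpha}$-conjugation exchanges them), nondegeneracy of $J$ on $k[\sqrt{\alpha}]^{2n}$ forces $J$ to restrict to a nondegenerate skew-symmetric form on the $n$-dimensional space $E(A,1)$. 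In particular $n$ is even, recovering the remark that Type 2 involutions do not exist for odd $n$.

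Now I would select a symplectic basis $v_1, \ldots, v_n$ of $E(A,1)$ whose Gram matrix under $(u,w) \mapsto u^T J w$ is the $n\times n$ standard symplectic matrix appearing in the statement (abusing notation and calling this block $J$ as well). Write $v_i = x_i + \sqrt{\alpha}\,y_i$ with $x_i, y_i \in k^{2n}$, and form $X = (x_1 \ \cdots \ x_n \ y_1 \ \cdots \ y_n)$. Because $\{v_i, \bar{v}_i\}$ is a $k[\sqrt{\alpha}]$-basis of $k[\sqrt{\alpha}]^{2n}$ and the coordinate change $x_i = \tfrac{1}{2}(v_i + \bar v_i)$, $y_i = \tfrac{1}{2\sqrt{\alpha}}(v_i - \bar v_i)$ is invertible, $\{x_i, y_i\}$ is also a $k[\sqrt{\alpha}]$-basis, so $\det X \neq 0$; since the entries of $X$ lie in $k$, this forces $X \in \Gl(2n,k)$. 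Adding and subtracting $A v_i = v_i$ and $A \bar v_i = -\bar v_i$ yields $A x_i = \sqrt{\alpha}\, y_i$ and $A y_i = \tfrac{\sqrt{\alpha}}{\alpha}\, x_i$, and reading these as the columns of $AX$ gives $AX = \tfrac{\sqrt{\alpha}}{\alpha}\, X \left(\begin{smallmatrix} 0 & I_n \\ \alpha I_n & 0 \end{smallmatrix}\right)$, from which the claimed matrix expression for $A$ follows on right-multiplying by $X^{-1}$.

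For the final identity, I would expand $v_i^T J \bar v_j$; by $J$-orthogonality of the eigenspaces it vanishes, yielding the pair of relations $x_i^T J x_j = \alpha\, y_i^T J y_j$ and $x_i^T J y_j = y_i^T J x_j$. These collapse the expansion $v_i^T J v_j = (x_i^T J x_j + \alpha y_i^T J y_j) + \sqrt{\alpha}(x_i^T J y_j + y_i^T J x_j)$ to $2 x_i^T J x_j + 2\sqrt{\alpha}\, x_i^T J y_j$, so equating the rational and $\sqrt{\alpha}$-parts of the known value $J_{ij} \in k$ forces $x_i^T J x_j = \tfrac{1}{2} J_{ij}$, $x_i^T J y_j = 0$, and hence $y_i^T J y_j = \tfrac{1}{2\alpha} J_{ij}$; reassembling the blocks of $X^T J X$ then produces the stated identity. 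The main obstacle is the first paragraph: verifying that $(u,w) \mapsto u^T J w$ restricts nondegenerately to $E(A,1)$ is what underwrites both the parity of $n$ and the existence of the symplectic basis on which everything else rests.
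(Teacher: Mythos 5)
Your proof is correct, and its overall skeleton (write eigenvectors of $E(A,1)$ as $x_i+\sqrt{\alpha}y_i$, derive $Ax_i=\sqrt{\alpha}y_i$ and $Ay_i=\tfrac{\sqrt{\alpha}}{\alpha}x_i$ to get the matrix form of $A$, then compute $X^TJX$ by separating rational and $\sqrt{\alpha}$-parts of $\beta(v_i,v_j)$ and $\beta(v_i,\bar v_j)$) matches the paper's. The one genuine divergence is where the symplectic basis of $E(A,1)$ comes from: the paper obtains it by viewing $\Inn_A$ as a Type~1 involution of $\Sp(2n,k[\sqrt{\alpha}])$ and invoking Lemma~\ref{Type1Class} over the extension field, which hands it a matrix $Y$ of eigenvectors with $Y^TJY=J$; you instead prove directly that $E(A,1)$ and $E(A,-1)$ are $J$-orthogonal (from $v^TJw=(Av)^TJ(Aw)=-v^TJw$), deduce that $J$ restricts to a nondegenerate alternating form on the $n$-dimensional space $E(A,1)$, and take a standard symplectic basis there. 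Your route is more self-contained --- it bypasses the fairly involved proof of Lemma~\ref{Type1Class} over $k[\sqrt{\alpha}]$ and replaces it with a two-line orthogonality computation --- and it has the side benefit of making the parity obstruction ($n$ must be even for Type~2 involutions to exist) transparent, whereas in the paper that fact is only observed as a consequence of combining the Type~1 and Type~2 characterizations. The paper's route, in exchange, keeps all the basis constructions funneled through a single lemma, which it then reuses in the Type~3 and Type~4 sections. Both arguments rely on the same two inputs from the preceding lemma (equal dimensions of the eigenspaces and that $\sqrt{\alpha}$-conjugation exchanges them), and your handling of the invertibility of $X$ and of the final Gram-matrix identity is complete.
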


\begin{proof}

We wish to construct bases for $E(A,1)$ and $E(A,-1)$ such that all the vectors lie in $k[\sqrt{\alpha}]^{2n}$. From the previous lemma, we know that $\dim(E(A,1)) = \dim(E(A,-1)) =  n.$ Since $\Inn_A$ is a Type 1 involution of $\Sp(2n,k[\alpha])$, then we can apply Lemma \ref{Type1Class} to find a basis $\{ x_1+\sqrt{\alpha}y_1,...,x_{n}+\sqrt{\alpha}y_{n} \}$ of $E(A,1)$, where $x_1,...,x_{n},y_1,...,y_{n} \in k^{2n}$. By the previous lemma, we know that$\{x_1-\sqrt{\alpha}y_1,...,x_{\frac{n}{2}}-\sqrt{\alpha}y_{\frac{n}{2}}\}$ must be a basis for $E(A,1)$. Further, based on  Lemma \ref{Type1Class}, we can assume that these vectors are chosen so that if 

$$Y = (x_1+\sqrt{\alpha}y_1,...,x_{\frac{n}{2}}+\sqrt{\alpha}y_{\frac{n}{2}}, x_1-\sqrt{\alpha}y_1,...,x_{\frac{n}{2}}-\sqrt{\alpha}y_{\frac{n}{2}}, x_{\frac{n}{2}+1}+\sqrt{\alpha}y_{\frac{n}{2}+1},...$$ $$...,x_{n}+\sqrt{\alpha}y_{n}, x_{\frac{n}{2}+1}-\sqrt{\alpha}y_{\frac{n}{2}+1},...,x_{n}-\sqrt{\alpha}y_{n}),$$

then we know that $$A = Y \left(\begin{array}{cccc}I_{\frac{n}{2}} & 0 & 0 & 0 \\0 & -I_{\frac{n}{2}} & 0 & 0 \\0 & 0 & I_{\frac{n}{2}} & 0 \\0 & 0 & 0 & -I_{\frac{n}{2}}\end{array}\right) Y^{-1}$$ where $Y^TJY = J$.

Let $X  = \left(\begin{smallmatrix}x_1 & x_2 & \cdots & x_{\frac{n}{2}} &y_1 & y_2 & \cdots & y_{\frac{n}{2}} \end{smallmatrix}\right)\in \Gl(n,k).$

We now make a couple of observations. Suppose $u = x+\sqrt{\alpha} y$ is a 1-eigenvector of $A$ such that $x,y \in k^n$. Then, we know $v = x-\sqrt{\alpha} y$ is a $-1$-eigenvector of $A$. Observe that $$Ax = \frac{1}{2}A(u+v) = \frac{1}{2}(u-v) = \sqrt{\alpha} y.$$ It follows from this that $$Ay = \frac{\sqrt{\alpha}}{\alpha}x.$$ 

Since $Ax =  \sqrt{\alpha} y$ and $Ay = \frac{\sqrt{\alpha}}{\alpha}x$, then it follows that $$X^{-1}AX = \left(\begin{array}{cc}0 & \frac{\sqrt{\alpha}}{\alpha}I_{\frac{n}{2}} \\ \sqrt{\alpha}I_{\frac{n}{2}} & 0\end{array}\right).$$ Rearranging this, we see that $$A = \frac{\sqrt{\alpha}}{\alpha} X  \left(\begin{array}{cc}0 & I_{\frac{n}{2}} \\ \alpha I_{\frac{n}{2}} & 0\end{array}\right)  X^{-1}.$$

Now, we need only prove the last statement to prove the Lemma. Since $Y^TJY = J$, then we know that if $1 \le i \le \frac{n}{2}$ and $j \ne \frac{n}{2}+i$, then 

$$0 = \beta(x_i+\sqrt{\alpha}y_i, x_j+\sqrt{\alpha}y_j) = (\beta(x_i, x_j)+\alpha\beta(y_i, y_j))+\sqrt{\alpha}(\beta(x_i,y_j)+\beta(x_j,y_i))$$

and that 

$$0 = \beta(x_i+\sqrt{\alpha}y_i, x_j-\sqrt{\alpha}y_j) = (\beta(x_i, x_j)-\alpha\beta(y_i, y_j))+\sqrt{\alpha}(-\beta(x_i,y_j)+\beta(x_j,y_i)).$$

So, we have that $\beta(x_i, x_j)+\alpha\beta(y_i, y_j) = 0$, $\beta(x_i,y_j)+\beta(x_j,y_i) = 0$, $\beta(x_i, x_j)-\alpha\beta(y_i, y_j) = 0$, and $-\beta(x_i,y_j)+\beta(x_j,y_i) = 0$. It follows from this that when $1 \le i \le \frac{n}{2}$ and $j \ne \frac{n}{2}+i$, we have 

$$\beta(x_i, x_j) = \beta(y_i, y_j) = \beta(x_i, y_j) = \beta(y_i, x_j) = 0.$$

Now suppose that $1 \le i \le \frac{n}{2}$ and $j = \frac{n}{2}+i$. Then, we have 

$$1 = \beta(x_i+\sqrt{\alpha}y_i, x_j+\sqrt{\alpha}y_j) = (\beta(x_i, x_j)+\alpha\beta(y_i, y_j))+\sqrt{\alpha}(\beta(x_i,y_j)+\beta(x_j,y_i))$$

and that 

$$0 = \beta(x_i+\sqrt{\alpha}y_i, x_j-\sqrt{\alpha}y_j) = (\beta(x_i, x_j)-\alpha\beta(y_i, y_j))+\sqrt{\alpha}(-\beta(x_i,y_j)+\beta(x_j,y_i)).$$

Similar to the first case, we have that $\beta(x_i, y_j) = 0 = \beta(y_i, x_j) = 0$, and we have 

$$1 = \beta(x_i, x_j)+\alpha\beta(y_i, y_j)$$

and 

$$0 = \beta(x_i, x_j)-\alpha\beta(y_i, y_j).$$

Thus, when $1 \le i \le \frac{n}{2}$ and $j = \frac{n}{2}+i$, we have that $\beta(x_i, x_j) = \frac{1}{2}$ and $\beta(y_i, y_j) = \frac{1}{2\alpha}$. So, we have that $X^TJX = \frac{1}{2} \left(\begin{smallmatrix}J & 0 \\0 & \frac{1}{\alpha} J\end{smallmatrix}\right).$

\end{proof}

We now consider a couple of examples of Type 2 involutions.

\begin{beisp}
Consider the matrix $$A = \frac{\sqrt{2}}{2}\left(\begin{array}{cccc}1 & 1 & 0 & 0 \\1 & -1 & 0 & 0 \\0 & 0 & 1 & 1 \\0 & 0 & 1 & -1\end{array}\right).$$

$\Inn_A$ is a Type 2 involution of $\Sp(4,\mathbb{Q})$ since $A^2 =I$ and each entry of $A$ is a $\mathbb{Q}$-multiple of $\sqrt{2}$. A basis for $E(A,1)$ that matches the conditions of Lemma \ref{Type2Class} is formed by the vectors $$v_1 =  \left(\begin{array}{c}0 \\ 0 \\ -\frac{1}{8} \\-\frac{1}{8}\end{array}\right)+ \sqrt{2} \left(\begin{array}{c} 0  \\ 0 \\-\frac{1}{8} \\0\end{array}\right)$$ and $$ v_2=\left(\begin{array}{c}  0 \\ 4 \\1 \\1\end{array}\right)+ \sqrt{2} \left(\begin{array}{c}2 \\ -2 \\1 \\0\end{array}\right).$$ 

It can be shown that $$v_3 =  \left(\begin{array}{c}0 \\ 0 \\ -\frac{1}{8} \\-\frac{1}{8}\end{array}\right)-\sqrt{2} \left(\begin{array}{c} 0  \\ 0 \\-\frac{1}{8} \\0\end{array}\right)$$ and $$ v_4=\left(\begin{array}{c}  0 \\ 4 \\1 \\1\end{array}\right)- \sqrt{2} \left(\begin{array}{c}2 \\ -2 \\1 \\0\end{array}\right)$$  are basis vectors for $E(A,-1)$ that also match the conditions of Lemma \ref{Type2Class}.

Following the notation of the previous lemma, we have $$X = \left(\begin{array}{cccc}0 & 0 & 0 & 2 \\0 & 4 & 0 & -2 \\-\frac{1}{8} & 1 & -\frac{1}{8} & 1 \\-\frac{1}{8} & 1 & 0 & 0\end{array}\right),$$ where $X^TJX = \left(\begin{smallmatrix}0 & \frac{1}{2} & 0 & 0 \\-\frac{1}{2} & 0 & 0 & 0 \\0 & 0 & 0 & \frac{1}{4} \\0 & 0 & -\frac{1}{4} & 0\end{smallmatrix}\right)$ and $A = \frac{\sqrt{2}}{2}X\left(\begin{smallmatrix}0 & I_{\frac{n}{2}} \\ 2I_{\frac{n}{2}} & 0\end{smallmatrix}\right) X^{-1}$ .
\end{beisp}

\begin{beisp}
Let $k$ be any field that does not contain $i = \sqrt{-1}$. For example, $k$ could be $\mathbb{R}$, or $\mathbb{F}_p$ or $\mathbb{Q}_p$ where $p$ is congruent to $3 \mod 4$. Consider the matrix $$A = i \left(\begin{array}{cccc}1 & 1 & 0 & 0 \\-2 & -1 & 0 & 0 \\0 & 0 & 1 & -2 \\0 & 0 & 1 & -1\end{array}\right).$$

$\Inn_A$ is a Type 2 involution of $\Sp(4,k)$ since $A^2 =I$ and each entry of $A$ is a $k$-multiple of $i$. A basis for $E(A,1)$ that matches the conditions of Lemma \ref{Type2Class} is formed by the vectors $$v_1 = \left(\begin{array}{c}-\frac{1}{2} \\1 \\1 \\1\end{array}\right)+i\left(\begin{array}{c}\frac{1}{2} \\0 \\-1 \\0\end{array}\right)$$ and $$ v_2=\left(\begin{array}{c}-\frac{1}{2} \\\frac{1}{2} \\1 \\1\end{array}\right)+i\left(\begin{array}{c}0 \\\frac{1}{2} \\-1 \\0\end{array}\right)$$ 

It can be shown that $$v_3 = \left(\begin{array}{c}-\frac{1}{2} \\1 \\1 \\1\end{array}\right)-i\left(\begin{array}{c}\frac{1}{2} \\0 \\-1 \\0\end{array}\right)$$ and $$ v_4=\left(\begin{array}{c}-\frac{1}{2} \\\frac{1}{2} \\1 \\1\end{array}\right)-i\left(\begin{array}{c}0 \\\frac{1}{2} \\-1 \\0\end{array}\right)$$  are basis vectors for $E(A,-1)$ that also match the conditions of Lemma \ref{Type2Class}.

Following the notation of the previous lemma, we have $$X = \left(\begin{array}{cccc}-\frac{1}{2} & -\frac{1}{2} & \frac{1}{2} & 0 \\1 & \frac{1}{2} & 0 & \frac{1}{2} \\1 & 1 & -1 & -1 \\1 & 1 & 0 & 0\end{array}\right),$$ where $X^TJX = \left(\begin{smallmatrix}0 & \frac{1}{2} & 0 & 0 \\-\frac{1}{2} & 0 & 0 & 0 \\0 & 0 & 0 & -\frac{1}{2} \\0 & 0 & \frac{1}{2} & 0\end{smallmatrix}\right)$ and $A = -iXJ X^{-1}$ .
\end{beisp}

Using our characterization of Type 2 involutions, we now find conditions on Type 2 involutions that are equivalent to isomorphy.

\begin{theorem}

Suppose $A$ and $B$ both induce Type 2 involutions of $\Sp(2n,k)$ where we write  $$A = \frac{\sqrt{\alpha}}{\alpha} X  \left(\begin{array}{cc}0 & I_{n} \\ \alpha I_{n} & 0\end{array}\right)  X^{-1}$$ and  $$B = \frac{\sqrt{\beta}}{\beta} Y  \left(\begin{array}{cc}0 & I_{n} \\ \beta I_{n} & 0\end{array}\right)  Y^{-1}$$ where $$X  = \left(\begin{array}{cccccccc}x_1 & x_2 & \cdots & x_{n} &y_1 & y_2 & \cdots & y_{n} \end{array}\right)\in \Gl(2n,k)$$ and $$Y  = \left(\begin{array}{cccccccc}\tilde{x_1} & \tilde{x_2} & \cdots & \tilde{x}_{n} &\tilde{y}_1 & \tilde{y}_2 & \cdots & \tilde{y}_{n} \end{array}\right)\in \Gl(2n,k),$$ where 
 for each $i$, we have that $x_i+\sqrt{\alpha}y_i \in E(A,1)$, $x_i-\sqrt{\alpha}y_i \in E(A,-1)$,  $\tilde{x}_i+\sqrt{\alpha}\tilde{y}_i \in E(B,1)$, $\tilde{x}_i-\sqrt{\alpha}\tilde{y}_i \in E(B,-1),$ and we know  that $$X^TJX = \frac{1}{2} \left(\begin{array}{cc}J & 0 \\0 & \frac{1}{\alpha} J\end{array}\right)$$ and $$ Y^TJY =  \frac{1}{2} \left(\begin{array}{cc}J & 0 \\0 & \frac{1}{\beta} J\end{array}\right).$$
 Then, $\Inn_A$ and $\Inn_B$ are isomorphic over $\Sp(2n,k)$ if and only $\alpha$ and $\beta$ lie in the same square class of $k$.

\end{theorem}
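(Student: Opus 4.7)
\medskip

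\noindent\textbf{Proof proposal.}
The plan is to reduce the isomorphy statement to a matrix identity and then chase the quadratic data through that identity. First, I would establish that an isomorphism between the inner involutions $\Inn_A$ and $\Inn_B$ is the same as a conjugation identity $Q^{-1}AQ=\pm B$ for some $Q\in\Sp(2n,k)$. Indeed, if $\Inn_Q^{-1}\Inn_A\Inn_Q=\Inn_B$ then $\Inn_{Q^{-1}AQ}\cdot\Inn_B^{-1}$ is the identity on $\Sp(2n,k)$, so Theorem~\ref{Inn=Id} gives $Q^{-1}AQ=pB$ for some scalar $p\in\bar k$; squaring and using $A^2=B^2=I$ (Type~2) forces $p^2=1$, hence $p=\pm1$. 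This reduction is the conceptual backbone of the argument, analogous to the opening of the Type~1 isomorphy theorem.

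For the forward direction I would exploit the fact, built into Theorem~\ref{charthms} and the Type~2 hypothesis, that $A=\sqrt{\alpha}\,A'$ and $B=\sqrt{\beta}\,B'$ with $A',B'\in\M(2n,k)$ and $\sqrt{\alpha},\sqrt{\beta}\notin k$. Since $Q\in\Sp(2n,k)\subset\M(2n,k)$, the matrix $Q^{-1}AQ=\sqrt{\alpha}(Q^{-1}A'Q)$ still has every entry a $k$-multiple of $\sqrt{\alpha}$, while $\pm B$ has every entry a $k$-multiple of $\sqrt{\beta}$. Equating any single nonzero entry of $Q^{-1}AQ=\pm B$ yields $c\sqrt{\alpha}=d\sqrt{\beta}$ for nonzero $c,d\in k$, and squaring gives $\beta=(c/d)^2\alpha$, so $\alpha$ and $\beta$ represent the same square class in $k^*/(k^*)^2$.

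For the backward direction, assume $\beta=c^2\alpha$ for some $c\in k^*$, and write $A=XP_\alpha X^{-1}$, $B=YP_\beta Y^{-1}$ in the form supplied by Lemma~\ref{Type2Class}, where $P_\alpha=\left(\begin{smallmatrix}0 & \alpha^{-1/2}I_n\\ \alpha^{1/2}I_n & 0\end{smallmatrix}\right)$. The explicit intertwining candidate is
\[
D=\begin{pmatrix} I_n & 0 \\ 0 & c^{-1}I_n \end{pmatrix}\in\Gl(2n,k),\qquad R=XDY^{-1}.
\]
A short block computation shows $D^{-1}P_\alpha D=P_\beta$, so $R^{-1}AR=B$, and it only remains to check $R\in\Sp(2n,k)$. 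Since $X,Y,D$ all have entries in $k$ this is automatic on the level of the field; for the symplectic condition one computes, using the normalizations $X^{T}JX=\tfrac12\left(\begin{smallmatrix}J & 0\\0 & \alpha^{-1}J\end{smallmatrix}\right)$ and $Y^{T}JY=\tfrac12\left(\begin{smallmatrix}J & 0\\0 & \beta^{-1}J\end{smallmatrix}\right)$ from Lemma~\ref{Type2Class},
\[
D^{T}X^{T}JX\,D=\tfrac12\begin{pmatrix} J & 0\\ 0 & c^{-2}\alpha^{-1}J \end{pmatrix}=\tfrac12\begin{pmatrix} J & 0\\ 0 & \beta^{-1}J \end{pmatrix}=Y^{T}JY,
\]
which immediately yields $R^{T}JR=J$. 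Hence $\Inn_R\in\Inn(\Sp(2n,k))$ witnesses the isomorphism.

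The bulk of the work is mechanical matrix algebra; the only genuinely delicate point is the coordinated use of Lemma~\ref{Type2Class}'s formula for $X^{T}JX$ on both sides, which is what makes the conjugating element $D$ land in $\Sp(2n,k)$ rather than merely in $\Gl(2n,k)$. That is where the square-class hypothesis $\beta=c^{2}\alpha$ gets consumed, and it is the step I would present most carefully.
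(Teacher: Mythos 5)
Your proposal is correct, and its core --- proving the backward direction by conjugating with a matrix of the form $X(\cdot)Y^{-1}$ whose symplecticity is forced by the normalizations of $X^TJX$ and $Y^TJY$ --- is exactly the paper's argument. You do strictly more work in two places, both to the good. First, the paper simply reduces to $\alpha=\beta$ without loss of generality and takes $Q=XY^{-1}$; you keep $\beta=c^2\alpha$ and insert the diagonal matrix $D=\diag(I_n,c^{-1}I_n)$, which makes that reduction explicit (note that $D^{-1}P_\alpha D=P_\beta$ only up to the choice of square root $\sqrt{\beta}=\pm c\sqrt{\alpha}$, which at worst replaces $B$ by $-B$; this is harmless since $\Inn_{-B}=\Inn_B$, or since your own reduction already allows $Q^{-1}AQ=\pm B$). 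Second, and more substantively, the paper dispatches the forward direction with the single sentence ``we note that this can be the case if and only if $\alpha$ and $\beta$ are in the same square class,'' whereas you actually prove it: reduce isomorphy to $Q^{-1}AQ=pB$ via Theorem \ref{Inn=Id}, force $p=\pm1$ from $A^2=B^2=I$, then compare one nonzero entry, a $k$-multiple of $\sqrt{\alpha}$ on one side and of $\sqrt{\beta}$ on the other. One small caution there: an element of $\Inn(\Sp(2n,k))$ need not be $\Inn_Q$ for $Q\in\Sp(2n,k)$ --- by Theorem \ref{charthms} the conjugator may only lie in $\Sp(2n,k[\sqrt{\gamma}])$ with all entries $k$-multiples of some $\sqrt{\gamma}$ --- but your entry comparison survives this, since conjugating a $k$-multiple of $\sqrt{\alpha}$ by such a $Q$ yields entries built from $\sqrt{\gamma}\cdot\sqrt{\alpha}\cdot\sqrt{\gamma}=\gamma\sqrt{\alpha}$, still $k$-multiples of $\sqrt{\alpha}$. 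With that caveat acknowledged, the argument stands as written and is in fact more complete than the one in the paper.
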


\begin{proof}

First, we note that if there exists $Q \in \Sp(2n, k)$ such that $Q^{-1}AQ = B$, then $\Inn_A$ and $\Inn_B$ are isomorphic over $\Sp(2n, k)$. Secondly, we note that this can be the case if and only if $\alpha$ and $\beta$ are in the same square class. So, to prove this theorem, we can simply assume that $\alpha = \beta$ and we will show that there exists such a a $Q \in \Sp(2n,k)$. 

Let $Q = XY^{-1}$. First, we note that $$Q^TJQ = (XY^{-1})^TJ(XY^{-1}) = (Y^{-1})^T (X^TJX)Y^{-1} = (Y^{-1})^T (Y^TJY)Y^{-1} = J,$$ so we see that $Q \in \Sp(2n,k)$. 

Lastly, we see that 
\begin{align*}
Q^{-1}AQ &= (XY^{-1})^{-1}A(XY^{-1})\\
&= Y(X^{-1}AX)Y^{-1}\\
&= \frac{\sqrt{\alpha}}{\alpha} Y  \left(\begin{array}{cc}0 & I_{n} \\ \alpha I_{n} & 0\end{array}\right)  Y^{-1}\\ 
&= B.
\end{align*}
\end{proof}

From here, it is clear that the number of Type 2 involution isomorphy classes is dependent on $n$ and on the number of square classes of the field $k$.

\begin{cor}
If $n$ is even, then $\Sp(2n,k)$ has at most $|k^*/(k^*)^2|-1$ isomorphy classes of Type 2 involutions. If $n$ is odd, then $\Sp(2n,k)$ has no Type 2 involutions. 
\end{cor}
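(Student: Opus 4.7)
The plan is to treat the two parities of $n$ separately, with each case being a short application of results already established in the paper. No new machinery is needed; the corollary is essentially a bookkeeping consequence of the preceding theorem and of Lemma \ref{Type1Class}.

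For odd $n$, the argument is by contradiction and was flagged in the remark preceding Lemma \ref{Type2Class}. Suppose $\theta = \Inn_A$ is a Type 2 involution of $\Sp(2n,k)$ with $A \in \Sp(2n, k[\sqrt{\alpha}])$. Then $A^2 = I$, so $\Inn_A$ is in particular an involution (hence a Type 1 involution) of $\Sp(2n, k[\sqrt{\alpha}])$. By the eigenspace lemma just before Lemma \ref{Type2Class}, we have $\dim E(A,1) = \dim E(A,-1) = n$. But Lemma \ref{Type1Class} applied over the field $k[\sqrt{\alpha}]$ forces both these eigenspace dimensions to be even. This contradicts $n$ being odd, so $\Sp(2n,k)$ admits no Type 2 involutions in this case.

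For even $n$, I would invoke the preceding theorem directly. It states that two Type 2 involutions $\Inn_A$ and $\Inn_B$, with associated parameters $\alpha$ and $\beta$, are isomorphic over $\Sp(2n,k)$ if and only if $\alpha$ and $\beta$ represent the same class in $k^*/(k^*)^2$. Consequently, the isomorphy classes of Type 2 involutions inject into the set of square classes of $k$. The trivial square class (the class of $1$) must be excluded: if $\alpha \in (k^*)^2$ then $\sqrt{\alpha} \in k$, so $A \in \Sp(2n, k[\sqrt{\alpha}]) = \Sp(2n,k)$, which would make $\Inn_A$ a Type 1 rather than a Type 2 involution. This leaves at most $|k^*/(k^*)^2| - 1$ possible square classes, and hence at most that many isomorphy classes.

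There is no serious obstacle here. The only point that requires care is the careful use of the quantifier ``at most'': the bound is not necessarily sharp because we have not verified that every non-trivial square class $\alpha$ actually gives rise to a genuine Type 2 involution of $\Sp(2n,k)$ (existence of a suitable $X \in \Gl(2n,k)$ with $X^TJX = \tfrac{1}{2}\left(\begin{smallmatrix} J & 0 \\ 0 & \tfrac{1}{\alpha}J\end{smallmatrix}\right)$ does hold by Witt's theorem on non-degenerate skew forms, but the resulting $A$ may or may not land in the required coset). Writing ``at most'' sidesteps this subtlety, which is later resolved on a field-by-field basis when $k$ is specialized to $\mathbb{R}$, a finite field, or $\overline{k}$.
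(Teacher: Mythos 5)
Your proof is correct and follows essentially the same route the paper intends: the odd case comes from combining the eigenspace lemma ($\dim E(A,1)=\dim E(A,-1)=n$) with the parity constraint of Lemma \ref{Type1Class} applied over $k[\sqrt{\alpha}]$ (exactly the argument the paper sketches in the remark before Lemma \ref{Type2Class}), and the even case reads the bound off the preceding isomorphy theorem after excluding the trivial square class. Your closing remark on why ``at most'' is the right quantifier is a sensible clarification consistent with the paper's later field-by-field verification of sharpness.
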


\subsection{Type 3 Involutions}

We now examine the Type 3 case. Recall that $\phi$ is a Type 3 involution if $\phi = \Inn_A$, where $A \in \Sp(2n,k)$ and $A^2 = -I$. Such matrices have eigenvalues $\pm i$, and are diagonalizable because the minimal polynomial has no repeated roots. We begin by proving a couple of results about the eigenvectors of such matrices.

\begin{lem}

Suppose $A \in \Sp(2n,k) $ induces a Type 3 involution of $\Sp(2n,k)$. Also suppose $x,y \in k^n$ such that $x+iy \in E(A,-i)$. Then, $x-iy \in E(A,i)$. Likewise, if $u,v \in k^n$ such that $u+iv \in E(A,i)$. Then, $u-iv \in E(A,-i)$. Further, $\dim(E(A,i))= \dim(E(A,-i))$.

\end{lem}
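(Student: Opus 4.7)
The plan is to mirror the preceding Type 2 lemma, using ordinary complex conjugation in place of ``$\sqrt{\alpha}$-conjugation.'' The essential observation is that $A$ has entries in $k$, so the $k$-linear map $w+iz \mapsto w-iz$ on $k[i]^n$ commutes with multiplication by $A$. Equivalently, for any $w,z \in k^n$, the vector $Aw + iAz$ decomposes into its ``real'' part $Aw \in k^n$ and ``imaginary'' part $Az \in k^n$.

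First I would unpack the hypothesis $A(x+iy) = -i(x+iy)$. Expanding both sides gives $Ax + iAy = y - ix$, and because $Ax, Ay \in k^n$ and $\{1,i\}$ is $k$-linearly independent in $k[i]$, I can match components to obtain $Ax = y$ and $Ay = -x$. Then a direct computation yields
\[
A(x-iy) = Ax - iAy = y - i(-x) = y + ix = i(x-iy),
\]
which shows $x-iy \in E(A,i)$. The second statement follows from a symmetric computation: if $u+iv \in E(A,i)$, then matching components in $A(u+iv)=i(u+iv)=-v+iu$ gives $Au = -v$ and $Av = u$, whence $A(u-iv) = -v - iu = -i(u-iv)$, so $u-iv \in E(A,-i)$.

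For the dimension equality, I would observe that conjugation $\sigma : k[i]^n \to k[i]^n$, $w+iz \mapsto w - iz$, is a $k$-linear involution (hence a $k$-linear bijection) which, by the two statements just proved, restricts to mutually inverse $k$-linear bijections between $E(A,-i)$ and $E(A,i)$. To upgrade this to an equality of $k[i]$-dimensions, I would take a $k[i]$-basis $\{v_1,\dots,v_m\}$ of $E(A,-i)$ and check that $\{\sigma(v_1),\dots,\sigma(v_m)\}$ is a $k[i]$-basis of $E(A,i)$: spanning follows since $\sigma$ is a bijection onto $E(A,i)$, and linear independence follows because any relation $\sum c_j \sigma(v_j)=0$ yields, after applying $\sigma$, the relation $\sum \sigma(c_j) v_j = 0$, forcing each $\sigma(c_j)=0$ and hence each $c_j=0$.

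There is no real obstacle here; the argument is a near-verbatim transcription of the Type 2 lemma, with the added convenience that $A$ itself already has entries in $k$, so no passage through a larger field is needed for the matrix operations.
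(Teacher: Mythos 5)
Your proof is correct and follows essentially the same route as the paper's: both rest on the observation that the conjugation $w+iz\mapsto w-iz$ commutes with multiplication by $A$ because $A$ has entries in $k$, and you simply unpack this into the explicit component relations $Ax=y$, $Ay=-x$ (respectively $Au=-v$, $Av=u$) and make the dimension count slightly more careful by checking that the semilinear bijection carries a basis to a basis. The one caveat --- shared with the paper, whose use of conjugation requires the same thing --- is that matching ``real'' and ``imaginary'' parts presupposes $i\notin k$, which is the situation in which the lemma is actually invoked.
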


\begin{proof}

Recall that complex conjugation preserves multiplication. This applies at the matrix level as well as at the scalar level. Because of this and since $$A(x+iy) = -i(x-iy) = y-ix,$$ then it follows that $$A(x-iy) = y+ix = i(x-iy).$$ That is, $x-iy \in E(A,-i)$. This proves the first statement. An analogous argument proves the second.

To see that $\dim(E(A,i))= \dim(E(A,-i))$ is the case, note that the first statement tells us that $\dim(E(A,i)) \le \dim(E(A,-i))$, and that the second statement tells us that $\dim(E(A,i))\ge \dim(E(A,-i))$.
\end{proof}

\begin{lem}
\label{Type3Eigen}
Suppose $A \in \Sp(2n,k) $ induces a Type 3 involution of $\Sp(2n,k)$. Then, there exists $x_1,...,x_n, y_1,...,y_n \in k^{2n}$ such that the $x_j+iy_j$ form a basis for  $E(A,i)$ and the $x_j-iy_j$ form a basis for  $E(A,-i)$.
\end{lem}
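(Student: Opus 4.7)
The plan is to split into two cases according to whether $i \in k$ or not. In both cases, the preceding lemma (together with the fact that $A$ is diagonalizable over $k(i)$ with minimal polynomial $t^2+1$) gives $\dim E(A,i) = \dim E(A,-i) = n$.

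First, suppose $i \notin k$, so that $k(i)$ is a quadratic extension of $k$ with $k$-basis $\{1,i\}$. Choose any $k(i)$-basis $v_1,\ldots,v_n$ of $E(A,i)$ and write each $v_j = x_j + iy_j$ uniquely with $x_j, y_j \in k^{2n}$. By the preceding lemma, $x_j - iy_j \in E(A,-i)$ for each $j$. To see these form a $k(i)$-basis of $E(A,-i)$, suppose $\sum a_j(x_j - iy_j) = 0$ with $a_j \in k(i)$. Applying the $k$-linear "$i$-conjugation" map $\sigma\colon a+ib \mapsto a-ib$ (which exchanges $E(A,i)$ and $E(A,-i)$ by the preceding lemma) gives $\sum \overline{a_j}(x_j+iy_j) = 0$, i.e. $\sum \overline{a_j}\, v_j = 0$, forcing each $\overline{a_j} = 0$ and hence each $a_j = 0$. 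So $\{x_j - iy_j\}$ is linearly independent over $k(i)$, and by dimension count it spans $E(A,-i)$.

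Next, suppose $i \in k$. Then eigenvectors of $A$ for $\pm i$ can be chosen in $k^{2n}$ directly: pick bases $u_1,\ldots,u_n$ of $E(A,i)$ and $w_1,\ldots,w_n$ of $E(A,-i)$ inside $k^{2n}$. Define
\[
x_j \;=\; \tfrac{1}{2}(u_j + w_j), \qquad y_j \;=\; \tfrac{1}{2i}(u_j - w_j),
\]
both of which lie in $k^{2n}$. Then $x_j + iy_j = u_j$ and $x_j - iy_j = w_j$, so the two families give bases of $E(A,i)$ and $E(A,-i)$ as required.

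The main obstacle is the first case, where one has to be careful with the distinction between the $k$-linear structure (on which conjugation $\sigma$ acts) and the $k(i)$-linear structure of the eigenspaces; the $k(i)$-independence of the conjugated basis does not follow from $k$-linearity of $\sigma$ alone, but requires the conjugation trick above. The second case is essentially a change-of-basis computation once bases of both eigenspaces over $k$ are available, and the fact that $\dim E(A,i) = \dim E(A,-i) = n$ (already established in the previous lemma and in the symplectic eigenvalue-pairing discussion) guarantees that both halves of the construction have the same length $n$.
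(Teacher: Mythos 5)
Your proof is correct, but it takes a different route from the paper's. The paper gives a single, case-free construction: starting from a $k$-basis $z_1,\dots,z_{2n}$ of $k^{2n}$, it forms $u_j=(A+iI)z_j$, notes that each $u_j$ lies in $E(A,i)$ and that these vectors span $E(A,i)$, extracts $n$ of them as a basis, and then reads off $x_j=Az_j$, $y_j=z_j$, so the ``real and imaginary parts'' land in $k^{2n}$ automatically because $A$ has entries in $k$. Your argument instead splits on whether $i\in k$. In the case $i\notin k$ you use the $k$-module decomposition $k(i)^{2n}=k^{2n}\oplus i\,k^{2n}$ to extract $x_j,y_j$ from an arbitrary $k(i)$-basis of $E(A,i)$, and then verify via the semilinear conjugation map $\sigma$ that the conjugated vectors are $k(i)$-independent; in the case $i\in k$ you take bases of both eigenspaces inside $k^{2n}$ and solve for $x_j,y_j$ directly. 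Each approach has something to recommend it: the paper's operator trick avoids the case split entirely and produces the $k$-rational decomposition for free, whereas your treatment is more careful on a point the paper elides --- the paper never actually checks that the conjugates $x_j-iy_j$ are linearly independent in $E(A,-i)$, while your $\sigma$-argument (and, in the $i\in k$ case, the explicit change of basis) supplies exactly that verification. Your handling of the $i\in k$ case also has the virtue of not leaning on the conjugation lemma in a situation where ``$i$-conjugation'' is degenerate.
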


\begin{proof}

Since $\Inn_A$ is Type 3, then we are assuming that $A \in \Sp(2n,k)$ and $A^2 = -I$. It follows that all eigenvalues of $A$ are $\pm i$. Since there are no repeated roots in the minimal polynomial of $A$, then we see that $A$ is diagonalizable. We wish to construct bases for $E(A,i)$ and $E(A,-i)$ such that all the vectors lie in $k[i]^{2n}$.  Let $\{z_1,...,z_{2n}\}$ be a basis for $k^{2n}$. For each $j$, let $u_j = (A+iI)z_j.$ Note that $$Au_j = A(A+iI)z_j = (A^2+iA)z_j = (-I+iA)z_j= i(A+iI)z_j= iu_j.$$ So, $\{u_1,...,u_{2n}\}$ must span $E(A,i)$. Thus, we can appropriately choose $n$ of these vectors and form a basis for $E(A,i)$. We can reorder, and assume that the $n$ chosen vectors are $u_1,...,u_n$. Let $x_j = Ax_j$ and $y_j = z_j$. Then, these eigenvectors are of the form $x_j+iy_j$. By the previous lemma, we know that $x_j-iy_j \in E(A,-i)$. This proves the statement.

\end{proof}

We are now able to prove results that characterize the matrices that induce Type 3 involutions, and then use these characterizations to find conditions on these involutions that are equivalent to isomorphy. We will have to prove this by looking at separate cases depending on whether or not $i = \sqrt{-1}$ lies in $k$. We begin by assuming that $i \in k$.

\begin{lem}
\label{Type3ClassYes}
Assume $i \in k$ and suppose $\theta = \Inn_A$ is a Type 3 involution of $\Sp(2n,k)$, where $A \in \Sp(2n,k)$. Then, $A = X \left(\begin{smallmatrix}iI_{n} &0 \\ 0& -iI_{n}\end{smallmatrix}\right) X^{-1}$ for some $X \in \Gl(n,k),$ where $X^TJX = \left(\begin{smallmatrix}0 & X_1 \\-X_1 & 0\end{smallmatrix}\right)$ where $X_1$ is diagonal.
\end{lem}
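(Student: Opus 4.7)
The plan is to diagonalize $A$ over $k$ and then adjust the resulting change-of-basis matrix so that $X^TJX$ takes the desired skew-diagonal form. Since $A^2 = -I$ and $i \in k$, the minimal polynomial of $A$ divides $(t-i)(t+i)$, so $A$ is diagonalizable over $k$ with eigenvalues $\pm i$. Setting $s = \dim E(A,i)$ and $t = \dim E(A,-i)$, I would choose $Y \in \Gl(2n,k)$ with $Y^{-1}AY = \diag(iI_s, -iI_t)$. This is an honest $k$-diagonalization rather than a $\bar k$-one precisely because $i \in k$.

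Next I would exploit the symplectic relation $A^TJA = J$. Substituting the diagonal form for $A$, this becomes
\[
\diag(iI_s, -iI_t)\,(Y^TJY)\,\diag(iI_s, -iI_t) = Y^TJY.
\]
Writing $Y^TJY = \bigl(\begin{smallmatrix}P_1 & P_2 \\ P_3 & P_4\end{smallmatrix}\bigr)$ in blocks conforming to $(s,t)$, the $(1,1)$ and $(2,2)$ blocks satisfy $-P_1 = P_1$ and $-P_4 = P_4$, so they vanish (using $\cha k \ne 2$), while the off-diagonal blocks are unchanged. Because $Y^TJY$ is skew-symmetric (as $J$ is) and invertible, the surviving block $P_2$ must be invertible and $P_3 = -P_2^T$, which forces $s = t = n$.

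Finally I would twist $Y$ on the right by a block-diagonal matrix $N = \diag(N_1, N_2)$ with $N_1, N_2 \in \Gl(n,k)$. This preserves the diagonalization, giving $(YN)^{-1}A(YN) = \diag(iI_n, -iI_n)$, and transports $Y^TJY$ to $\bigl(\begin{smallmatrix}0 & N_1^T P_2 N_2 \\ -N_2^T P_2^T N_1 & 0\end{smallmatrix}\bigr)$. Taking, for instance, $N_1 = I_n$ and $N_2 = P_2^{-1}$ yields $X_1 = I_n$, which is diagonal; setting $X = YN$ then gives the required factorization. The only genuine subtlety is the sign computation under $\diag(iI_s,-iI_t)$-conjugation, which, in contrast to the Type~1 case, kills the diagonal blocks rather than the off-diagonal ones; everything else is standard symplectic bookkeeping essentially parallel to the proof of Lemma \ref{Type1Class}.
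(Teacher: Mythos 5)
Your proof is correct, and it takes a genuinely different (and arguably cleaner) route than the paper. The paper works eigenvector-by-eigenvector: it observes that $E(A,i)$ and $E(A,-i)$ are each totally isotropic for $\beta$, and then recursively builds a dual pair of bases $a_1,\dots,a_n$ of $E(A,i)\cap k^{2n}$ and $b_1,\dots,b_n$ of $E(A,-i)\cap k^{2n}$ with $\beta(a_j,b_l)\ne 0$ exactly when $j=l$, descending through orthogonal complements at each step; the diagonal matrix $X_1$ then records the values $\beta(a_j,b_j)$. You instead diagonalize first and repair the Gram matrix afterwards: the computation $\diag(iI_s,-iI_t)(Y^TJY)\diag(iI_s,-iI_t)=Y^TJY$ kills the diagonal blocks, skew-symmetry gives $P_3=-P_2^T$, and invertibility forces $s=t=n$ (a rank count: an invertible matrix of the form $\bigl(\begin{smallmatrix}0 & P_2\\ P_3 & 0\end{smallmatrix}\bigr)$ needs $s+t\le 2\min(s,t)$). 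The final twist by $N=\diag(I_n,P_2^{-1})$ commutes with $\diag(iI_n,-iI_n)$ and normalizes $X_1$ to $I_n$, which is of course diagonal, so your conclusion is strictly stronger than the statement requires and remains compatible with how the lemma is used later (in Theorem \ref{type3lemYes} one only needs $Y_1=R_1^TX_1R_2$ for some invertible $R_1,R_2$, which is trivial when both are the identity). A side benefit of your argument is that the equality $\dim E(A,i)=\dim E(A,-i)=n$ falls out of the symplectic structure itself, rather than being imported from the preceding conjugation lemma, whose conjugation map is not canonically defined when $i\in k$.
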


\begin{proof}

We know from Lemma \ref{Type3Eigen} that we have bases for $E(A,i)$ and $E(A,-i)$ that lie in $k^{2n}$. We will show that we can in fact choose bases $a_1,...,a_{n}$ for $E(A,i) \cap k^{2n}$ and $b_1,...,b_{n}$ for $E(A,-i) \cap k^{2n}$ such that $\beta(a_j,a_l) = 0 = \beta(b_j,b_l)$ and $\beta(a_j, b_l)$ is nonzero if and only if $j=l$. We will build these bases recursively.

First, we know that we can choose some nonzero $a_1 \in E(A,i) \cap k^{2n}$. Then, since $\beta$ is non degenerate, we can choose a vector $t$ such that $\beta(a_1, t) \ne 0$. We note that $E(A,i) \oplus E(A,-i) = k^{2n}$, so we can choose $t_{i} \in E(A,i)\cap k^{2n}$ and $t_{-i} \in E(A,-i)\cap k^{2n}$ such that $t = t_{i}+t_{-i}$. Since $\beta(a_1, t_{i}) = 0$, then it follows that $\beta(a_1, t_{-i}) \in k$ is nonzero. Let $b_1 = t_{-i}$.

Let $E_1 = \Span_k(a_1,b_1)$ and let $F_1$ be the orthogonal complement of $E_1$ in $k^{2n}$. Since the system of linear equations $$\beta(a_1,x) = 0$$ $$\beta(b_1,x) =0$$ has $2n-2$ free variables, then we see that $F_1$ has dimension $2n-2$. 

We now wish to find $a_2 \in F_1 \cap E(A,i)$. Similar to the construction in the previous lemma, we can choose $x \in F_1$, and let $a_2 = Ax+ix$. Now we want $b_2 \in F_2 \cap E(A-,i)$ such that $\beta(a_2, b_2) $ is nonzero. Since $\beta|_{F_1}$ is non degenerate, then there exists some $y \in F_2$ such that $\beta(a_2,y) \ne 0$. Similar to the construction of $b_1$, we see that this implies the existence of a vector $b_2$ that fits our criteria. 

Now, we let $E_2 = \Span_k(a_1,a_2,b_1,b_2)$ and let $F_2$ be the orthogonal complement of $E_2$ in $k^n$. We continue this same argument $n$ times, until we have the bases that we wanted to find. Let $$X= (a_1,...,a_{n}, b_1,...,b_{n}).$$ Then, the result follows.
\end{proof}

We can now use this characterization to show that all such involutions must be isomorphic.

\begin{theorem}
\label{type3lemYes}
Assume that $i \in k$. Then, if $\Inn_A$ and $\Inn_B$ are both Type 3 involutions of $\Sp(2n,k)$, then $\Inn_A$ and $\Inn_B$ are isomorphic over $\Sp(2n,k)$.
\end{theorem}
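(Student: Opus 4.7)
The plan is to reduce to producing a single matrix $Q \in \Sp(2n,k)$ with $Q^{-1}AQ = B$, since (as noted in the Type~1 theorem) such a conjugation immediately gives $(\Inn_Q)^{-1}\Inn_A \Inn_Q = \Inn_B$, i.e.\ that $\Inn_A$ and $\Inn_B$ are isomorphic over $\Sp(2n,k)$.

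First I would apply Lemma~\ref{Type3ClassYes} to both $A$ and $B$, obtaining $A = XDX^{-1}$ and $B = YDY^{-1}$, where $D = \left(\begin{smallmatrix} iI_n & 0 \\ 0 & -iI_n \end{smallmatrix}\right)$, and where $X, Y \in \Gl(2n,k)$ satisfy $X^TJX = \left(\begin{smallmatrix} 0 & X_1 \\ -X_1 & 0 \end{smallmatrix}\right)$ and $Y^TJY = \left(\begin{smallmatrix} 0 & Y_1 \\ -Y_1 & 0 \end{smallmatrix}\right)$ for some diagonal matrices $X_1, Y_1$ with entries in $k$. Since $X^TJX$ and $Y^TJY$ are invertible, so are $X_1$ and $Y_1$. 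The naive guess $Q = XY^{-1}$ conjugates $A$ to $B$, but in general fails to be symplectic because $X^TJX \neq Y^TJY$ when $X_1 \neq Y_1$.

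To repair this, I would exploit the freedom to replace $X$ by $X' = XC$ for any $C$ in the centralizer of $D$ in $\Gl(2n,k)$, since such a $C$ preserves the equation $A = X'D(X')^{-1}$. This centralizer consists exactly of block-diagonal matrices $C = \left(\begin{smallmatrix} C_1 & 0 \\ 0 & C_2 \end{smallmatrix}\right)$, and a direct calculation yields $(X')^TJX' = \left(\begin{smallmatrix} 0 & C_1^T X_1 C_2 \\ -C_2^T X_1 C_1 & 0 \end{smallmatrix}\right)$. Taking $C_1 = I_n$ and $C_2 = X_1^{-1}Y_1$—which lies in $\Gl(n,k)$ and is diagonal, since $X_1$ and $Y_1$ are diagonal and invertible over $k$—produces $(X')^TJX' = Y^TJY$.

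With this adjustment, setting $Q = X'Y^{-1}$ gives $Q^TJQ = (Y^{-1})^T(X')^TJX'\,Y^{-1} = (Y^{-1})^TY^TJY\,Y^{-1} = J$, so $Q \in \Sp(2n,k)$, and $Q^{-1}AQ = Y(X')^{-1}AX'Y^{-1} = YDY^{-1} = B$, completing the argument. The main obstacle is the misalignment between $X^TJX$ and $Y^TJY$ in the normal forms supplied by Lemma~\ref{Type3ClassYes}; what makes the repair straightforward is that these skew-forms differ only by an invertible diagonal scaling on one block, which can always be absorbed by a block-diagonal element of the centralizer of $D$ defined over $k$.
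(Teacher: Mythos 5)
Your proposal is correct and follows essentially the same route as the paper: both invoke Lemma~\ref{Type3ClassYes} to put $A$ and $B$ in the common normal form $XDX^{-1}$, $YDY^{-1}$, then correct the mismatch between $X^TJX$ and $Y^TJY$ by multiplying $X$ on the right by a block-diagonal matrix commuting with $D$ (the paper writes $R=\left(\begin{smallmatrix}R_1&0\\0&R_2\end{smallmatrix}\right)$ with $Y_1=R_1^TX_1R_2$; your choice $C_1=I_n$, $C_2=X_1^{-1}Y_1$ is a particular instance of this), and finally set $Q=XRY^{-1}$. The argument is sound.
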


\begin{proof}

Suppose we have two such involutions of $\Sp(2n,k)$. Let them be represented by matrices $A,B \in \Sp(2n,k)$. By the previous Lemma, we can choose $X, Y \in \Gl(n,k)$ such that $$X^{-1}AX = \left(\begin{array}{cc}iI_n & 0 \\0 & -iI_n \end{array}\right) = Y^{-1}BY,$$ $$X^TJX = \left(\begin{array}{cc}0 & X_1 \\-X_1 & 0\end{array}\right)$$ and 
 $$Y^TJY = \left(\begin{array}{cc}0 & Y_1 \\-Y_1 & 0\end{array}\right)$$ where $X_1$ and $Y_1$ are diagonal.
 
Since $X_1$ and $Y_1$ are both invertible diagonal matrices, then we can choose $R_1$ and $R_2 \in \Gl(\frac{n}{2},k)$ such that $Y_1 = R_1^TX_1R_2$. Let $R = \left(\begin{smallmatrix}R_1 & 0 \\0 & R_2\end{smallmatrix}\right)$ and $Q = XRY^{-1}$. We will show that $Q \in \Sp(2n,k)$ and $Q^{-1}AQ = B$. This will then prove that $\Inn_A$ and $\Inn_B$ lie in the same isomorphy class.

First we show that $Q \in \Sp(2n,k)$. Note that 
\begin{align*}
Q^TJQ &= (XRY^{-1})^TJ(XRY^{-1})\\ 
&= (Y^{-1})^TR^T(X^TJX)RY^{-1}\\
&= (Y^{-1})^T(Y^TJY)Y^{-1}\\ 
&= J,
\end{align*}
which proves this claim. 

Lastly, we show that $Q^{-1}AQ = B$. We first note that $R$ and $\left(\begin{smallmatrix}-iI & 0 \\0 & iI\end{smallmatrix}\right)$ commute. Then, we see that
\begin{align*}
Q^{-1}AQ &= (XRY^{-1})^{-1}A (XRY^{-1})\\ 
&= YR^{-1}(X^{-1}AX)RY^{-1}\\
&= Y R^{-1}\left(\begin{array}{cc}-iI & 0 \\0 & iI\end{array}\right)R Y^{-1}\\ 
&= Y R^{-1}R\left(\begin{array}{cc}-iI & 0 \\0 & iI\end{array}\right) Y^{-1}\\
&= Y \left(\begin{array}{cc}-iI & 0 \\0 & iI\end{array}\right) Y^{-1}\\
&= B.
\end{align*}
We have shown what was needed.
\end{proof}

We now examine the case where $i \not \in k$, beginning with a characterization of the matrices that induce these involutions.

\begin{lem}
\label{Type3ClassNo}
Assume $i \not \in k$. Suppose $\theta = \Inn_A$ is a Type 3 involution of $\Sp(2n,k)$. Then, $A = U \left(\begin{smallmatrix}0 & I_{n} \\ -I_{n} & 0\end{smallmatrix}\right) U^{-1} = UJU^{-1}$ for $$U  = \left(\begin{array}{cccccccccc}a_1 & a_2 & \cdots & a_n &b_1 & b_2 & \cdots & b_n  \end{array}\right)\in \Gl(2n,k),$$ where the $a_j+ib_j$ are a basis for $E(A,i)$, the $a_j-ib_j$ are a basis for $E(A,-i)$, and $U^TJU = \left(\begin{smallmatrix}0 & U_1 \\-U_1 & 0\end{smallmatrix}\right)$, where $U_1$ is diagonal.
\end{lem}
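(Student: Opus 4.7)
The plan is to mimic Lemma \ref{Type3ClassYes}, but now to work entirely over $k$ since the hypothesis $i \notin k$ prevents us from finding nonzero vectors of $E(A,\pm i)$ lying in $k^{2n}$. The goal is to produce a carefully chosen basis $\{v_j = a_j + ib_j\}$ of $E(A,i)$ whose real and imaginary parts $a_j, b_j \in k^{2n}$ furnish the columns of $U$.

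First I would verify that $A = UJU^{-1}$ for any such basis. By Lemma \ref{Type3Eigen} we may take a basis $\{a_j + ib_j\}$ of $E(A,i)$ with $a_j, b_j \in k^{2n}$. Expanding $A(a_j + ib_j) = i(a_j + ib_j)$ and separating real and imaginary parts (legitimate because $A$ has entries in $k$ and $i \notin k$) yields $A a_j = -b_j$ and $A b_j = a_j$. Setting $U = (a_1, \ldots, a_n, b_1, \ldots, b_n)$ translates these into $AU = UJ$, hence $A = UJU^{-1}$. Invertibility of $U$ over $k$ follows because $\{v_j\} \cup \{\bar v_j\}$ is a $k[i]$-basis of $k[i]^{2n}$ and $(a_j, b_j)$ differs from it by a change of basis with determinant $(-2i)^n \neq 0$.

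Next I would refine the basis so that $U^TJU$ has the prescribed block form. First observe that $E(A,i)$ is totally isotropic for $\beta$: for $v,w \in E(A,i)$, one has $\beta(v,w) = \beta(Av,Aw) = i^2 \beta(v,w) = -\beta(v,w)$, so $\beta(v,w) = 0$ since $\cha(k) \neq 2$; likewise for $E(A,-i)$. Thus the only $\beta$-pairings across bases are controlled by the sesquilinear form $h : E(A,i) \times E(A,i) \to k[i]$ given by $h(v,w) := \beta(v, \bar w)$, where overline denotes the nontrivial element of $\gal(k[i]/k)$. A short check shows $h$ is skew-Hermitian, $h(w,v) = -\overline{h(v,w)}$, and non-degenerate because the $\beta$-pairing $E(A,i) \times E(A,-i) \to k[i]$ is. A standard Gram--Schmidt construction then produces an $h$-orthogonal basis $\{v_1,\ldots,v_n\}$ of $E(A,i)$; the one nontrivial ingredient is the existence of anisotropic vectors at each stage, which follows in characteristic not $2$ by a polarization argument applied to the non-zero skew-Hermitian form.

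Writing $v_j = a_j + ib_j$ and forming $U$, I would compute $U^TJU$ entry by entry. Expanding $a_j = (v_j + \bar v_j)/2$ and $b_j = (v_j - \bar v_j)/(2i)$ together with the isotropy of each eigenspace forces $\beta(a_r, a_s) = 0 = \beta(b_r, b_s)$, while $\beta(a_r, b_s)$ reduces to a $k$-multiple of $h(v_r, v_s) - \overline{h(v_r, v_s)}$. By $h$-orthogonality this vanishes for $r \neq s$, and since $h(v_r, v_r)$ is purely imaginary and nonzero, it yields a nonzero element of $k$ when $r = s$. This is precisely the block form $\left(\begin{smallmatrix} 0 & U_1 \\ -U_1 & 0 \end{smallmatrix}\right)$ with $U_1$ diagonal. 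The main obstacle is the Gram--Schmidt step: one must verify the existence of anisotropic vectors for skew-Hermitian forms in this generality, but this is a routine consequence of $\cha(k) \neq 2$.
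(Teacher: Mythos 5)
Your proof is correct, and while it arrives at the same basis as the paper, it gets there by a more structural route. The paper constructs the pairs $(a_j,b_j)=(Ax_j,x_j)$ recursively, where the key step at each stage is to find $x\in k^{2n}$ in the current orthogonal complement with $x^TA^TJx=\beta(Ax,x)\ne 0$; this is done concretely by showing $A^TJ$ is symmetric and invertible, so that either some standard basis vector $e_j$ works or else some $e_j+e_l$ does. You instead encode the cross-pairing as the skew-Hermitian form $h(v,w)=\beta(v,\bar w)$ on $E(A,i)$, prove it nondegenerate, and run Gram--Schmidt; your polarization step producing an anisotropic vector is exactly the paper's existence of $x$ with $\beta(Ax,x)\ne 0$, since $\beta(v,\bar v)=-2i\,\beta(Ax,x)$ for $v=Ax+ix$, and the polarization argument is sound (if $h(v,v)=0$ identically then $h(v,w)=\overline{h(v,w)}\in k$ for all $v,w$, and replacing $v$ by $iv$ forces $h\equiv 0$, contradicting nondegeneracy). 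Your formulation identifies the underlying object --- a nondegenerate skew-Hermitian form over $k[i]/k$ --- and so generalizes cleanly (indeed the paper reuses essentially the same computation verbatim for Type 4), while the paper's version is more elementary and explicit. One point to state more carefully: the vanishing of $\beta(a_r,a_s)$ and $\beta(b_r,b_s)$ does not follow from isotropy of the eigenspaces alone; after isotropy kills two of the four terms, the surviving contribution is $\tfrac14\bigl(h(v_r,v_s)+\overline{h(v_r,v_s)}\bigr)$, which vanishes for $r\ne s$ by $h$-orthogonality and for $r=s$ because the diagonal of a skew-Hermitian form is purely imaginary --- both facts are already in your argument, so this is only a matter of attributing the cancellation to the right hypotheses.
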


\begin{proof}

We know from Lemma \ref{Type3Eigen} that we have bases for $E(A,i)$ and $E(A,-i)$ that lie in $k[i]^{2n}$. We will show that we can in fact choose bases $a_1+ib_1,...,a_{n}+ib_{n}$ for $E(A,i) \cap k[i]^{2n}$ and $a_1-ib_1,...,a_{n}-ib_{n}$ for $E(A,-i) \cap k[i]^{2n}$ such that $\beta(a_j+ib_j, a_l-ib_l)$ is nonzero if and only if $j=l$. From this, we will be able to show that $\beta(a_j,a_l) = 0 = \beta(b_j,b_l)$ when $j \ne l$ and $\beta(a_j, b_l) = 0$ for all $j$ and $l$. We will build these bases recursively.

Recall that given any vector $x \in k^{2n}$, we know that $Ax+ix \in E(A,i)$. We want to choose $x\in k^{2n}$ such that $x^TA^TJx \ne 0$. (The reasons for this will become apparent.) If $e_j^TA^TJe_j \ne 0$, we can let $x=e_j$. Suppose that this doesn't occur for any $j$.

 Since $A^TJ$ is invertible, we know that for more than $2n$ pairs of $j$ and $l$ we have  $e_j^TA^TJe_l \ne 0$. Also, we see that since $A$ is symplectic and $A^TJA = J$, then we have that 
$$A^TJ = JA^{-1} = JA^3 = -JA$$ and that $$(A^TJ)^T = J^TA = -JA = A^TJ.$$
That is, $A^TJ$ is symmetric. So, $e_j^TA^TJe_j = e_l^TA^TJe_l $.
Then, we can let $x = e_j+e_l$. Then, we have 
$$x^TA^TJx = e_jA^TJe_l+e_lA^TJe_j = 2e_jA^TJe_l \ne 0.$$ In either case, we have many choices for $x$.

Let $x \in k^{2n}$ be a vector from above. We have $Ax+ix \in E(A,i)$. Let $a_1 = Ax$ and $b_1 = x$. So, $a_1+ib_1 \in E(A,i)$ and $a_1-ib_1 \in E(A,-i)$.  From this, it follows that 
\begin{align*}
\beta(a_1+ib_1, a_1-ib_1) &= (\beta(a_1,a_1)+\beta(b_1,b_1))+i(-\beta(a_1,b_1)+\beta(b_1,a_2))\\
&=0+i(-\beta(Ax,x)+\beta(x,Ax)\\
&=-2i\beta (Ax,x)\\  
&= -2i(x^TA^TJx)\\
&\ne 0.
\end{align*}

Let $E_1 = \Span_{k[i]}(a_1+ib_1, a_1-ib_1) = \Span_{k[i]}(a_1,b_1)$, and let $F_1$ be the orthogonal complement of $E_1$ over $k[i]$. $F_1$ has dimension $2n-2$, and $\beta|_{F_1}$ is nondegenerate. So, we can find a nonzero vector $x \in F_1 \cap k^{2n}$ such that $\beta|_{F_1}(Ax,x) \ne 0$. So, as in the last case, let $a_2 = Ax$ and $b_2 = x$. Similar to before, we have $\beta(a_2+ib_2, a_2-ib_2) \ne 0$.

Let $E_2 = \Span_{k[i]}(a_1,a_2,b_1,b_2)$, and let $F_2$ be the orthogonal complement of $E_2$ over $k[i]$. In this manner, we can create the bases that we noted in the opening paragraph of this proof. 

Note that we always have $$0 = \beta(a_j+ib_j, a_l+ib_l) = (\beta(a_j,a_l)-\beta(b_j,b_l))+i(\beta(a_j,b_l)+\beta(b_j,a_l)),$$ and when $j \ne l$ we have $$0 = \beta(a_j+ib_j, a_l-ib_l) = (\beta(a_j,a_l)+\beta(b_j,b_l))+i(-\beta(a_j,b_l)+\beta(b_j,a_l)).$$

This tells us that when $j \ne l$ that $$\beta(a_j,b_l) = \beta(a_j,a_l) = \beta(b_j,b_l) = 0.$$ 

When $j = l$, we know that $\beta(b_j,b_j) =0= \beta(a_j,a_j)$. Lastly, we see that $\beta(a_j, b_j)= -\beta(b_j,a_j)$. 

Let $$U = (a_1,...,a_{n},b_1,...,b_{n}).$$ Then, it follows that $U^TJU = \left(\begin{smallmatrix}0 & U_1 \\-U_1 & 0\end{smallmatrix}\right)$ where $X_1$ is a diagonal $n \times n$ matrix.

Lastly, since $Ab_j=a_j$, then it follows that $Aa_j = -b_j$. So, we have that  $$A = U \left(\begin{array}{cc}0 & I_{n} \\ -I_{n} & 0\end{array}\right) U^{-1}.$$

\end{proof}

We now show that if $i \not \in k$, then we also have that there is only one isomorphy class of Type 3 involutions.

\begin{theorem}
\label{type3lemYes}
Assume $i \not \in k$. Then, if $\Inn_A$ and $\Inn_B$ are both Type 3 involutions of $\Sp(2n,k)$, then $\Inn_A$ and $\Inn_B$ are isomorphic over $\Sp(2n,k)$.
\end{theorem}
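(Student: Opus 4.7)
The plan is to follow the same strategy as for the $i \in k$ case, using the canonical form supplied by Lemma \ref{Type3ClassNo} and then constructing a symplectic conjugator. Applying the lemma to both $A$ and $B$, I obtain $U, V \in \Gl(2n,k)$ with $A = UJU^{-1}$, $B = VJV^{-1}$, together with $U^TJU = \left(\begin{smallmatrix}0 & U_1 \\ -U_1 & 0\end{smallmatrix}\right)$ and $V^TJV = \left(\begin{smallmatrix}0 & V_1 \\ -V_1 & 0\end{smallmatrix}\right)$ for invertible diagonal matrices $U_1, V_1 \in \Gl(n,k)$. The goal is to produce $Q \in \Sp(2n,k)$ with $Q^{-1}AQ = B$, realizing the desired isomorphy.

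Trying the ansatz $Q = URV^{-1}$, a direct computation shows $Q^{-1}AQ = VR^{-1}JRV^{-1}$, so $Q^{-1}AQ = B$ forces $R^{-1}JR = J$, i.e., $R \in \Sp(2n,k)$. Meanwhile $Q^TJQ = (V^{-1})^T R^T \left(\begin{smallmatrix}0 & U_1 \\ -U_1 & 0\end{smallmatrix}\right) R V^{-1}$, so requiring $Q \in \Sp(2n,k)$ is equivalent to $R^T \left(\begin{smallmatrix}0 & U_1 \\ -U_1 & 0\end{smallmatrix}\right) R = \left(\begin{smallmatrix}0 & V_1 \\ -V_1 & 0\end{smallmatrix}\right)$. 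First I would impose the block-diagonal ansatz $R = \left(\begin{smallmatrix}R_1 & 0 \\ 0 & (R_1^T)^{-1}\end{smallmatrix}\right)$, which places $R$ in $\Sp(2n,k)$ automatically; using that $U_1$ and $V_1$ are symmetric, the remaining requirement then collapses to the single equation $R_1^{-1} U_1 R_1 = V_1$, i.e., $U_1$ and $V_1$ must be conjugate in $\Gl(n,k)$.

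For diagonal $U_1$ and $V_1$, being conjugate amounts to having the same multiset of diagonal entries, in which case one may take $R_1$ to be a suitable permutation. The hard part will be arranging this coincidence, for which I would exploit the freedoms left over in Lemma \ref{Type3ClassNo}: rescaling the chosen vector $x_j$ by $\lambda \in k^*$ multiplies the $(j,j)$-entry of $U_1$ by $\lambda^2$; reordering the basis permutes the entries; and replacing $A$ by $-A$ (permissible since $\Inn_{-A} = \Inn_A$) negates the entries simultaneously. The main obstacle I expect is verifying that these moves always suffice to match $U_1$ with $V_1$ for any two Type 3 involutions; at bottom this is the assertion that the Hermitian form induced on $V$ (regarded as a $k(i)$-vector space via $i \cdot v := Av$) admits only one equivalence class of rank $n$ up to sign and square-class rescaling, which is precisely the place where the arithmetic of $k$ enters the proof.
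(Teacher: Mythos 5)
Your reduction is set up correctly as far as it goes: with $A=UJU^{-1}$, $B=VJV^{-1}$ and $Q=URV^{-1}$, taking $R=\left(\begin{smallmatrix}R_1&0\\0&R_1\end{smallmatrix}\right)$ with $R_1$ a permutation matrix does make $R$ commute with $J$ and collapses everything to $R_1^TU_1R_1=V_1$. (One small slip along the way: $R^{-1}JR=J$ says that $R$ commutes with $J$, which is not the same condition as $R\in\Sp(2n,k)$; for a permutation block both happen to hold, so no harm is done.) The genuine gap is the step you yourself flag as the hard part: you never prove that the diagonal Gram matrices $U_1$ and $V_1$ coming out of Lemma \ref{Type3ClassNo} can be matched using only permutation, rescaling by squares, and a global sign change. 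That statement is exactly the classification of the form that $A$ induces on $k^{2n}$ viewed as a $k(i)$-space, and it is not a formality. Note that if $Q\in\Sp(2n,k)$ then $(Q^{-1}AQ)^TJ=Q^T(A^TJ)Q$, so the symmetric matrix $A^TJ$ transforms by congruence and its equivalence class is an invariant of the symplectic conjugacy class of $A$; the diagonal entries of $U_1$ are values $x_j^TA^TJx_j$ of this form. Over $k=\mathbb{R}$, for instance, rescaling by $\lambda^2>0$, permuting, and negating all entries simultaneously can never turn $\diag(1,1)$ into $\diag(1,-1)$, so for $n\ge 2$ the elementary moves you list do not suffice, and completing your argument would require importing a genuine classification of Hermitian forms over $k(i)/k$ --- precisely the content you have deferred.

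The paper takes a different route that avoids comparing two independently chosen diagonal forms: it passes to $k[i]$, where $\Inn_A$ and $\Inn_B$ become Type 3 involutions with $i$ in the field, invokes the already-proved theorem there to obtain $Q_i\in\Sp(2n,k[i])$ with $Q_i^{-1}AQ_i=B$, and then transports the chosen eigenbasis of $A$ to one for $B$ via $Y=Q_i^{-1}X$. Because $Q_i$ is symplectic, the transported basis has the \emph{same} Gram matrix ($V^TJV=U^TJU$), so $Q=UV^{-1}$ is symplectic and conjugates $A$ to $B$ with no matching problem at all. Be aware, however, that the paper's descent leans on the assertion that the columns of $Y=Q_i^{-1}X$ remain $i$-conjugate in pairs, which is not automatic for $Q_i$ with entries genuinely in $k[i]$; the arithmetic difficulty you correctly identified is hiding in that step. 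In any case, your proposal as written is not a complete proof: the form-classification input at its final stage is missing, and the moves you propose are demonstrably insufficient to supply it.
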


\begin{proof}

By the previous Lemma, we can choose a matrix $U \in \Gl(n,k)$ such that $$A = U \left(\begin{array}{cc}0 & -I_{\frac{n}{2}} \\ I_{\frac{n}{2}} & 0\end{array}\right) U^{-1}$$ for $$U  = \left(\begin{array}{cccccccccc}a_1 & a_2 & \cdots & a_\frac{n}{2} &b_1 & b_2 & \cdots & b_\frac{n}{2}  \end{array}\right)\in \Gl(n,k),$$ where the $a_j+ib_j$ are a basis for $E(A,i)$, the $a_j-ib_j$ are a basis for $E(A,-i)$, and $U^TJU = \left(\begin{smallmatrix}0 & U_1 \\ -U_1 & 0\end{smallmatrix}\right)$ for diagonal matrix $U_1$.

Let $$X = (a_1+ib_1,...,a_{\frac{n}{2}}+ib_{\frac{n}{2}}, a_1-ib_1,...,a_{\frac{n}{2}}-ib_{\frac{n}{2}}),$$ and consider $\Inn_A$ and $\Inn_B$ as involutions of $\Sp(2n,k[i])$. By construction, we see that $X$ is a matrix that satisfies the conditions of Lemma \ref{Type3ClassYes} for the group $\Sp(2n,k[i])$. We note that $X_1 = -2iU_1$. We also know by the previous Theorem that $\Inn_A$ and $\Inn_B$ are isomorphic over $\Sp(2n,k[i])$. So, we can choose $Q_i \in \Sp(2n,k[i])$ such that $Q_i^{-1}AQ_i = B$. Let $Y = Q_i^{-1}X$. We now show a couple of facts about $Y$.

First, we note that since $Y$ was obtained from $X$ via row operations, then for $1 \le j \le \frac{n}{2}$, the $j$th and $\frac{n}{2}+j$th columns are $i$-conjugates of one another.

Also, note that 
\begin{align*}
Y^{-1}BY &= (Q_i^{-1}X)^{-1}B (Q_i^{-1}X)\\
&= X^{-1}Q_iBQ_i^{-1}X\\ 
&= X^{-1}AX\\ 
&= \left(\begin{array}{cc} -iI_{\frac{n}{2}} & 0 \\0&  iI_{\frac{n}{2}} \end{array}\right).
\end{align*}

Lastly, we see that 
\begin{align*}
Y^TJY &= (Q_i^{-1}X)^TJ(Q_i^{-1}X)\\
&= X^T((Q_i^{-1})^TJQ_i)X\\
&= X^TJX\\
&= \left(\begin{array}{cc} 0 & X_1 \\ -X_1 & 0 \end{array}\right)\\ 
&= \left(\begin{array}{cc} 0 & -2iU_1 \\ 2iU_1 & 0 \end{array}\right).
\end{align*}

Write $$Y = (c_1+id_1,...,c_{\frac{n}{2}}+id_{\frac{n}{2}}, c_1-id_1,...,c_{\frac{n}{2}}-id_{\frac{n}{2}}),$$ and let $$V = (c_1,...,c_{\frac{n}{2}},d_1,...,d_{\frac{n}{2}}).$$ It follows from what we have shown that $$B = V \left(\begin{array}{cc}0 & -I_{\frac{n}{2}} \\ I_{\frac{n}{2}} & 0\end{array}\right) V^{-1} \text{ where } V^TJV = \left(\begin{array}{cc}0 & U_1 \\ -U_1 & 0\end{array}\right) = U^TMU.$$

Now, let $Q = UV^{-1}$. We will show that $Q^{-1}AQ = B$ and $Q \in \Sp(2n,k)$. This will prove that $\Inn_A$ and $\Inn_B$ are isomorphic over $\Sp(2n,k)$. 

We first show that $Q \in \Sp(2n,k)$.  
\begin{align*}
Q^TJQ &= (UV^{-1})^TJUV^{-1}\\ 
&= (V^{-1})^T(U^TJU)V^{-1}\\ 
&= (V^{-1})^T(V^TJV)V^{-1}\\ 
&= J.
\end{align*}

Lastly, we show that $Q^{-1}AQ = B$. 
\begin{align*}
Q^{-1}AQ &= (UV^{-1})^{-1}A(UV^{-1})\\ 
&= VU^{-1}AUV^{-1}\\
&= V\left(\begin{array}{cc}0 & -I_{\frac{n}{2}} \\ I_{\frac{n}{2}} & 0\end{array}\right) V^{-1}\\ 
&= B.
\end{align*}
We have shown what was needed.

\end{proof}

Combining the results from this section, we get the following corollary. 

\begin{cor}
\label{CorType3}
If $\Inn_A$ and $\Inn_B$ are both Type 3 involutions of $\Sp(2n,k)$, then $\Inn_A$ and $\Inn_B$ are isomorphic over $\Sp(2n,k)$. That is, $\Sp(2n,k)$ has exactly one isomorphy class of Type 3 involutions. Further, the matrix $J$ is a representative matrix for this isomorphy class.
\end{cor}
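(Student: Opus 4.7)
The plan is to obtain this corollary essentially as an immediate consequence of the two preceding theorems, which together exhaust the dichotomy on the ground field, and then to verify that $J$ itself falls into the Type 3 class so that it can serve as a canonical representative.

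First, I would simply observe that a Type 3 involution of $\Sp(2n,k)$ is, by definition, an inner automorphism $\Inn_A$ with $A\in\Sp(2n,k)$ and $A^2=-I$; no hypothesis on the presence or absence of $\sqrt{-1}$ in $k$ is built into the definition. For any given field $k$ (of characteristic not $2$), either $i\in k$ or $i\notin k$. In the first case, the theorem just proven under the assumption $i\in k$ states that any two Type 3 involutions are $\Sp(2n,k)$-isomorphic. In the second case, the theorem proven under the assumption $i\notin k$ yields the identical conclusion. Combining these two cases exhausts all possibilities, so in any field $k$ of characteristic not $2$, $\Sp(2n,k)$ has at most one isomorphy class of Type 3 involutions.

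Next I would argue that this isomorphy class is non-empty and contains $\Inn_J$. One checks directly from the defining relation $J^TJJ=J$ that $J\in\Sp(2n,k)$: indeed $J^T=-J$ and $J^2=-I$, so $J^TJJ=-J^3=-(-J)=J$. Hence $J\in\Sp(2n,k)$ and $J^2=-I$, which is precisely the defining condition for $\Inn_J$ to be a Type 3 involution (provided $\Inn_J$ itself has order $2$; but $\Inn_{J^2}=\Inn_{-I}=\Id$ while $\Inn_J\ne\Id$ by Theorem \ref{Inn=Id} since $J$ is not a scalar). Therefore $\Inn_J$ is a Type 3 involution, and by the uniqueness established above every other Type 3 involution is $\Sp(2n,k)$-isomorphic to $\Inn_J$.

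There is really no main obstacle here: the substantive work has been done in Lemma \ref{Type3ClassYes}, Lemma \ref{Type3ClassNo}, and the two isomorphy theorems that follow them. The only mildly subtle point is making sure the case split on whether $i\in k$ truly covers every field of characteristic not $2$ and that the notion of ``Type 3'' is field-independent, so that the two case-by-case results can be merged into a single uniform statement. Once that bookkeeping is done, the verification that $J$ is a representative is a one-line computation, and the corollary follows.
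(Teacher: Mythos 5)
Your proposal is correct and matches the paper's own (implicit) argument: the corollary is obtained by combining the $i\in k$ and $i\notin k$ isomorphy theorems, which exhaust all fields of characteristic not $2$. Your added verification that $J\in\Sp(2n,k)$, $J^2=-I$, and $\Inn_J\neq\Id$ (so that $\Inn_J$ is indeed a Type 3 involution and hence a representative) is a worthwhile detail that the paper leaves unstated.
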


\subsection{Type 4 Involutions}

We now move on to a similar classification in the Type 4 case. First, we characterize the eigenvectors of the matrices that induce these involutions. Recall that we can choose $A \in \Sp(2n,k[\sqrt{\alpha}])$ such that each entry of $A$ is a $k$-multiple of $\sqrt{\alpha}$, and that we know $A^2 = -I$. We begin by proving a couple of lemmas about the eigenspaces of these matrices.

\begin{lem}

Suppose $A \in \Sp(2n,k[\sqrt{\alpha}]) $ induces a Type 4 involution of $\Sp(2n,k)$. Also suppose $x,y \in k^{2n}$ such that $x+\sqrt{-\alpha}y \in E(A,i)$. Then, $x-\sqrt{-\alpha}y \in E(A,-i)$. Likewise, if $u,v \in k^{2n}$ such that $u+\sqrt{-\alpha}v \in E(A,-i)$. Then, $u-\sqrt{-\alpha}v \in E(A,i)$. Further, $\dim(E(A,i))= \dim(E(A,-i))$.

\end{lem}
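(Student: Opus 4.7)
The plan is to transfer the problem to a matrix $A'$ over $k$ whose eigenvalues live in the quadratic extension $k[\sqrt{-\alpha}]$, so that the Galois theory of $k[\sqrt{-\alpha}]/k$ applies cleanly. By Theorem \ref{charthms}, every entry of $A$ is a $k$-multiple of $\sqrt{\alpha}$, so I can write $A = \sqrt{\alpha}A'$ with $A' \in \M(2n,k)$; the Type~4 condition $A^2 = -I$ then becomes $(A')^2 = -(1/\alpha)I$. Fixing the branch $\sqrt{-\alpha} = i\sqrt{\alpha}$ (equivalently $\sqrt{\alpha}\sqrt{-\alpha} = i\alpha$), one computes $i/\sqrt{\alpha} = \sqrt{-\alpha}/\alpha \in k[\sqrt{-\alpha}]$, and obtains the equivalence $Av = \pm iv \iff A'v = \pm(\sqrt{-\alpha}/\alpha)v$.

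Let $\sigma$ denote the nontrivial element of $\gal(k[\sqrt{-\alpha}]/k)$, extended entrywise to $k[\sqrt{-\alpha}]^{2n}$; since $A' \in \M(2n,k)$, we have $\sigma(A'w) = A'\sigma(w)$ for every $w \in k[\sqrt{-\alpha}]^{2n}$. Applying $\sigma$ to the equation $A'(x + \sqrt{-\alpha}y) = (\sqrt{-\alpha}/\alpha)(x + \sqrt{-\alpha}y)$ at once yields $A'(x - \sqrt{-\alpha}y) = -(\sqrt{-\alpha}/\alpha)(x - \sqrt{-\alpha}y)$, which translates back via $A = \sqrt{\alpha}A'$ to $A(x - \sqrt{-\alpha}y) = -i(x - \sqrt{-\alpha}y)$; thus $x - \sqrt{-\alpha}y \in E(A,-i)$. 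The second assertion follows by the identical argument starting from a vector in $E(A,-i)$.

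For the dimension equality, the minimal polynomial of $A'$ divides $X^2 + 1/\alpha = (X - \sqrt{-\alpha}/\alpha)(X + \sqrt{-\alpha}/\alpha)$, so $A'$ is diagonalizable over $k[\sqrt{-\alpha}]$, and $k[\sqrt{-\alpha}]^{2n}$ decomposes as $E(A,i) \oplus E(A,-i)$. The additive bijection $\sigma$ interchanges these two eigenspaces by the previous paragraph, so their $k[\sqrt{-\alpha}]$-dimensions agree; extending scalars to $\bar{k}$ gives $\dim E(A,i) = \dim E(A,-i)$. The main technical subtlety is the sign bookkeeping around $\sqrt{\alpha}$, $\sqrt{-\alpha}$, and $i$, together with the observation that $\sigma$ cannot be applied directly to $Av = iv$ (since $i \notin k[\sqrt{-\alpha}]$ whenever $i \notin k$, because $i = a+b\sqrt{-\alpha}$ with $a,b\in k$ forces either $i\in k$ or $\alpha$ to be a square in $k$), so the replacement of $A$ by $A'$ is what moves the eigenvalue into a field on which $\sigma$ operates and is the key step that makes the whole argument go through.
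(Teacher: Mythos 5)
Your proof is correct, and it runs on the same engine as the paper's: a conjugation that fixes the matrix, negates $\sqrt{-\alpha}$, and therefore exchanges the two eigenspaces. What you do differently is where you locate that conjugation. The paper keeps $A$ itself (entries in $\sqrt{\alpha}\,k$), expands the eigenvector equation into $Ax+\sqrt{-\alpha}Ay=\sqrt{\alpha}y-ix$, and invokes ``complex conjugation,'' which as written is the automorphism of $k[\sqrt{\alpha}][i]$ over $k[\sqrt{\alpha}]$ and so tacitly needs $i\notin k[\sqrt{\alpha}]$. You instead factor $A=\sqrt{\alpha}A'$ with $A'\in\M(2n,k)$, move the eigenvalue to $\pm\sqrt{-\alpha}/\alpha\in k[\sqrt{-\alpha}]$, and apply the honest Galois automorphism $\sigma$ of $k[\sqrt{-\alpha}]/k$, which commutes with $A'$ because $A'$ is $k$-rational. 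This is a genuinely cleaner formalization: the conjugation is unambiguous, it works uniformly whether or not $i\in k$, and the dimension count reduces to the standard fact that an additive, $\sigma$-semilinear bijection carrying one eigenspace onto the other preserves dimension. The one caveat --- which your proof shares with the paper's, so I do not count it against you --- is the case $\sqrt{-\alpha}\in k$: there $\gal(k[\sqrt{-\alpha}]/k)$ is trivial, no such $\sigma$ exists, and the statement as written actually fails (take $y=0$ and $x\in k^{2n}$ a $k$-rational eigenvector of $A'$ for the eigenvalue $\sqrt{-\alpha}/\alpha\in k$); the lemma is only valid, and in Lemma \ref{Type4Eigen} only needed, either under the hypothesis $\sqrt{-\alpha}\notin k$ or for the particular vectors $x=\sqrt{\alpha}Az$, $y=z$.
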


\begin{proof}

Suppose $x,y \in k^n$ such that $x+\sqrt{-\alpha}y \in E(A,-i)$. Then,
$$A(x+\sqrt{-\alpha}y) = -i(x+\sqrt{-\alpha}y)$$ which implies
$$Ax+\sqrt{-\alpha}Ay = \sqrt{\alpha}y-ix.$$
Then, complex conjugation tells us that 
$$Ax-\sqrt{-\alpha}Ay = \sqrt{\alpha}y+ix,$$ which tells us that
$$A(x-\sqrt{-\alpha}y) = i(x-\sqrt{-\alpha}y).$$ A similar argument shows that if $u,v \in k^n$ such that $u+\sqrt{-\alpha}v \in E(A,i)$. Then, $u-\sqrt{-\alpha}v \in E(A,-i)$.

Since  $x+\sqrt{-\alpha}y \in E(A,-i)$ implies $x-\sqrt{-\alpha}y \in E(A,i)$ and vice versa, then we see that $\dim(E(A,i))= \dim(E(A,-i))$.

\end{proof}

\begin{lem}
\label{Type4Eigen}
Suppose $\theta = \Inn_A$ is a Type 4 involution of $\Sp(2n,k)$ where $A \in \Sp(2n,k[\sqrt{\alpha}])$. Then, we can find $x_1,...,x_{n}, y_1,...,y_{n} \in k^n$ such that the $x+\sqrt{-\alpha}y$ are a basis for $E(A,i)$ and the $x-\sqrt{-\alpha}y$ are a basis for $E(A,-i)$.
\end{lem}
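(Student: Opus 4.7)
The plan is to mirror the proof of Lemma~\ref{Type3Eigen} but exploit the factorization $A = \sqrt{\alpha}\,B$ with $B \in M_{2n}(k)$, which is guaranteed by Theorem~\ref{charthms}(2) since every entry of $A$ is a $k$-multiple of $\sqrt{\alpha}$. From $A^2 = -I$ we obtain $B^2 = -\tfrac{1}{\alpha}I$, so the minimal polynomial of $B$ over $k$ is $t^2 + \tfrac{1}{\alpha}$, with roots $\mu_{\pm} = \pm\tfrac{\sqrt{-\alpha}}{\alpha}$ in $k[\sqrt{-\alpha}]$. A short check using the convention $\sqrt{-\alpha} = i\sqrt{\alpha}$ shows that $Bv = \mu_+ v$ is equivalent to $Av = iv$ and $Bv = \mu_- v$ is equivalent to $Av = -iv$, so $E(A, \pm i) = E(B, \mu_{\pm})$; each has dimension $n$ by the previous lemma, and $B$ is diagonalizable over $k[\sqrt{-\alpha}]$.

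Next I would produce the eigenvector basis explicitly. For any $z \in k^{2n}$ set
\[
u(z) = (B + \mu_+ I)z = Bz + \sqrt{-\alpha}\,\tfrac{z}{\alpha}.
\]
Since $\mu_+^2 = -\tfrac{1}{\alpha}$, the identity $(B - \mu_+ I)(B + \mu_+ I) = B^2 - \mu_+^2 I = 0$ forces $u(z) \in E(B, \mu_+) = E(A, i)$; moreover $u(z)$ has the desired form $x + \sqrt{-\alpha}\,y$ with $x = Bz \in k^{2n}$ and $y = z/\alpha \in k^{2n}$. Letting $z$ range over a $k$-basis of $k^{2n}$, the vectors $u(z)$ span the image of $B + \mu_+ I$, which equals $E(B, \mu_+)$ of dimension $n$; after discarding redundancies I can select $n$ of them and the corresponding $x_j, y_j \in k^{2n}$ to make $\{x_j + \sqrt{-\alpha}\,y_j\}_{j=1}^n$ a basis of $E(A, i)$.

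The previous lemma then gives $x_j - \sqrt{-\alpha}\,y_j \in E(A, -i)$ for each $j$, and since $\dim E(A,-i) = n$ it suffices to show these $n$ vectors are linearly independent over $k[\sqrt{\alpha}, i]$. My plan is to argue this by decomposing any $z \in k^{2n}$ as $z = z_+ + z_-$ with $z_\pm \in E(A, \pm i)$, and noting that the Galois involution $\sqrt{-\alpha} \mapsto -\sqrt{-\alpha}$ (which fixes $\sqrt{\alpha}$ and therefore fixes $A$) swaps $E(A, i)$ and $E(A, -i)$, forcing $z_- = \overline{z_+}$; the independence of $\{(z_j)_+\}$ (equivalent to the basis property of the $u_j$) then transfers to independence of $\{(z_j)_-\}$ via this conjugation, and hence to independence of $\{x_j - \sqrt{-\alpha}\,y_j\}$. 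The step requiring the most care is this Galois argument, since it implicitly assumes $\sqrt{-\alpha} \notin k[\sqrt{\alpha}]$; the degenerate case where that fails can be handled by running the analogous construction $u^{-}(z) = (B + \mu_- I)z$ in parallel to produce a basis of $E(A, -i)$ of the required form directly and matching it against the chosen $z_j$'s.
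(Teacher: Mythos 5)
Your construction is essentially the paper's: the operator $B+\mu_+I=\alpha^{-1}\left(\sqrt{\alpha}A+\sqrt{-\alpha}I\right)$ applied to a $k$-basis of $k^{2n}$ is exactly (up to the harmless scalar $\alpha$) the map the paper uses to produce eigenvectors of the form $x+\sqrt{-\alpha}y$ with $x,y\in k^{2n}$, and both arguments then select $n$ of these spanning vectors as a basis of $E(A,i)$ and invoke the preceding conjugation lemma to get the basis of $E(A,-i)$. Your extra care about linear independence of the conjugates (the Galois argument and the degenerate case $\sqrt{-\alpha}\in k[\sqrt{\alpha}]$) addresses a point the paper simply asserts, but it does not change the route.
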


\begin{proof}

Since $\Inn_A$ is Type 4, then we are assuming that $A \in \Sp(2n,k[\sqrt{\alpha}])$ and $A^2 = -I$. It follows that all eigenvalues of $A$ are $\pm i$. Since there are no repeated roots in the minimal polynomial of $A$, then we see that $A$ is diagonalizable. We wish to construct bases for $E(A,i)$ and $E(A,-i)$ such that all the vectors lie in $k[i]^{2n}$.  Let $\{z_1,...,z_{2n}\}$ be a basis for $k^{2n}$. For each $j$, let $u_j = (\sqrt{\alpha}A+\sqrt{-\alpha}I)z_j.$ Note that $$Au_j = A(\sqrt{\alpha}A+\sqrt{-\alpha}I)z_j = (\sqrt{\alpha}A^2+\sqrt{-\alpha}A)z_j = i(\sqrt{\alpha}A+\sqrt{-\alpha}I)z_j= iu_j.$$ So, $\{u_1,...,u_n\}$ must span $E(A,i)$. Thus, we can appropriately choose $n$ of these vectors and form a basis for $E(A,i)$. Note that each of these vectors lies in $k[i]^{2n}$. Label these basis vectors as $v_1,...,v_n$. We can write each of these vectors as $v_j = x_j+\sqrt{-\alpha}y_j$. By the previous lemma, we know that $x_j-\sqrt{-\alpha}y_j \in E(A,-i)$, and that these vectors form a basis for $E(A,-i)$.

\end{proof}

We are now able to prove results that characterize the matrices that induce Type 4 involutions, and then use these characterizations to find conditions on these involutions that are equivalent to isomorphy. We will have separate cases, depending on whether or not $\sqrt{-\alpha}$ lies in $k$. We begin by assuming that $\sqrt{-\alpha} \in k$. Since we are also assuming that $\sqrt{\alpha} \not \in k$, then it follows from these two assumptions that $\alpha$ and $-1$ lie in the same square class of $k$. Thus, we can assume in this case that $\alpha = -1$, which means $\sqrt{-\alpha} = 1$.

\begin{lem}
\label{Type4ClassYes}
Assume $\sqrt{-\alpha} \in k$ and suppose $\theta$ is a Type 4 involution of $\Sp(2n,k)$. Then, $A = X \left(\begin{smallmatrix}iI_{\frac{n}{2}} &0 \\ 0& -iI_{\frac{n}{2}}\end{smallmatrix}\right) X^{-1}$ for some $X \in \Gl(2n,k),$ where $X^TJX =  \left(\begin{smallmatrix}0 & X_1 \\ -X_1& 0\end{smallmatrix}\right)$ and $X_1$ is diagonal.
\end{lem}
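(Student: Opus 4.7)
The plan is to follow the template of Lemma \ref{Type3ClassYes} closely, exploiting that in the present setting the eigenspaces of $A$ again admit bases defined over $k$ itself. First, since $\sqrt{-\alpha}\in k$ while $\sqrt{\alpha}\notin k$, the elements $-1$ and $\alpha$ lie in the same square class of $k$; in particular $i\notin k$ and $k[\sqrt{\alpha}]=k[i]$. After rescaling $\sqrt{\alpha}$ by an element of $k^{*}$ I may assume $\sqrt{\alpha}=i$, so by Theorem \ref{charthms} every entry of $A$ is a $k$-multiple of $i$; equivalently $A=iA'$ for some $A'\in\Gl(2n,k)$ with $(A')^{2}=I$.

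Next, Lemma \ref{Type4Eigen} applied with $\sqrt{-\alpha}=1$ produces vectors $x_{j},y_{j}\in k^{2n}$ for $j=1,\dots,n$ such that $\{x_{j}+y_{j}\}$ is a basis of $E(A,i)$ and $\{x_{j}-y_{j}\}$ is a basis of $E(A,-i)$. In particular both eigenspaces are $k[i]$-extensions of $n$-dimensional $k$-subspaces $E(A,\pm i)\cap k^{2n}$ of $k^{2n}$.

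At this point the recursive construction from the proof of Lemma \ref{Type3ClassYes} carries over essentially verbatim. I pick a nonzero $a_{1}\in E(A,i)\cap k^{2n}$, use non-degeneracy of $\beta$ on $k^{2n}$ to produce $b_{1}\in E(A,-i)\cap k^{2n}$ with $\beta(a_{1},b_{1})\neq 0$, pass to the $\beta$-orthogonal complement of $\Span_{k}(a_{1},b_{1})$ inside $k^{2n}$, and iterate. The basic observation that same-eigenvalue eigenvectors are automatically $\beta$-orthogonal still holds here: for $a,a'\in E(A,i)$, since $A\in\Sp(2n,k[i])$ preserves $\beta$, $\beta(a,a')=\beta(Aa,Aa')=i^{2}\beta(a,a')=-\beta(a,a')$, so $\beta(a,a')=0$; the same argument works on $E(A,-i)$. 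Setting $X=(a_{1},\dots,a_{n},b_{1},\dots,b_{n})\in\Gl(2n,k)$, the eigenspace relations $Aa_{j}=ia_{j}$ and $Ab_{j}=-ib_{j}$ yield $X^{-1}AX=\left(\begin{smallmatrix}iI_{n}&0\\0&-iI_{n}\end{smallmatrix}\right)$, while the orthogonality relations yield $X^{T}JX=\left(\begin{smallmatrix}0&X_{1}\\-X_{1}&0\end{smallmatrix}\right)$ with $X_{1}=\diag(\beta(a_{1},b_{1}),\dots,\beta(a_{n},b_{n}))$ diagonal.

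The one delicate point is confirming that the $\beta$-orthogonal-complement step in the recursion keeps us inside $k^{2n}$. This is automatic: $\beta$ restricted to $k^{2n}\times k^{2n}$ is $k$-valued, each eigenspace meets $k^{2n}$ in a $k$-subspace, and the $\beta$-orthogonal complement of a $k$-subspace inside a $k$-subspace is again a $k$-subspace of the expected dimension, so the recursion terminates after $n$ steps and produces the required $X\in\Gl(2n,k)$.
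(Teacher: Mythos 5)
Your proof is correct and follows essentially the same route as the paper: reduce to $\alpha=-1$ (so $k[\sqrt{\alpha}]=k[i]$ and the eigenspaces $E(A,\pm i)$ are spanned by vectors in $k^{2n}$ via Lemma \ref{Type4Eigen}), then run the recursive pairing construction of Lemma \ref{Type3ClassYes} to build $k$-rational bases $a_1,\dots,a_n$ of $E(A,i)\cap k^{2n}$ and $b_1,\dots,b_n$ of $E(A,-i)\cap k^{2n}$ with $\beta(a_j,b_l)\ne 0$ iff $j=l$, and assemble $X$. Your explicit check that $\beta(a,a')=\beta(Aa,Aa')=-\beta(a,a')$ forces same-eigenvalue vectors to be $\beta$-orthogonal is a point the paper uses only implicitly, and is a welcome clarification.
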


\begin{proof}

We know from Lemma \ref{Type4Eigen} that we have bases for $E(A,i)$ and $E(A,-i)$ that lie in $k^{2n}$. We will show that we can in fact choose bases $a_1,...,a_{n}$ for $E(A,i) \cap k^{2n}$ and $b_1,...,b_{n}$ for $E(A,-i) \cap k^{2n}$ such that $\beta(a_j,a_l) = 0 = \beta(b_j,b_l)$ and $\beta(a_j, b_l)$ is nonzero if and only if $j=l$. We will build these bases recursively.

First, we know that we can choose some nonzero $a_1 \in E(A,i) \cap k^{2n}$. Then, since $\beta$ is non degenerate, we can choose a vector $t$ such that $\beta(a_1, t) \ne 0$. We note that $E(A,i) \oplus E(A,-i) = k^{2n}$, so we can choose $t_{i} \in E(A,i)\cap k^{2n}$ and $t_{-i} \in E(A,-i)\cap k^{2n}$ such that $t = t_{i}+t_{-i}$. Since $\beta(a_1, t_{i}) = 0$, then it follows that $\beta(a_1, t_{-i}) \in k$ is nonzero. Let $b_1 =  t_{-i}.$

Let $E_1 = \Span_k(a_1,b_1)$ and let $F_1$ be the orthogonal complement of $E_1$ in $k^{2n}$. Since the system of linear equations $$\beta(a_1,x) = 0$$ $$\beta(b_1,x) =0$$ has $2n-2$ free variables, then we see that $F_1$ has dimension $2n-2$. 

We now wish to find $a_2 \in F_2 \cap E(A,i)$. Similar to the construction in the previous lemma, we can choose $x \in F_1$, and let $a_2 = \sqrt{\alpha}Ax+\sqrt{-\alpha}x$. Now we want $b_2 \in F_2 \cap E(A-,i)$ such that $\beta(a_2, b_2)$ is nonzero. Since $\beta|_{F_1}$ is non degenerate, then there exists some $y \in F_2$ such that $\beta(a_2,y) \ne 0$. Similar to the construction of $b_1$, we see that this implies the existence a vector $b_2$ that fits our criteria. 

Now, we let $E_2 = \Span_k(a_1,a_2,b_1,b_2)$ and let $F_2$ be the orthogonal complement of $E_2$ in $k^n$. We continue this same argument $n$ times, until we have the bases that we wanted to find. Let $$X= (a_1,...,a_{n}, b_1,...,b_{n}).$$ Then, the result follows.
\end{proof}

Here is an example of a Type 4 involution when $\sqrt{-\alpha} \in k$.

\begin{beisp}
Let $k$ be $\mathbb{R}$. So, $\alpha = -1$. Notice that $\sqrt{-\alpha} =1 \in \mathbb{R}$. Consider the matrix $$A = i \left(\begin{array}{cccc}0 & 1 & 0 & 0 \\1 & 0 & 0 & 0 \\0 & 0 & 0 & -1 \\0 & 0 & -1 & 0\end{array}\right).$$

$\Inn_A$ is a Type 4 involution of $\Sp(4,k)$ since $A^2 =-I$ and each entry of $A$ is a $k$-multiple of $i$. A basis for $E(A,1)$ that matches the conditions of Lemma \ref{Type4ClassYes} is formed by the vectors $v_1 = \frac{\sqrt{2}}{2} \left(\begin{smallmatrix}1 \\1 \\0 \\0\end{smallmatrix}\right)$ and $ v_2=  \frac{\sqrt{2}}{2} \left(\begin{smallmatrix}0 \\0 \\-1 \\1\end{smallmatrix}\right).$  It can also be shown that $v_3 =   \frac{\sqrt{2}}{2} \left(\begin{smallmatrix}0 \\0 \\1 \\1\end{smallmatrix}\right)$ and $ v_4=  \frac{\sqrt{2}}{2} \left(\begin{smallmatrix}1 \\-1 \\0 \\0\end{smallmatrix}\right)$ are basis vectors for $E(A,-1)$ that also match the conditions of Lemma \ref{Type4ClassYes}.

Following the notation of the previous lemma, we have $$X = \frac{\sqrt{2}}{2} \left(\begin{array}{cccc}1 & 0 & 0 & 1 \\1 & 0 & 0 & -1 \\0 & -1 & 1 & 0 \\0 & 1 & 1 & 0\end{array}\right),$$ where $X^TJX = J$ and $A = X \left(\begin{smallmatrix}iI_{\frac{n}{2}} &0 \\ 0& -iI_{\frac{n}{2}}\end{smallmatrix}\right) X^{-1}$ .
\end{beisp}

Now we characterize the isomorphy classes of Type 4 involutions in the case where $\sqrt{-\alpha} \in k$.

\begin{theorem}
\label{type4lemYes}
Assume that $\sqrt{-\alpha} \in k$. Then, if $\Inn_A$ and $\Inn_B$ are both Type 4 involutions of $\Sp(2n,k)$ such that $A, B \in \Sp(2n, k[\sqrt{\alpha}])$, then $\Inn_A$ and $\Inn_B$ are isomorphic over $\Sp(2n,k)$.
\end{theorem}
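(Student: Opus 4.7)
The proof will proceed in close analogy with Theorem \ref{type3lemYes} (the $i\in k$ Type 3 case), since the structural characterization in Lemma \ref{Type4ClassYes} has exactly the same shape as that of Lemma \ref{Type3ClassYes}. First I would apply Lemma \ref{Type4ClassYes} to obtain matrices $X,Y\in\Gl(2n,k)$ with
\[
X^{-1}AX=\begin{pmatrix}iI & 0\\ 0 & -iI\end{pmatrix}=Y^{-1}BY,\qquad X^TJX=\begin{pmatrix}0 & X_1\\-X_1 & 0\end{pmatrix},\qquad Y^TJY=\begin{pmatrix}0 & Y_1\\-Y_1 & 0\end{pmatrix},
\]
with $X_1,Y_1$ invertible and diagonal. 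The crucial point is that although $A,B$ have entries in $k[\sqrt\alpha]$, the matrices $X,Y$ produced by the lemma have entries in $k$ itself; this is what will let us construct a conjugator in $\Sp(2n,k)$ rather than merely in $\Sp(2n,k[\sqrt\alpha])$.

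Next I would build the conjugator. Since $X_1,Y_1$ are invertible diagonal matrices, I can pick $R_1,R_2\in\Gl(n,k)$ (for instance $R_1=I$ and $R_2=X_1^{-1}Y_1$) so that $R_1^TX_1R_2=Y_1$. Set $R=\left(\begin{smallmatrix}R_1 & 0\\ 0 & R_2\end{smallmatrix}\right)$ and $Q=XRY^{-1}$. Since $X,R,Y$ all have entries in $k$, so does $Q$.

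I would then verify the two required properties of $Q$. For $Q\in\Sp(2n,k)$, a direct block computation gives
\[
Q^TJQ=(Y^{-1})^TR^T(X^TJX)RY^{-1}=(Y^{-1})^T\begin{pmatrix}0 & R_1^TX_1R_2\\-R_2^TX_1R_1 & 0\end{pmatrix}Y^{-1}=(Y^{-1})^T(Y^TJY)Y^{-1}=J,
\]
using that $X_1,Y_1$ are diagonal so $R_2^TX_1R_1=Y_1$ as well. For the conjugation $Q^{-1}AQ=B$, note that $R$ is block diagonal and therefore commutes with $\left(\begin{smallmatrix}iI & 0\\ 0 & -iI\end{smallmatrix}\right)$, so
\[
Q^{-1}AQ=YR^{-1}(X^{-1}AX)RY^{-1}=YR^{-1}R\begin{pmatrix}iI & 0\\ 0 & -iI\end{pmatrix}Y^{-1}=Y(Y^{-1}BY)Y^{-1}=B.
\]
Hence $\Inn_Q^{-1}\Inn_A\Inn_Q=\Inn_B$, establishing that $\Inn_A$ and $\Inn_B$ are isomorphic over $\Sp(2n,k)$.

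There is essentially no obstacle once Lemma \ref{Type4ClassYes} is in hand; the only subtlety to watch is the compatibility of the block-diagonal $R$ with the symplectic form. Because $X_1$ and $Y_1$ are diagonal (so symmetric), the two conditions $R_1^TX_1R_2=Y_1$ and $R_2^TX_1R_1=Y_1$ coincide, which is why a block-diagonal $R$ suffices. The argument mirrors the Type 3 proof almost verbatim, the point being that the hypothesis $\sqrt{-\alpha}\in k$ (forcing $\alpha\equiv -1\pmod{(k^*)^2}$) is exactly what makes the eigenvalues $\pm i$ appear on the same footing as in Lemma \ref{Type3ClassYes}, so that the normal form obtained has coefficients in $k$.
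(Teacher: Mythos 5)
Your proposal is correct and follows essentially the same route as the paper: apply Lemma \ref{Type4ClassYes} to both $A$ and $B$, build a block-diagonal $R$ with $R_1^TX_1R_2=Y_1$, and conjugate by $Q=XRY^{-1}$, checking that $Q$ is symplectic and intertwines $A$ with $B$. Your explicit remark that diagonality of $X_1,Y_1$ makes the two conditions $R_1^TX_1R_2=Y_1$ and $R_2^TX_1R_1=Y_1$ coincide is a small point the paper leaves implicit, but the argument is the same.
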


\begin{proof}

Suppose we have two such involutions of $\Sp(2n,k)$. Let them be represented by matrices $A,B \in \Sp(2n,k)$. By the previous Lemma, we can choose $X, Y \in \Gl(n,k)$ such that $$X^{-1}AX = \left(\begin{array}{cc}iI_n & 0 \\0 & -iI_n\end{array}\right) = Y^{-1}BY,$$ $$X^TJX = \left(\begin{array}{cc}0 & X_1 \\-X_1 & 0\end{array}\right),$$ and $$  Y^TJY\left(\begin{array}{cc}0 & Y_1 \\-Y_1 & 0\end{array}\right),$$ where $X_1$ and $Y_1$ are diagonal.

Since $X_1$ and $Y_1$ are both invertible diagonal matrices, then we can choose $R_1$ and $R_2 \in \Gl(\frac{n}{2},k)$ such that $Y_1 = R_1^TX_1R_2$. Let $R = \left(\begin{smallmatrix}R_1 & 0 \\0 & R_2\end{smallmatrix}\right)$ and $Q = XRY^{-1}$. It follows from this that $R^TX^TJXR = Y^TJY$. We will show that $Q \in \Sp(2n,k)$ and $Q^{-1}AQ = B$. This will then prove that $\Inn_A$ and $\Inn_B$ lie in the same isomorphy class.

First we show that $Q \in \Sp(2n,k)$. Note that $$Q^TJQ = (XRY^{-1})^TJ(XRY^{-1}) = (Y^{-1})^TR^T(X^TJX)RY^{-1}$$ $$ = (Y^{-1})^T(Y^TJY)Y^{-1} = J,$$ which proves this claim. 

Lastly, we show that $Q^{-1}AQ = B$. We first note that $R$ and $\left(\begin{smallmatrix}iI & 0 \\0 & -iI\end{smallmatrix}\right)$ commute. Then, we see that
\begin{align*}
Q^{-1}AQ &= (XRY^{-1})^{-1}A (XRY^{-1})\\ 
&= YR^{-1}(X^{-1}AX)RY^{-1}\\
&= Y R^{-1}\left(\begin{array}{cc}-iI & 0 \\0 & iI\end{array}\right)R Y^{-1}\\ 
&= Y R^{-1}R\left(\begin{array}{cc}-iI & 0 \\0 & iI\end{array}\right) Y^{-1}\\
&= Y \left(\begin{array}{cc}-iI & 0 \\0 & iI\end{array}\right) Y^{-1}\\
&= B.
\end{align*}
\end{proof}

We now examine the case where $\sqrt{-\alpha} \not \in k$. We begin with a characterization of the matrices that induce Type 4 involutions in this case.

\begin{lem}
\label{Type4ClassNo}
Assume $\sqrt{-\alpha} \not \in k$. Suppose $\theta = \Inn_A$ is a Type 4 involution of $\Sp(2n,k)$. Then, $A = \frac{\sqrt{\alpha}}{\alpha}U \left(\begin{smallmatrix}0 & I_{n} \\ -\alpha I_{n} & 0\end{smallmatrix}\right) U^{-1} $ for $$U  = \left(\begin{array}{cccccccccc}a_1 & a_2 & \cdots & a_n &b_1 & b_2 & \cdots & b_n  \end{array}\right)\in \Gl(2n,k),$$ where the $a_j+\sqrt{-\alpha}b_j$ are a basis for $E(A,i)$, the $a_j-\sqrt{-\alpha}b_j$ are a basis for $E(A,-i)$, and $U^TJU = \left(\begin{smallmatrix}0 & U_1 \\-U_1 & 0\end{smallmatrix}\right)$, where $U_1$ is diagonal.
\end{lem}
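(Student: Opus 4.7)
The plan is to imitate the proof of Lemma \ref{Type3ClassNo}, which handled the analogous Type 3 case where $i \not\in k$. The first task is to set up the key algebraic identity that replaces $A^T J = -JA$. Writing $A = \sqrt\alpha\,\tilde A$ with $\tilde A \in \M(2n,k)$ (possible since each entry of $A$ is a $k$-multiple of $\sqrt\alpha$), the condition $A^2 = -I$ becomes $\tilde A^2 = -\tfrac{1}{\alpha}I$, and the symplectic condition combined with $A^{-1} = -A$ gives $\tilde A^T J = -J\tilde A$. Taking the transpose, $(\tilde A^T J)^T = J^T \tilde A = -J\tilde A = \tilde A^T J$, so $\tilde A^T J$ is a nonzero symmetric invertible matrix, which is the Type 4 substitute for the symmetry of $A^T J$ exploited in the Type 3 argument.

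Next I would construct the basis vectors recursively, in exact parallel to the Type 3 argument. By Lemma \ref{Type4Eigen}, for any $z \in k^{2n}$ the vector $\alpha\tilde A z + \sqrt{-\alpha}\,z$ lies in $E(A,i)$, with conjugate in $E(A,-i)$. Setting $a := \alpha\tilde A z$ and $b := z$, expanding $\beta(a + \sqrt{-\alpha}\,b,\, a - \sqrt{-\alpha}\,b)$ and using skew-symmetry of $\beta$ collapses it to $-2\sqrt{-\alpha}\,\alpha\, z^T \tilde A^T J z$, so the task reduces to finding $z$ with $z^T \tilde A^T J z \ne 0$. Because $\tilde A^T J$ is a nonzero symmetric invertible matrix, such a $z$ exists by the same trick used in Lemma \ref{Type3ClassNo}: either some standard basis vector $e_j$ already works, or $z = e_j + e_l$ works for some $j \ne l$ with $e_j^T \tilde A^T J e_l \ne 0$. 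Setting $a_1 := \alpha\tilde A z$ and $b_1 := z$ then gives the first pair, with $\beta(a_1, b_1) \ne 0$.

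The recursion hinges on verifying that $\tilde A$ preserves the $\beta$-orthogonal complement $F_1$ of $E_1 := \Span_k(a_1,b_1)$ in $k^{2n}$. The identities $\tilde A a_1 = -b_1$ and $\tilde A b_1 = \tfrac{1}{\alpha}a_1$ follow immediately from $\tilde A^2 = -\tfrac{1}{\alpha}I$; combined with the skew-adjointness $\beta(\tilde A x, y) = -\beta(x, \tilde A y)$ (a restatement of $\tilde A^T J = -J\tilde A$), this forces $\tilde A(F_1) \subseteq F_1$. Since $\beta|_{E_1}$ has Gram matrix $\bigl(\begin{smallmatrix} 0 & c \\ -c & 0 \end{smallmatrix}\bigr)$ with $c = \beta(a_1,b_1) \ne 0$, the decomposition $k^{2n} = E_1 \oplus F_1$ is $\beta$-orthogonal and nondegenerate on both summands, and $\tilde A|_{F_1}$ still satisfies $(\tilde A|_{F_1})^2 = -\tfrac{1}{\alpha}I$. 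Iterating $n$ times produces $a_1,\dots,a_n,b_1,\dots,b_n \in k^{2n}$ with $\beta(a_j,a_l) = \beta(b_j,b_l) = \beta(a_j,b_l) = 0$ for $j \ne l$ and $\beta(a_j,b_j) \ne 0$. Setting $U = (a_1,\dots,a_n,b_1,\dots,b_n)$ yields $U^T J U = \bigl(\begin{smallmatrix} 0 & U_1 \\ -U_1 & 0 \end{smallmatrix}\bigr)$ with $U_1$ diagonal, and from $A a_j = -\sqrt\alpha\,b_j$ and $A b_j = \tfrac{\sqrt\alpha}{\alpha}a_j$ the formula $A = \tfrac{\sqrt\alpha}{\alpha}\, U \bigl(\begin{smallmatrix} 0 & I_n \\ -\alpha I_n & 0 \end{smallmatrix}\bigr) U^{-1}$ follows by reading off columns of $AU$.

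The main obstacle is essentially the same as in Lemma \ref{Type3ClassNo}: at each stage of the recursion one must produce a vector in the current orthogonal complement for which a certain quadratic form is nonzero, and one must show $\tilde A$ restricts compatibly to that complement. Both points reduce to the single identity $\tilde A^T J = -J\tilde A$, so once this identity is established the rest of the argument is the same bookkeeping as in the Type 3 case, merely with the scalar $\sqrt\alpha$ (respectively $\sqrt{-\alpha}$) playing the role previously played by $1$ (respectively $i$).
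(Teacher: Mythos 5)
Your proposal is correct and follows essentially the same route as the paper's own proof: the same recursive construction of hyperbolic pairs $a_j=\sqrt{\alpha}Ax,\ b_j=x$ from vectors $x$ with $\beta(Ax,x)\neq 0$, the same symmetry argument for $A^TJ=-JA$ (your $\tilde A^TJ$ is just $\sqrt{\alpha}^{-1}A^TJ$) to guarantee such $x$ exist, and the same passage to orthogonal complements. Your explicit verification that $\tilde A$ preserves $F_1$ is a detail the paper leaves implicit, but it is not a different approach.
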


\begin{proof}

We know from Lemma \ref{Type4Eigen} that we have bases for $E(A,i)$ and $E(A,-i)$ that lie in $k[\sqrt{-\alpha}]^{2n}$. We will show that we can in fact choose bases $a_1+\sqrt{-\alpha}b_1,...,a_{n}+\sqrt{-\alpha}b_{n}$ for $E(A,i) \cap k[\sqrt{-\alpha}]^{2n}$ and $a_1-\sqrt{-\alpha}b_1,...,a_{n}-\sqrt{-\alpha}b_{n}$ for $E(A,-i) \cap k[\sqrt{-\alpha}]^{2n}$ such that $\beta(a_j+ib_j, a_l-ib_l)$ is nonzero if and only if $j=l$. From this, we will be able to show that $\beta(a_j,a_l) = 0 = \beta(b_j,b_l)$ when $j \ne l$ and $\beta(a_j, b_l) = 0$ for all $j$ and $l$. We will build these bases recursively.

Recall that given any vector $x \in k^{2n}$, we know that $\sqrt{\alpha}Ax+\sqrt{-\alpha}x \in E(A,i)$. We want to choose $x\in k^{2n}$ such that $x^TA^TJx \ne 0$. That is, such that $\beta(Ax,x) \ne 0$. (The reasons for this will become apparent.) If $e_j^TA^TJe_j \ne 0$, we can let $x=e_j$. Suppose that this doesn't occur for any $j$.

 Since $A^TJ$ is invertible, we know that for more than $2n$ pairs of $j$ and $l$ we have  $e_j^TA^TJe_l \ne 0$. Also, we see that since $A$ is symplectic and $A^TJA = J$, then we have that 
$$A^TJ = JA^{-1} = JA^3 = -JA$$ and that $$(A^TJ)^T = J^TA = -JA = A^TJ.$$
That is, $A^TJ$ is symmetric. So, $e_j^TA^TJe_j = e_l^TA^TJe_l $.
Then, we can let $x = e_j+e_l$. Then, we have 
$$x^TA^TJx = e_jA^TJe_l+e_lA^TJe_j = 2e_jA^TJe_l \ne 0.$$ In either case, we have many choices for $x$.

Let $x \in k^{2n}$ be a vector from above. We have $\sqrt{\alpha}Ax+\sqrt{-\alpha}x \in E(A,i)$. Let $a_1 = \sqrt{\alpha}Ax$ and $b_1 = x$. So, $a_1+\sqrt{-\alpha}b_1 \in E(A,i)$ and $a_1-\sqrt{-\alpha}b_1 \in E(A,-i)$.  From this, it follows that 
\begin{align*}
\beta(a_1+\sqrt{-\alpha}b_1, a_1-\sqrt{-\alpha}b_1) &= (\beta(\sqrt{\alpha}Ax,\sqrt{\alpha}Ax) +\alpha\beta(x,x))+\sqrt{-\alpha}(-\beta(\sqrt{\alpha}Ax,x)+\beta(x,\sqrt{\alpha}Ax)\\
&=2\alpha i\beta (x,Ax)\\
&\ne 0.
\end{align*}

Let $E_1 = \Span_{k[\sqrt{-\alpha}]}(a_1+\sqrt{-\alpha}b_1, a_1-\sqrt{-\alpha}b_1) = \Span_{k[\sqrt{-\alpha}]}(a_1,b_1)$, and let $F_1$ be the orthogonal complement of $E_1$ over $k[\sqrt{-\alpha}]$. $F_1$ has dimension $2n-2$, and $\beta|_{F_1}$ is nondegenerate. So, we can find a nonzero vector $x \in F_1 \cap k^{2n}$ such that $\beta|_{F_1}(x,-Ax) \ne 0$. So, as in the last case, let $a_2 = \sqrt{\alpha}Ax$ and $b_2 = x$. As before, we have $\beta(a_2+\sqrt{-\alpha}b_2, a_2-\sqrt{-\alpha}b_2) \ne 0$.

Let $E_2 = \Span_{k[\sqrt{-\alpha}]}(a_1,a_2,b_1,b_2)$, and let $F_2$ be the orthogonal complement of $E_2$ over $k[\sqrt{-\alpha}]$. In this manner, we can create the bases that we noted in the opening paragraph of this proof. 

Note that we always have $$0 = \beta(a_j+\sqrt{-\alpha}b_j, a_l+\sqrt{-\alpha}b_l) = (\beta(a_j,a_l)-\alpha \beta(b_j,b_l))+\sqrt{-\alpha}(\beta(a_j,b_l)+\beta(b_j,a_l)),$$ and when $j \ne l$ we have $$0 = \beta(a_j+\sqrt{-\alpha}b_j, a_l-\sqrt{-\alpha}b_l) = (\beta(a_j,a_l)+\alpha \beta(b_j,b_l))+\sqrt{-\alpha}(-\beta(a_j,b_l)+\beta(b_j,a_l)).$$

This tells us that when $j \ne l$ that $$\beta(a_j,b_l) = \beta(a_j,a_l) = \beta(b_j,b_l) = 0.$$ 

When $j = l$, we know that $\beta(b_j,b_j) =0= \beta(a_j,a_j)$. Lastly, we see that $\beta(a_j, b_j)= -\beta(b_j,a_j)$. 

Let $$U = (a_1,...,a_{n},b_1,...,b_{n}).$$ Then, it follows that $U^TJU = \left(\begin{smallmatrix}0 & U_1 \\-U_1 & 0\end{smallmatrix}\right)$ where $U_1$ is a diagonal $n \times n$ matrix.

Since $Aa_j = -\sqrt{\alpha}b_j$ and $Ab_j = \frac{\sqrt{\alpha}}{\alpha} a_j$, then we have  $A = \frac{\sqrt{\alpha}}{\alpha}U \left(\begin{smallmatrix}0 & I_{n} \\ -\alpha I_{n} & 0\end{smallmatrix}\right) U^{-1}$.

We have shown what was needed.
\end{proof}

The following is an example of a Type 4 involution where $\sqrt{-\alpha} \not \in k$.

\begin{beisp}
Let $k = \mathbb{F}_5$ and consider $\alpha = 2$. Note that $\sqrt{-\alpha} = \sqrt{3} \not \in k$.

Consider the matrix $$A = \sqrt{2} \left(\begin{array}{cccc}1 & 0 & 2 & 0 \\0 & 1 & 0 & 2 \\3 & 0 & 4 & 0 \\0 & 3 & 0 & 4\end{array}\right).$$

$\Inn_A$ is a Type 4 involution of $\Sp(4,k)$ since $A^2 =-I$ and each entry of $A$ is a $k$-multiple of $\sqrt{2}$. A basis for $E(A,1)$ that matches the conditions of Lemma \ref{Type4ClassNo} is formed by the vectors $$v_1 = \left(\begin{array}{c} 1 \\0 \\4 \\0\end{array}\right)+\sqrt{2}\left(\begin{array}{c} 1 \\0 \\1 \\0\end{array}\right)$$ and $$ v_2=\left(\begin{array}{c}0 \\ 1 \\0 \\4\end{array}\right)+\sqrt{2}\left(\begin{array}{c}0 \\ 1 \\0 \\1\end{array}\right).$$ 

It can be shown that $$v_3 = \left(\begin{array}{c} 1 \\0 \\4 \\0\end{array}\right)-\sqrt{2}\left(\begin{array}{c} 1 \\0 \\1 \\0\end{array}\right)$$ and $$ v_4=\left(\begin{array}{c}0 \\ 1 \\0 \\4\end{array}\right)-\sqrt{2}\left(\begin{array}{c}0 \\ 1 \\0 \\1\end{array}\right)$$   are basis vectors for $E(A,-1)$ that also match the conditions of Lemma \ref{Type4ClassNo}.

Following the notation of the Lemma \ref{Type4ClassNo}, we have $$U = \left(\begin{array}{cccc}1 & 0 & 1 & 0 \\0 & 1 & 0 & 1 \\4 & 0 & 1 & 0 \\0 & 4 & 0 & 1\end{array}\right),$$ where $U^TJU = \left(\begin{smallmatrix}0 & U_1 \\-U_1 & 0\end{smallmatrix}\right)$ for $U_1 = 2I$ and $A = \frac{\sqrt{2}}{2}U\left(\begin{smallmatrix}0 & I \\-2I & 0\end{smallmatrix}\right)U^{-1}$ .
\end{beisp}

We now find conditions on Type 4 involutions where $\sqrt{-\alpha} \not \in k$ that are equivalent to isomorphy.

\begin{theorem}
\label{type4lemNo}
Assume $\sqrt{-\alpha} \not \in k$. Then, if $\Inn_A$ and $\Inn_B$ are both Type 4 involutions of $\Sp(2n,k)$ where the entries of $A$ and $B$ are $k$-multiples of $\sqrt{\alpha}$, then $\Inn_A$ and $\Inn_B$ are isomorphic over $\Sp(2n,k)$.
\end{theorem}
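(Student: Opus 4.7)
The plan is to mirror the strategy of the proof of Theorem \ref{type3lemYes} in its $i \notin k$ subcase: lift the problem to the quadratic extension $K = k[\sqrt{-\alpha}]$, invoke the appropriate previously proven classification over $K$ (Theorem \ref{type3lemYes} if $i \in k$, or Theorem \ref{type4lemYes} if $i \notin k$), and then descend the conjugating matrix back to $k$.

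First, by Lemma \ref{Type4ClassNo}, write $A = \frac{\sqrt{\alpha}}{\alpha} U C U^{-1}$ and $B = \frac{\sqrt{\alpha}}{\alpha} \widetilde{V} C \widetilde{V}^{-1}$ with $C = \left(\begin{smallmatrix} 0 & I_n \\ -\alpha I_n & 0 \end{smallmatrix}\right)$, $U = (a_1,\dots,a_n,b_1,\dots,b_n) \in \Gl(2n,k)$, $\widetilde{V} \in \Gl(2n,k)$, and with $U^T J U$, $\widetilde{V}^T J \widetilde{V}$ of the prescribed block form with diagonal $U_1$, $\widetilde{V}_1$. Form the ``complexified'' eigenvector matrix $X = (a_1+\sqrt{-\alpha}b_1,\dots,a_n+\sqrt{-\alpha}b_n, a_1-\sqrt{-\alpha}b_1,\dots,a_n-\sqrt{-\alpha}b_n) \in \Gl(2n,K)$. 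Writing $X = UT$ for $T = \left(\begin{smallmatrix} I_n & I_n \\ \sqrt{-\alpha}I_n & -\sqrt{-\alpha}I_n \end{smallmatrix}\right)$, a direct calculation gives $X^{-1}AX = \left(\begin{smallmatrix} iI_n & 0 \\ 0 & -iI_n \end{smallmatrix}\right)$ and $X^T J X = \left(\begin{smallmatrix} 0 & -2\sqrt{-\alpha}\,U_1 \\ 2\sqrt{-\alpha}\,U_1 & 0 \end{smallmatrix}\right)$, putting $A$ into the form required by Lemma \ref{Type4ClassYes} applied over $\Sp(2n,K)$; an analogous $\widetilde{X}$ works for $B$.

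Since $\sqrt{-\alpha} \in K$, the relevant previously proven classification theorem supplies some $Q_K \in \Sp(2n,K)$ with $Q_K^{-1}AQ_K = B$. Set $Y = Q_K^{-1}X$; then $Y^{-1}BY = X^{-1}AX$ and $Y^T J Y = X^T J X$. The crucial step is to show that the columns of $Y$ come in $\sqrt{-\alpha}$-conjugate pairs, i.e., $Y = (c_1+\sqrt{-\alpha}d_1,\dots,c_n+\sqrt{-\alpha}d_n, c_1-\sqrt{-\alpha}d_1,\dots,c_n-\sqrt{-\alpha}d_n)$ for some $c_j, d_j \in k^{2n}$. This is argued in parallel to the corresponding step in Theorem \ref{type3lemYes}'s no-case: one applies the nontrivial Galois element $\sigma$ of $K/k$ to the two identities above, using that $\sigma(B) = \pm B$ and $\sigma(i) = \pm i$ (with signs consistent depending on whether $i \in k$), and exploits the rigidity of the symplectic/diagonal normal form.

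With the column pairing established, define $V = (c_1,\dots,c_n,d_1,\dots,d_n) \in \Gl(2n,k)$. Reversing the construction of $X$ from $U$, one checks that $V$ plays the analogous role for $B$: $B = \frac{\sqrt{\alpha}}{\alpha} V C V^{-1}$ and $V^T J V = U^T J U$. Setting $Q = U V^{-1}$, the equality $V^T J V = U^T J U$ yields $Q^T J Q = J$, so $Q \in \Sp(2n,k)$, and $Q^{-1}AQ = V\bigl(\tfrac{\sqrt{\alpha}}{\alpha} C\bigr) V^{-1} = B$ as desired. The main obstacle will be the descent step: proving the column-pairing of $Y$, which (as in the Type 3 analogue) requires careful handling of the Galois action in each of the two subcases $i \in k$ and $i \notin k$, together with the uniqueness features of the normal form of Lemma \ref{Type4ClassYes}.
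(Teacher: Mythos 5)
Your proposal follows essentially the same route as the paper: reduce via Lemma \ref{Type4ClassNo}, pass to $K=k[\sqrt{-\alpha}]$ where $\Inn_A,\Inn_B$ become Type 3 involutions (when $k[\sqrt{-\alpha}]=k[\sqrt{\alpha}]$, i.e.\ $i\in k$) or Type 4 involutions with $\sqrt{-\alpha}$ in the base field (otherwise), pull a symplectic conjugator $Q$ from the already-proved theorem over $K$, set $Y=Q^{-1}X$, extract $V$ from the conjugate-paired columns of $Y$, and take $UV^{-1}$ as the conjugator over $k$. One remark worth making: the step you single out as the main obstacle --- that the columns of $Y=Q^{-1}X$ really do come in $\sqrt{-\alpha}$-conjugate pairs --- is precisely the step the paper dispatches with the single remark that ``$Y$ was obtained from $X$ via row operations,'' which on its own does not establish the pairing, since $Q$ has entries in $K$ rather than $k$ and left multiplication by such a matrix does not automatically commute with $\sqrt{-\alpha}$-conjugation of columns. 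So you have correctly located the crux, and your proposed Galois-descent treatment (applying $\sigma$ to the identities $Y^{-1}BY=X^{-1}AX$ and $Y^TJY=X^TJX$ and using $\sigma(B)=\pm B$, $\sigma(i)=\pm i$) is more explicit than what the paper offers; to close it completely you would still need to resolve the residual ambiguity, namely that these identities only force $\sigma(Y)$ to agree with $Y$ times the block swap up to a block-diagonal matrix commuting with the normal form, so some additional rigidity (or a direct normalization of the columns) is required to conclude the exact pairing.
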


\begin{proof}

By Lemma \ref{Type4ClassNo}, we can choose a matrix $U \in \Gl(n,k)$ such that $$A = \frac{\sqrt{\alpha}}{\alpha} U \left(\begin{array}{cc}0 &  I_{\frac{n}{2}} \\ -\alpha I_{\frac{n}{2}} & 0\end{array}\right) U^{-1}$$ for $$U  = \left(\begin{array}{cccccccccc}a_1 & a_2 & \cdots & a_\frac{n}{2} &b_1 & b_2 & \cdots & b_\frac{n}{2}  \end{array}\right),$$ where the $a_j+\sqrt{-\alpha}b_j$ are a basis for $E(A,i)$, the $a_j-\sqrt{-\alpha}b_j$ are a basis for $E(A,-i)$, and $U^TJU = \left(\begin{smallmatrix}0 & U_1 \\-U_1 & 0 \end{smallmatrix}\right)$ for diagonal $U_1$.

  Consider $\Inn_A$ and $\Inn_B$ as involutions of $\Sp(2n,k[\sqrt{-\alpha}])$. If $k[\sqrt{-\alpha}] = k[\sqrt{\alpha}]$, then these are Type 3 involutions of $\Sp(2n,k[\sqrt{-\alpha}])$, since $A$ and $B$ would have entries in the field, and $i \in k[\sqrt{-\alpha}]$. Otherwise, if $k[\sqrt{-\alpha}] \ne k[\sqrt{\alpha}]$, then these are Type 4 involutions where $ \sqrt{-\alpha} \in k[\sqrt{-\alpha}]$.

Let 
$$X  = (a_1+\sqrt{-\alpha}b_1,...,a_{\frac{n}{2}}+\sqrt{-\alpha}b_{\frac{n}{2}}, a_1-\sqrt{-\alpha}b_1,...,a_{\frac{n}{2}}-\sqrt{-\alpha}b_{\frac{n}{2}}).$$ 

By construction, we see that $X$ is a matrix that satisfies the conditions of Lemma \ref{Type3ClassNo} or Lemma \ref{Type4ClassYes} for the group $\Sp(2n,k[\sqrt{\alpha}])$. We note that $X_1 = -2iU_1$. We also know by Corollary \ref{CorType3} or Theorem \ref{type4lemYes} that $\Inn_A$ and $\Inn_B$ are isomorphic (when viewed as involutions of $\So(n,k[\sqrt{-\alpha}],\beta)$) over $\oo(n,k[\sqrt{-\alpha}],\beta)$. So, we can choose $Q_{\alpha} \in \Sp(2n,k[\sqrt{-\alpha}])$ such that $Q_{\alpha}^{-1}AQ_{\alpha} = B$. Let $Y = Q_{\alpha}^{-1}X$. Since $Y$ is constructed by doing row operations on $X$, then we can write 
$$Y  = (c_1+\sqrt{-\alpha}d_1,...,c_{\frac{n}{2}}+\sqrt{-\alpha}d_{\frac{n}{2}}, c_1-\sqrt{-\alpha}d_1,...,c_{\frac{n}{2}}-\sqrt{-\alpha}c_{\frac{n}{2}}),$$ 
where $c_j, d_j \in k^n$. We now show a couple of facts about $Y$.

First, we note that since $Y$ was obtained from $X$ via row operations, then for $1 \le j \le \frac{n}{2}$, the $j$th and $\frac{n}{2}+j$th columns are $i$-conjugates of one another.

Next, we observe that 
\begin{align*}
Y^{-1}BY &= (Q_{\alpha}^{-1}X)^{-1}B (Q_{\alpha}^{-1}X)\\ 
&= X^{-1}Q_{\alpha}BQ_{\alpha}^{-1}X\\
&= X^{-1}AX\\ 
&= \left(\begin{array}{cc} iI_{\frac{n}{2}} & 0 \\0&  -iI_{\frac{n}{2}} \end{array}\right).
\end{align*}

Lastly, we see that 
\begin{align*}
Y^TJY &= (Q_{\alpha}^{-1}X)^TJ(Q_{\alpha}^{-1}X)\\
&= X^T((Q_{\alpha}^{-1})^TJQ_{\alpha})X\\
&= X^TJX \\
&= \left(\begin{array}{cc} 0 & X_1 \\ -X_1 & 0 \end{array}\right)\\ 
&= \left(\begin{array}{cc} 0 & -2iU_1 \\ 2iU_1 & 0 \end{array}\right).
\end{align*}

Let $$V = (c_1,...,c_{\frac{n}{2}},d_1,...,d_{\frac{n}{2}}) \in \Gl(n,k).$$ It follows from what we have shown that $B = \frac{\sqrt{\alpha}}{\alpha}V \left(\begin{smallmatrix}0 & I_{\frac{n}{2}} \\ -\alpha I_{\frac{n}{2}} & 0\end{smallmatrix}\right) V^{-1}$ where $V^TJV = \left(\begin{smallmatrix}0 & U_1 \\ -U_1 & 0\end{smallmatrix}\right) = U^TJU$.

Now, let $Q = UV^{-1}$. We will show that $Q^{-1}AQ = B$ and $Q \in \Sp(2n,k)$. This will prove that $\Inn_A$ and $\Inn_B$ are isomorphic over $\Sp(2n,k)$. 

We first show that $Q \in \Sp(2n,k)$. 
\begin{align*}
Q^TJQ &= (UV^{-1})^TJUV^{-1}\\
&= (V^{-1})^T(U^TJU)V^{-1}\\
&= (V^{-1})^T(V^TJV)V^{-1}\\ 
&= J.
\end{align*}

Lastly, we show that $Q^{-1}AQ = B$. 
\begin{align*}
Q^{-1}AQ &= (UV^{-1})^{-1}A(UV^{-1})\\ 
&= VU^{-1}AUV^{-1}\\
&=  \frac{\sqrt{\alpha}}{\alpha}V \left(\begin{array}{cc}0 & I_{\frac{n}{2}} \\ -\alpha I_{\frac{n}{2}} & 0\end{array}\right) V^{-1}\\ 
&= B.
\end{align*}
We have shown what was needed.
\end{proof}

Combining the results from this section, we get the following corollary. 

\begin{cor}
\label{CorType4}
If $\Inn_A$ and $\Inn_B$ are both Type 4 involutions of $\Sp(2n,k)$, then $\Inn_A$ and $\Inn_B$ are isomorphic over $\Sp(2n,k)$ if and only if $A$ and $B$ have entries lying in the same field extension of $k$. That is, $\Sp(2n,k)$ has at most $|k^*/(k^*)^2|-1$ isomorphy classes of Type 4 involutions.
\end{cor}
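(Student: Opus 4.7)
The plan is to derive the corollary by combining the two immediately preceding theorems for the ``if'' direction, and by invoking Theorem \ref{Inn=Id} together with a square-class argument for the ``only if'' direction.

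For the forward implication, suppose $A$ and $B$ both have entries that are $k$-multiples of $\sqrt{\alpha}$ for some common $\alpha \in k^*$ (equivalently, $A, B \in \Sp(2n, k[\sqrt{\alpha}])$ with entries in the same $k$-line $k\sqrt{\alpha}$). Split into two cases: if $\sqrt{-\alpha} \in k$, apply Theorem \ref{type4lemYes} to conclude that $\Inn_A$ and $\Inn_B$ are isomorphic over $\Sp(2n,k)$; if $\sqrt{-\alpha} \notin k$, apply Theorem \ref{type4lemNo} instead. This covers all Type 4 involutions whose representing matrices lie in the same extension.

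For the converse, suppose $\Inn_A$ and $\Inn_B$ are isomorphic over $\Sp(2n,k)$, so there exists $Q \in \Sp(2n,k)$ with $\Inn_{Q^{-1}AQ} = \Inn_B$ on $G = \Sp(2n,k)$. Then $\Inn_{(Q^{-1}AQ)B^{-1}}|_G = \Id$, and Theorem \ref{Inn=Id} forces $Q^{-1}AQ = pB$ for some $p \in \overline{k}$. Squaring both sides and using $A^2 = B^2 = -I$ gives $-I = -p^2 I$, hence $p = \pm 1$ and $Q^{-1}AQ = \pm B$. Now $Q$ has entries in $k$ and every entry of $A$ is a $k$-multiple of $\sqrt{\alpha}$, so every entry of $Q^{-1}AQ$ lies in the $k$-subspace $k\sqrt{\alpha} \subset \overline{k}$. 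Similarly every entry of $\pm B$ lies in $k\sqrt{\beta}$. Since $A \neq 0$, comparing a nonzero entry forces $k\sqrt{\alpha} = k\sqrt{\beta}$, which is exactly the statement that $\alpha$ and $\beta$ represent the same class in $k^*/(k^*)^2$, i.e., $A$ and $B$ have entries in the same field extension $k[\sqrt{\alpha}] = k[\sqrt{\beta}]$ of $k$.

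The counting statement then follows immediately: by definition a Type 4 involution requires the representing matrix $A \notin \Sp(2n,k)$, so the associated square class $[\alpha]$ must be nontrivial in $k^*/(k^*)^2$; together with the equivalence just proved this bounds the number of isomorphy classes by $|k^*/(k^*)^2| - 1$. The only step that requires any real care is the converse direction, and the only potential obstacle is ensuring that the scalar $p$ produced by Theorem \ref{Inn=Id} is actually constrained to $\pm 1$ (rather than an arbitrary element of $\overline{k}^*$); this is settled by squaring and using that both $A$ and $B$ square to $-I$, as above.
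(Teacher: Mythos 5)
Your proposal is correct and, for the ``if'' direction, takes exactly the route the paper intends: the corollary is obtained by combining Theorem \ref{type4lemYes} (the case $\sqrt{-\alpha}\in k$) with Theorem \ref{type4lemNo} (the case $\sqrt{-\alpha}\notin k$). The ``only if'' direction and the counting bound are left implicit in the paper, and your argument for them is sound: Theorem \ref{Inn=Id} gives $Q^{-1}AQ=pB$, squaring with $A^2=B^2=-I$ forces $p=\pm 1$, and comparing a nonzero entry of $Q^{-1}AQ\in k\sqrt{\alpha}\cdot\M(2n,k)$ with the corresponding entry of $\pm B$ shows $\alpha$ and $\beta$ lie in the same square class, which together with the nontriviality of that class (since $A\notin\Sp(2n,k)$) yields the bound $|k^*/(k^*)^2|-1$.
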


\section{Maximal Number of Isomorphy classes}

From the work we have done, it follows that the maximum number of isomorphy classes of $\Sp(2n,k)$ is a function of the number of square classes of $k$ and $n$. We first define the following formulas.

\begin{definit}

Let $C_1(2n,k)$, $C_2(2n,k)$, $C_3(2n,k)$ and $C_4(2n,k)$ be the number of isomorphy classes of involutions of $\Sp(2n,k)$ of types 1, 2, 3, and 4, respectively.

\end{definit}

From our previous work, we have the following:

\begin{cor}

\begin{enumerate}

\item If $n$ is odd, then $C_1(2n,k) = \frac{n-1}{2}.$ If $n$ is even, then $C_1(2n,k) = \frac{n}{2} .$

\item   If $n$ is odd, then $C_2(2n,k) = 0.$ If $n$ is even, then $C_2(2n,k) \le |k^*/(k^*)^2|-1 .$

\item  $C_3(2n,k) = 1 .$

\item  $C_4(2n,k)  \le |k^*/(k^*)^2|-1 .$ 

\end{enumerate}
\end{cor}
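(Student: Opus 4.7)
The plan is to assemble this corollary by invoking the four isomorphy class theorems already proved for each type of involution. The bulk of parts (ii), (iii), and (iv) are essentially direct restatements of earlier results, so the only genuine counting argument is for part (i), which I would handle first.

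For part (i), I would invoke the Type 1 isomorphy theorem: the class of $\Inn_A$ is determined by the unordered pair $\{\dim E(A,1), \dim E(A,-1)\}$. By Lemma \ref{Type1Class} both dimensions $s$ and $t$ are even positive integers summing to $2n$, and we must exclude $s=0$ and $t=0$ (these give $A=\pm I$, hence $\Inn_A = \Id$, not an involution). So the classes are indexed by unordered pairs $\{s,t\}$ of positive even integers with $s+t=2n$. Writing $s=2j$ with $1 \le j \le n-1$ and identifying $\{2j,2n-2j\}$ with $\{2(n-j),2j\}$, a quick count gives $\lfloor n/2 \rfloor$ classes, which is $n/2$ when $n$ is even and $(n-1)/2$ when $n$ is odd, matching the claim.

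For part (ii), if $n$ is odd, then by the lemma preceding \ref{Type2Class} any Type 2 involution would force $\dim E(A,1) = \dim E(A,-1) = n$, but Lemma \ref{Type1Class} applied to $A$ viewed inside $\Sp(2n, k[\sqrt{\alpha}])$ requires both eigenspace dimensions to be even; this is impossible when $n$ is odd, so $C_2(2n,k)=0$. If $n$ is even, the isomorphy theorem for Type 2 identifies classes with the square class $[\alpha] \in k^*/(k^*)^2$ with $[\alpha] \ne [1]$ (we must have $\sqrt{\alpha} \notin k$), giving the bound $|k^*/(k^*)^2|-1$. Parts (iii) and (iv) are immediate: Corollary \ref{CorType3} gives $C_3(2n,k) = 1$, and Corollary \ref{CorType4} gives $C_4(2n,k) \le |k^*/(k^*)^2|-1$ since Type 4 classes are distinguished by the square class of the extension $k[\sqrt{\alpha}]$.

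The only step that is not an immediate citation is the combinatorial count for Type 1, and even there the main subtlety is just remembering to exclude the degenerate pairs $(0,2n)$ and $(2n,0)$ that would correspond to $A = \pm I$. There is no real obstacle since all the structural work has already been done in the preceding sections; this corollary is essentially a summary table.
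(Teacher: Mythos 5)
Your proposal is correct and follows the paper's approach exactly: the paper offers no separate proof of this corollary, instead deriving each item from the per-type corollaries established in the preceding section, which is precisely what you do. The only material you add is the explicit combinatorial count of unordered pairs $\{s,t\}$ of positive even integers with $s+t=2n$ for Type 1 and the parity obstruction for Type 2 when $n$ is odd, both of which are already asserted (without detail) in the earlier corollaries, so your write-up is a slightly more explicit version of the same argument.
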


\section{Explicit Examples}

We have shown that the  number of isomorphy classes of Type 1 and Type 3 involutions depends only on $n$, and not the field $k$. Since Type 2 and Type 4 involutions do not occur when $k$ is algebraically closed, then the previous corollary tells us the number of isomorphy classes in this case. In addition to this example, we will also consider the cases where $k = \mathbb{R}$ and $k = \mathbb{F}_p$.

\subsection{Type 2 Examples}

We first consider the Type 2 case. So, we may assume that $n$ is even. First, let us suppose that $k$ is $\mathbb{R}$ or $\mathbb{F}_q$ where $-1$ is not a square in $\mathbb{F}_q$. Without loss of generality, assume $\alpha = -1$. Let $A_1$ be an $n \times n$ block diagonal matrix where each block is the $2 \times 2$ matrix $i\left(\begin{smallmatrix}0 & 1 \\1 & 0\end{smallmatrix}\right)$. Then, let $A = \left(\begin{smallmatrix}A_1 & 0 \\0 & (A_1^{-1})^T\end{smallmatrix}\right).$ This matrix induces a Type 2 involution on $\Sp(2n,k)$. 

Now, let us suppose that $k = \mathbb{F}_q$ where $-1$ is a square. Let $\alpha \in k^*$ be a non-square. Then, we can choose $a,b \in k$ such that $a^2+b^2 = \frac{1}{\alpha}$. Let $A_1$ be an $n \times n$ block diagonal matrix where each block is the $2 \times 2$ matrix $\sqrt{\alpha}\left(\begin{smallmatrix}a & b \\b & -a\end{smallmatrix}\right)$. Then, let $A = \left(\begin{smallmatrix}A_1 & 0 \\0 & (A_1^{-1})^T\end{smallmatrix}\right).$ This matrix induces a Type 2 involution on $\Sp(2n,k)$. So if $k$ is finite or real, then $\Sp(2n,k)$ has the maximal number of Type 2 isomorphy classes.

\subsection{Type 4 Examples}

Now we consider the Type 4 case. So, $n$ may be even or odd. Let us again begin by supposing that $k$ is $\mathbb{R}$ or $\mathbb{F}_q$ where $-1$ is not a square in $\mathbb{F}_q$. Then, the matrix $\left(\begin{smallmatrix}iI_n & 0 \\0 & -iI_n\end{smallmatrix}\right)$ induces a Type 4 involution, and $\Sp(2n,k)$ has the maximal number of isomorphy classes in this case, regardless of if $n$ is odd or even.

Now, let us suppose that $k = \mathbb{F}_q$ where $-1$ is a square.  Let $\alpha \in k^*$ be a non-square and choose $a,b \in k$ such that $a^2+b^2 =\alpha$. If we let $U = \left(\begin{smallmatrix}cI_n & dI_n \\-dI_n & cI_n\end{smallmatrix}\right)$ and then let 
\begin{align*}
A &= \frac{\sqrt{\alpha}}{\alpha}U\left(\begin{array}{cc}0 & I_n \\-\alpha I_n & 0\end{array}\right)U^{-1}\\
&= \frac{\sqrt{\alpha}}{\alpha^2}\left(\begin{array}{cc}(1-\alpha)cdI_n & (c^2+\alpha d^2)I_n \\-(c^2+\alpha d^2)I_n & -(1-\alpha)cdI_n\end{array}\right).
\end{align*}
$A$ induces a Type 4 involution on $\Sp(2n,k)$. We have shown that if $k$ is finite or real, then $\Sp(2n,k)$ has the maximal number of Type 4 isomorphy classes. Thus, if $k$ is real or finite it has the maximal number of all types of isomorphy classes.

While we have been unable to prove that this is the case for any field $k$, we believe that this is the case That is, we have the following conjecture:

\begin{conj}
\begin{enumerate}

\item If $n$ is odd, then $C_1(2n,k) = \frac{n-1}{2}.$ If $n$ is even, then $C_1(2n,k) = \frac{n}{2} .$

\item   If $n$ is odd, then $C_2(2n,k) = 0.$ If $n$ is even, then $C_2(2n,k) = |k^*/(k^*)^2|-1 .$

\item  $C_3(2n,k) = 1 .$

\item  $C_4(2n,k)  = |k^*/(k^*)^2|-1 .$ 

\end{enumerate}
\end{conj}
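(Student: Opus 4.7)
Parts (1) and (3) of the conjecture restate corollaries already proved in the body of the paper, and parts (2) and (4) include upper bounds $C_2(2n,k), C_4(2n,k) \le |k^*/(k^*)^2|-1$ that are likewise known. The entire content of the conjecture is therefore the \emph{attainment} of these upper bounds for an arbitrary field $k$: for each non-trivial square class $[\alpha] \in k^*/(k^*)^2$ one must produce a Type 2 (resp.\ Type 4) involution whose matrix representative has entries in $k\sqrt{\alpha}$, and then check that different square classes yield non-isomorphic involutions.

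My plan is to give uniform explicit representatives valid over \emph{any} field, bypassing the ad-hoc sums-of-squares construction used in the paper for the real and finite field cases. For each non-square $\alpha \in k^*$, set
$$A_2(\alpha) = \tfrac{1}{\sqrt{\alpha}} \begin{pmatrix} 0 & -\alpha J_n \\ J_n & 0 \end{pmatrix} \quad (n \text{ even}), \qquad A_4(\alpha) = \tfrac{1}{\sqrt{\alpha}} \begin{pmatrix} 0 & \alpha I_n \\ -I_n & 0 \end{pmatrix}.$$
Writing $A_i(\alpha) = \tfrac{1}{\sqrt{\alpha}} Y_i$ with $Y_i \in \Gl(2n,k)$, a block multiplication gives $Y_2^2 = \alpha I_{2n}$, $Y_4^2 = -\alpha I_{2n}$, and $Y_i^T J Y_i = \alpha J$ in both cases. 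Hence $A_2(\alpha)^2 = I$, $A_4(\alpha)^2 = -I$, and $A_i(\alpha) \in \Sp(2n, k[\sqrt{\alpha}]) \setminus \Sp(2n,k)$ with entries that are $k$-multiples of $\sqrt{\alpha}$. For $X \in \Sp(2n,k)$ the scalar factor cancels, so $\Inn_{A_i(\alpha)}(X) = Y_i^{-1} X Y_i$; the identity $Y_i^T J Y_i = \alpha J$ gives $(Y_i^{-1})^T J Y_i^{-1} = \tfrac{1}{\alpha} J$, and a one-line check then yields $(Y_i^{-1} X Y_i)^T J (Y_i^{-1} X Y_i) = J$. Thus each $\Inn_{A_i(\alpha)}$ is a bona-fide involution of $\Sp(2n,k)$ of the stated type.

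To see that distinct square classes produce distinct isomorphy classes, suppose $\Inn_{A_i(\alpha)}$ and $\Inn_{A_i(\beta)}$ are $\Sp(2n,k)$-conjugate by some $Q \in \Sp(2n,k)$. Then $\Inn_{Q^{-1} A_i(\alpha) Q} = \Inn_{A_i(\beta)}$, so Theorem \ref{Inn=Id} forces $Q^{-1} A_i(\alpha) Q = c A_i(\beta)$ for some $c \in \overline{k}$. Squaring and using $A_i(\alpha)^2 = A_i(\beta)^2$ yields $c^2 = 1$, so $c = \pm 1 \in k$. The entries of the left side lie in $k\sqrt{\alpha}$ and those of the right in $\pm k\sqrt{\beta} = k\sqrt{\beta}$, so $k\sqrt{\alpha} = k\sqrt{\beta}$, forcing $[\alpha] = [\beta]$ in $k^*/(k^*)^2$, a contradiction. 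Combined with parts (1), (3) and the upper bounds, this establishes the conjecture.

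The step I expect to be most delicate, and the apparent reason the authors did not push this through in general, is \emph{choosing} the correct representative. The naive analogue $\tfrac{1}{\sqrt{\alpha}}\bigl(\begin{smallmatrix} 0 & I_n \\ \alpha I_n & 0 \end{smallmatrix}\bigr)$ suggested by the canonical form of Lemma \ref{Type2Class} is not symplectic (one computes $Y^T J Y = -\alpha J$, giving $A^T J A = -J$), so one must introduce the inner symplectic form $J_n$ as above, which forces the parity restriction $n$ even in the Type 2 case. Reconciling my $A_2(\alpha)$ with the precise canonical form of Lemma \ref{Type2Class}, that is, producing an $X$ with $X^T J X = \tfrac12 \bigl(\begin{smallmatrix} J & 0 \\ 0 & J/\alpha \end{smallmatrix}\bigr)$, amounts to adjusting my $X = \bigl(\begin{smallmatrix} \alpha I & 0 \\ 0 & J_n \end{smallmatrix}\bigr)$ by an element of the centralizer of $\bigl(\begin{smallmatrix} 0 & I \\ \alpha I & 0 \end{smallmatrix}\bigr)$; this reduces to finding a symplectic similitude of $\Sp(n,k)$ with multiplier $-\alpha$ and is available via the surjectivity of the multiplier map $GSp(n,k) \twoheadrightarrow k^*$. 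Happily the direct entry-comparison argument in the previous paragraph sidesteps the lemma's form constraint entirely, so the conjecture goes through without this technical reconciliation.
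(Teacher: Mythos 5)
The statement you are proving appears in the paper only as a conjecture: the authors say explicitly that they were unable to establish parts (2) and (4) for arbitrary $k$, and the "Explicit Examples" section verifies them only for $k$ algebraically closed, real, or finite, using representatives built from solutions of $a^2+b^2=1/\alpha$ (resp.\ $a^2+b^2=\alpha$), a device that is genuinely field-specific. Your argument is therefore not a rephrasing of anything in the paper but, as far as I can verify, an actual proof of the open parts. The computations behind your representatives check out: with $Y_2=\left(\begin{smallmatrix}0 & -\alpha J_n\\ J_n & 0\end{smallmatrix}\right)$ ($n$ even) and $Y_4=\left(\begin{smallmatrix}0&\alpha I_n\\-I_n&0\end{smallmatrix}\right)$ one gets $Y_2^2=\alpha I$, $Y_4^2=-\alpha I$ and $Y_i^TJY_i=\alpha J$, so $A_i(\alpha)=\frac{1}{\sqrt{\alpha}}Y_i$ lies in $\Sp(2n,k[\sqrt{\alpha}])\setminus\Sp(2n,k)$, has all entries in $k\sqrt{\alpha}$, squares to $\pm I$, is non-scalar, and normalizes $\Sp(2n,k)$; your observation that the naive candidate $\left(\begin{smallmatrix}0&I\\\alpha I&0\end{smallmatrix}\right)$ fails to be symplectic and that the defect must be absorbed into the inner form $J_n$ is precisely the obstruction that the paper's sums-of-squares constructions were working around, and your version removes the dependence on the arithmetic of $k$. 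The separation argument is also sound and is cleaner than routing through the paper's Type 2 and Type 4 isomorphy theorems: Theorem \ref{Inn=Id} gives $Q^{-1}A_i(\alpha)Q=cA_i(\beta)$, squaring gives $c=\pm1$, and comparing any nonzero entry forces $\alpha\beta^{-1}\in(k^*)^2$. Two minor points. First, for the exact count (rather than just a lower bound) you still need the paper's upper-bound results (the Type 2 isomorphy theorem and Corollary \ref{CorType4}), which are proved there for general $k$, so this is legitimate but worth stating. Second, your closing paragraph about reconciling $A_2(\alpha)$ with the canonical form of Lemma \ref{Type2Class} via a symplectic similitude of multiplier $-\alpha$ is the only place you lean on an unproved assertion (surjectivity of the multiplier map); since, as you note, nothing in the proof depends on that reconciliation, you should simply delete it or flag it as an aside.
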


We have classified the involutions for symplectic groups over algebraically closed fields, the real numbers, and for a finite field of characteristic not 2. We also have constructed the tools to classify the involutions of other symplectic groups. In addition to proving (or disproving) the above conjecture, further areas of research in this area to be completed are to classify the $(\theta, k)$-split tori for given involutions $\theta$, classify the $k$-inner elements, and to study the fixed point groups, which would give rise to a symmetric space.

\end{document}